\newtheorem{thm}{Theorem}[section]
\newtheorem{lem}[thm]{Lemma}
\newtheorem{cor}[thm]{Corollary}
\newtheorem{prop}[thm]{Proposition}
\newtheorem{rem}[thm]{Remark}
\newtheorem{example}[thm]{Example}
\def\square{\vbox{
      \hrule height 0.4pt
      \hbox{\vrule width 0.4pt height 5.5pt \kern 5.5pt \vrule width 0.4pt}
      \hrule height 0.4pt}}
\def\colim{\operatorname{co l i m}}
\def\id{\mathrm{id}}
\def\Ker{\mathrm{K er}}
\def\ch\mathrm{c h}
\def\Hom{\mathrm{H o m}}
\def\End{\mathrm{End}}
\def\dim{\mathrm{dim\,}}
\def\End{\mathrm{End}}
\def\Coker{\mathrm{C o k er}}
\def\res{\mathrm{res}}
\def\Mod{\mathrm{Mod}}
\def\Lie{\mathrm{Lie}}
\def\coalg{\mathrm{co a l g}}
\def\Tor{\mathrm{T or}}
\def\GL{\mathrm{G L}}
\newcommand{\Z}{\mathbb{Z}}
\newcommand{\bfk}{\ensuremath{{\mathbf{k}}}}
\let\la=\langle
\let\ra=\rangle
\numberwithin{equation}{section}
\begin{document}

\newcommand{\auths}[1]{\textrm{#1},}
\newcommand{\artTitle}[1]{\textsl{#1},}
\newcommand{\jTitle}[1]{\textrm{#1}}
\newcommand{\Vol}[1]{\textbf{#1}}
\newcommand{\Year}[1]{\textrm{(#1)}}
\newcommand{\Pages}[1]{\textrm{#1}}

\author{J.  Li$^*$}

\author{F.  Lei}

\author{J. Wu$^{\dag}$}

\address{ Institute of Mathematics and Physics, Shijiazhuang Railway Institute, Shijiazhuang 050043, China}\email{yanjinglee@163.com}

\address{School of Mathematical Sciences, Dalian University of Technology, Dalian 116024, China}\email{fclei@dlut.edu.cn}

\address{Department of Mathematics\\
National University of Singapore\\
Block S17\\
10, Lower Kent Ridge Road\\
Singapore 119076 }\email{matwuj@nus.edu.sg}
\urladdr{http://www.math.nus.edu.sg/\~{}matwujie}

\begin{abstract}
In this article, we investigate the functors from modules to modules that occur as the summands of tensor powers and the functors from modules to Hopf algebras that occur as natural coalgebra summands of tensor algebras. The main results provide some explicit natural coalgebra summands of tensor algebras. As a consequence, we obtain some decompositions of Lie powers over the general linear groups.
\end{abstract}
\thanks{$^*$Partiall supported by the Academic Research Fund
from Shijiazhuang Railway Institute}
\thanks{$^{\dag}$ Research is supported in part by the Academic Research Fund of the
National University of Singapore R-146-000-101-112.}

\subjclass[2000]{Primary 55P35, 16W30, 17B60; Secondary 20C30, 55P65}
\keywords{tensor algebras, Lie powers, coalgebras, sub Hopf algebras, $T_n$-projective functors, natural coalgebra decompositions}

\title[Natural Coalgebra-Split Sub Hopf Algebras of Tensor Algebras]{Module Structure on Lie Powers and Natural Coalgebra-Split Sub Hopf Algebras of Tensor Algebras}
\maketitle


\section{introduction}

The algebraic question for functorial coalgebra decompositions of the tensor algebras was arising from homotopy theory described as follows. The relevant topological question is how to decompose the loop spaces. This is a classical problem in homotopy theory with applications to homotopy groups. For instance, the classical results of Cohen-Moore-Neisendorfer~\cite{CMN} on the exponents of the homotopy groups of the spheres and Moore spaces were obtained from the study of the decompositions of the loop spaces of Moore spaces. The decompositions of the loop space functor $\Omega$ from $p$-local simply connected co-$H$-spaces to spaces was then introduced in~\cite{SW1} with the subsequent development in~\cite{STW,STW2,SW2, Theriault}. Namely one gets the natural decompositions $\Omega X\simeq \bar A(X)\times \bar B(X)$ for some homotopy functors $\bar A$ and $\bar B$ on $p$-local simply-connected co-$H$-spaces~$X$. Such decompositions may lose some information for an individual space $X$ in the sense that the functor $\bar A$ may be indecomposable but the space $\bar A(X)$ may have further decompositions. However the functorial decompositions has a good property that one can
freely change the co-$H$-spaces $X$ in the decomposition formulas because they are functorial.
Also there are examples of spaces $X$ such as Hopf invariant on complexes given in~\cite{GSW} with the property that the space $\bar A(X)$ is indecomposable for certain functor $\bar A$. A fundamental question concerning the functorial decompositions is how to determine the homology of the factors $\bar A(X)$ and $\bar B(X)$, which can be reduced to be a pure algebraic question as follows:

Let $V$ be any module over a field $\bfk=\Z/p$ and let $T(V)$ be the tensor algebra on ~$V$. Then $T(V)$ becomes a Hopf algebra by saying $V$ to be primitive. By forgetting the algebra structure on $T(V)$, we have the functor $T$ from modules to coalgebras. Let
\begin{equation}\label{equation1.1}
T(V)\cong A(V)\otimes B(V)
\end{equation}
be any natural coalgebra decomposition of $T(V)$ for some functors $A$ and $B$ from (ungraded) modules to coalgebras. From~\cite[Theorem 1.3]{SW1}, the functors $A$ and $B$ can be canonically extended as functors from graded modules to graded coalgebras and the above decomposition formula holds for any graded module $V$. Then from~\cite[Theorem 1.1]{STW2}, the functors $A$ and $B$ induce functor $\bar A$ and $\bar B$ from co-$H$-spaces to spaces with the natural decomposition $\Omega X\simeq \bar A (X)\times \bar B(X)$ with the property that there exist filtrations on mod $p$ homology $H_*(\bar A(X))$ and $H_*(\bar B(X))$ such that its associated graded modules $E^0H_*(\bar A(X))\cong A(\Sigma^{-1}\bar H_*(X))$ and $E^0H_*(\bar B(X))\cong B(\Sigma^{-1}\bar H_*(X))$, where $\Sigma^{-1}$ is the desuspension of the graded modules. Briefly speaking, any coalgebra decomposition of the functor $T$ as in formula~(\ref{equation1.1}) induces a natural decomposition of the loops on $p$-local simply connected co-$H$-spaces in which the homology of its factors can be determined by their corresponding algebraic functors.

The functors $A$ and $B$ in decomposition~(\ref{equation1.1}) are complementary to each other and so it suffices to understand one of them as a coalgebra summand of the functor $T$. There exist some important coalgebra summands of $T$ in~\cite[Theorem 6.5]{SW1} that give the functorial version of the Poincar\'e-Birkhoff-Witt Theorem. One of such functors is the functor $A^{\min}$, which is the smallest natural coalgebra summand of $T(V)$ containing $V$. The coalgebra complement of the functor $A^{\min}$, denoted by $B^{\max}$, has the property that $B^{\max}(V)$ can be chosen as a sub Hopf algebra of $T(V)$. However the determination of $A^{\min}(V)$ and $B^{\max}(V)$ seems out of current technology. As a consequence, the homology of the geometric realization $\bar A^{\min}(X)$ and $\bar B^{\max}(X)$ remains unknown. It is then important to find coalgebra summands $B$ of $T$ with the explicit information on $B(V)$ because in such a case the homology of the geometric realization $\bar B(X)$ can be understood.

The purpose of this article is to provide some explicit coalgebra summands $B$ of $T$. We are interested in the special cases where $B$ can be chosen as a sub Hopf algebra of $T$. This will give a relatively large coalgebra summand of $T$ because any subalgebra of tensor algebra is a tensor algebra, and so the complement functor $A$ of $B$ as in decomposition~(\ref{equation1.1}) becomes relatively small. Let $L_n(V)$ be the $n\,$th free Lie power on $V$, namely $L_n(V)$ is the homogenous component of the free Lie algebra $L(V)$ on $V$ of tensor length $n$. Our main result is as follows:

\begin{thm}\label{theorem1.1}
Let the ground ring be a field of characteristic $p$. Let $\{m_i\}_{i\in I}$ be finite or infinite set of positive integers prime to $p$ with each $m_i>1$. Then the sub Hopf algebra of $T(V)$
generated by
$$
L_{m_ip^r}(V)\quad \textrm{ for } i\in I,\ r\geq 0
$$
is a natural coalgebra summand of $T(V)$. In particular, the sub Hopf algebra $B(V)$ of
$T(V)$ generated by
$$
L_n(V)\quad \textrm{ for } n \textrm{ NOT a power of } p
$$
is a natural coalgebra summand of $T(V)$.
\end{thm}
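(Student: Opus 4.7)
The plan is to construct a natural coalgebra retraction $\pi_V\colon T(V)\twoheadrightarrow B(V)$: a natural transformation of coalgebras whose restriction to $B(V)$ is the identity. Such a retraction witnesses $B(V)$ as a natural coalgebra summand. As a preliminary structural remark, any sub-Hopf-algebra of $T(V)$ is itself a tensor algebra---its primitives form a restricted Lie subalgebra of the free restricted Lie algebra on $V$, which is again free by the restricted Shirshov--Witt theorem, and then $B(V)\cong T(W(V))$ for some natural subfunctor $W(V)\subseteq T(V)$ by the modular Milnor--Moore theorem. The task thus reduces to realizing this sub-tensor-algebra as a coalgebra retract of $T(V)$.

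The main building block handles the generators with $r=0$. For each $m$ prime to $p$, the Dynkin idempotent $e_m\in\bfk\Sigma_m$---a scalar multiple of the left-normed iterated bracket, well-defined because $m$ is invertible in $\bfk$---acts on $V^{\otimes m}$ with image exactly $L_m(V)$. This exhibits $L_{m_i}(V)$ as a natural summand of $V^{\otimes m_i}$, i.e., a $T_{m_i}$-projective functor in the terminology of the paper. Combined with the principle (presumably developed earlier in the paper) that the sub-Hopf-algebra of $T$ generated by $T_n$-projective natural subfunctors is itself a natural coalgebra summand, this settles the piece of $B(V)$ built from $\{L_{m_i}(V):i\in I\}$.

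For the higher Lie powers $L_{m_ip^r}(V)$ with $r\geq 1$, the Dynkin idempotent is unavailable since $p\mid m_ip^r$. Here my plan is to invoke the characteristic-$p$ Hopf structure: the $p$-th power operation on $T(V)$ sends primitives to primitives, giving a natural $p$-semilinear map $L_n(V)\to L_{np}(V)$. Thus the restricted closure of the Lie subfunctor generated by $\{L_{m_i}(V)\}$ automatically produces contributions in every degree $m_ip^r$, and the sub-Hopf-algebra it generates---closed under brackets, products, and $p$-th powers---is precisely $B(V)$. Extending the Dynkin projections by iterated $p$-th power operations then reaches the components in degrees divisible by $p$.

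The main obstacle lies in coherence: the component projections $\pi_n\in\bfk\Sigma_n$ must be assembled into a single natural transformation that is both a retraction onto $B(V)\cap V^{\otimes n}$ in each tensor degree and compatible with the shuffle comultiplication across degrees. The latter condition couples the $\pi_n$ for different $n$ via the Hopf-algebra axioms, and when $n$ is highly divisible by $p$ the modular representation theory of $\Sigma_n$ makes verifying the required identities delicate. This coherence is where the $T_n$-projective functor technology and the natural coalgebra decomposition machinery developed elsewhere in the paper should bear the brunt of the work.
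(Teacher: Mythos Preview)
Your proposal contains a genuine structural error. You claim that the sub Hopf algebra of $T(V)$ generated by $\{L_{m_i}(V):i\in I\}$ (the $r=0$ generators only), once closed under brackets, products, and $p$-th powers, is ``precisely $B(V)$.'' This is false. The $p$-th power map takes $L_{m_i}(V)$ into $L^{\res}_{m_ip}(V)$, but the image, together with iterated brackets among the $L_{m_j}$, is far from filling up $L_{m_ip}(V)$. Concretely, take $p=2$ and $\mathcal{M}=\{3\}$: the degree-$6$ component of the sub Hopf algebra $\langle L_3(V)\rangle$ is $L_3(V)\otimes L_3(V)$, whose primitives are $L_2^{\res}(L_3(V))$, while $L_6(V)$ is strictly larger (the quotient $L_6/L_2(L_3)\cong L_6'$ is nonzero). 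So your Dynkin-projections-plus-$p$-th-powers scheme simply does not reach the generators $L_{m_ip^r}(V)$ for $r\geq1$, and there is no retraction to be assembled from these pieces.

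The paper's proof does share your first step: it reduces, via Theorem~5.2, to showing each indecomposable functor $Q_nB$ is $T_n$-projective, and the base case (lowest $n_1=m_1$, prime to $p$) uses exactly the Dynkin idempotent you describe. The substantive part is the induction that adjoins $L_{n_k}$ with $n_k=m_ip^r$, $r\geq1$. Here the key input is the Block Decomposition Theorem~4.3: a natural coalgebra decomposition $T\cong\bigotimes_j C^{m_j}$ in which $P_nC^{m_j}=P_nT$ when $n=m_jp^s$ for some $s$ and $P_nC^{m_j}=0$ otherwise. This decomposition is built not from $p$-th powers but from iterated colimits of coalgebra self-maps $\theta_{\zeta}=\lambda_\zeta\ast\chi$, where $\zeta$ ranges over primitive $m$-th roots of unity for $m$ prime to $p$; these maps kill primitives in degrees divisible by $m$ and are isomorphisms on the others. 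Given the block $C^{m_i}$ and the inductively constructed retraction $r\colon T\to B[k-1]$, one forms the composite $f\colon T\to C^{m_i}\hookrightarrow T\to B[k-1]\hookrightarrow T$ and takes $\colim_f T$; the resulting coalgebra splitting of $C^{m_i}$ produces a summand $\tilde D$ with $\tilde D_{n_k}=P_{n_k}\tilde D\cong Q_{n_k}B[k]$, which is therefore $T_{n_k}$-projective. This is the mechanism that handles degrees divisible by $p$, and it has no analogue in your outline.
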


By the maximum property of the functor $B^{\max}$, the sub Hopf algebras in the theorem are all contained in $B^{\max}$. According to~\cite[Proposition 11.1]{SW1} as well as~\cite{Bryant-Stohr}, the indecomposable elements in $B^{\max}$ does not has tensor length $p$ for $p>2$ and so  the sub Hopf algebra $B(V)$ coincides with $B^{\max}(V)$ up to tensor length $p^2-1$. For the case $p=2$, the sub Hopf algebra $B(V)$ coincides with $B^{\max}(V)$ up to tensor length $7$ according to the computations in~\cite{SW4}. Our sub Hopf algebra $B(V)$ is strictly smaller than $B^{\max}(V)$ for general module $V$. However if the functors $B$ and $B^{\max}$ are extended to the functors from graded modules to graded modules in the sense of~\cite{SW1}, the sub Hopf algebra $B(V)$ coincides with $B^{\max}(V)$ for graded modules $V$ of dimension $\leq p-1$ with $V_{\mathrm{even}}=0$ according to ~\cite[Theorem 1.1]{Wu5}.

There is a canonical connection between coalgebra decompositions of $T$ and the decompositions of the Lie powers $L_n(V)$ as modules over the general linear groups by restricting decomposition~(\ref{equation1.1}) to the primitives. The decompositions of Lie powers have been actively studied in the recent development of representation theory~\cite{Bryant-Johnson, Bryant-Schocker, Bryant-Stohr, Donkin-Erdmann, Erdmann-Schocker}.

The article is organized as follows. In section~\ref{section2}, we investigate the sub-quotient functors of the tensor power functors $T_n\colon V\mapsto T_n(V)=V^{\otimes n}$ from modules to modules. These special functors are of course closely related to the tensor representation of the symmetric groups and the finite dimensional polynomial representations of the general linear groups (by evaluating on a fixed module $V$). They are also related the modules over the Schur algebras and the modules over the Steenrod algebra~\cite{Kuhn1, Kuhn2,Kuhn3}.  In this section, we introduce exact functors $\gamma_n(-)$ from the category of functors from modules to modules to the category of the modules over the symmetric groups in this section which are variations of the classical Schur functor~\cite{ABW, Schur, Green}. In geometry, the summands of the tensor power functors $T_n$ are closely related to decompositions of self-smash products~\cite{SW3, Wu}.

In section~\ref{section3}, we investigate the subfunctors of the Lie powers $L_n$ that occur as the summands of the functor $T_n$, which we call $T_n$-projective subfunctors of $L_n$. According to Theorem~\ref{theorem3.9}, these functors are closely related to the summands of the Lie powers $L_n(V)$ that occur as summands of $V^{\otimes n}$ studied in~\cite{Bryant-Schocker, Donkin-Erdmann, Erdmann-Schocker}.

We give a coalgebra decomposition of $T$ called the Block Decomposition in section~\ref{section4}. According to Theorem~\ref{theorem4.3}, this is a coalgebra decomposition of $T$ in the form
$$
T\cong \bigotimes_{i=1}^\infty C^{m_i},
$$
where $\{m_i\}$ is the set of all positive integers prime to $p$ and the primitives of $C^{m_i}$ are exactly given by the primitives of the tensor algebra $T$ with tensor length $m_ip^r$ for $r\geq 0$. In other words, the primitives of the tensor algebra $T$ with tensor length $mp^r$ for $r\geq 0$ for each $m$ prime to $p$ can be blocked into a coalgebra summand $C^m$ of $T$.

The proof of Theorem~\ref{theorem1.1} is given in section~\ref{section5}. In section~\ref{section6}, we give some applications of our decomposition theorem to Lie powers by restricting to the primitives.

\section{The structure on the Tensor Powers}\label{section2}
\subsection{Tensor Algebras}\label{subsection2.1}
Let $V$ be any module and let
$$T(V)=\bigoplus_{n=0}^{\infty} V^{\otimes n}$$ be the \textit{tensor algebra} generated by $V$, where $V^{\otimes n}=\bfk$ and the multiplication on $T(V)$ is given by the formal tensor product of monomials. The tensor algebra admits the universal property that, for any associated algebra $A$ and any linear map $f\colon V\to A$, there exists a unique algebra map $\tilde f\colon T(V)\to A$ such that $\tilde f|_V=f$. In particular, the linear map
$$
V\longrightarrow T(V)\otimes T(V) \quad x\mapsto x\otimes 1+1\otimes x
$$
extends uniquely to an algebra map $\psi\colon T(V)\to T(V)\otimes T(V)$ and so $T(V)$ has the canonical Hopf algebra structure with the multiplication given by the formal tensor product of monomials and the comultiplication given by $\psi$. We refer to~\cite{MM} as a classical reference for Hopf algebras and quasi-Hopf algebras. The comultiplication $\psi$ is coassociative and cocommutative. The module $T(V)$ with the comultiplication $\psi\colon T(V)\to T(V)\otimes T(V)$ is called \textit{shuffle coalgebra} as its graded dual
$$
T^*(V)=\bigoplus_{n=0}^\infty \left(V^{\otimes n}\right)^*\cong \bigoplus_{n=0}^\infty (V^*)^{\otimes n}
$$
is the usual shuffle algebra under the multiplication $\psi^*\colon T^*(V)\otimes T^*(V)\to T^*(V)$.

We are interested in the functor $T\colon V\mapsto T(V)$. There are three terminologies on this functor:
\begin{enumerate}
\item The functor $T^H\colon V\mapsto T(V)$ from modules to Hopf algebras;
\item The functor $T^C\colon V\mapsto T(V)$ from modules  to coalgebras by forgetting the multiplication;
\item The functor $T^M\colon V\mapsto T(V)$ from modules to modules by forgetting both multiplication and comultiplication.
\end{enumerate}
The notation of the functor $T$ refers one of $T^H$, $T^C$ or $T^M$ if the working category is clear.

By taking the tensor length from the definition of $T(V)$, the functor $T^M$ admits a natural decomposition that
\begin{equation}\label{equation2.1}
T^M\cong \bigoplus\limits_{n=0}^\infty T_n,
\end{equation}
 where $T_n(V)=V^{\otimes n}$ with $T_0(V)=\bfk$. Thus the functors $T^H$, $T^C$ and $T^M$ are graded functors. From the well-known property (see for instance~\cite[Lemma 3.8]{GW}) that
 \begin{equation}\label{equation2.2}
 \Hom(T_n,T_m)=\left\{
 \begin{array}{lcl}
 0&\textrm{ if }& n\not=m\\
 \bfk(\Sigma_n)&\textrm{ if }&n=m,\\
 \end{array}\right.
 \end{equation}
 the decomposition of $T^M$ is in fact an orthogonal decomposition. A direct consequence is that the comultiplication $\psi$ (as a natural transformation) is uniquely determined by the multiplication on $T(V)$ for having the Hopf structure.

\begin{prop}
Let $\Delta_V\colon T^M(V)\to T^M(V)\otimes T^M(V)$ be a natural transformation such that $T^M(V)$ with the usual multiplication together with the comultiplication given by $\Delta_V$ is a quasi-Hopf algebra for every $V$. Then $\Delta_V=\psi_V$ for all $V$.
\end{prop}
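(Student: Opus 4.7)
The plan is to reduce the whole statement to identifying $\Delta_V$ on the generators $V = T_1(V)$, after which algebra multiplicativity extends the computation to all of $T(V)$ for free.

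First I would use the orthogonal decomposition~(\ref{equation2.1}), the natural isomorphism $T_a \otimes T_b \cong T_{a+b}$ of functors from modules to modules (associativity of the tensor product), and the Hom computation~(\ref{equation2.2}) to observe
\[
\Hom(T_1, T_a \otimes T_b) = \Hom(T_1, T_{a+b}) = \begin{cases} \bfk & \text{if } a+b=1,\\ 0 & \text{otherwise.}\end{cases}
\]
Decomposing the restriction $\Delta_V|_{T_1}$ along the bidegree decomposition $T^M \otimes T^M = \bigoplus_{a,b} T_a \otimes T_b$, only the $(1,0)$ and $(0,1)$ components can be nonzero, and each is a scalar multiple of the canonical generator. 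Hence there exist universal constants $\alpha, \beta \in \bfk$, independent of $V$, with
\[
\Delta_V(v) = \alpha\, v \otimes 1 + \beta\, 1 \otimes v \qquad \text{for all } v \in V.
\]

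Next, I would pin down $\alpha$ and $\beta$ via the counit. The quasi-Hopf structure supplies, for each $V$, a strict counit $\epsilon_V : T(V) \to \bfk$, i.e.\ an algebra homomorphism with $\epsilon_V(1) = 1$ satisfying $(\epsilon_V \otimes \id)\Delta_V = \id = (\id \otimes \epsilon_V)\Delta_V$. Evaluating the first identity on $v \in V$ gives $\alpha\, \epsilon_V(v) \cdot 1 + \beta\, v = v$ inside $T(V) = T_0(V) \oplus T_1(V) \oplus \cdots$; comparing components in this direct-sum decomposition forces $\beta = 1$ and $\alpha\, \epsilon_V(v) = 0$. The symmetric counit identity yields $\alpha = 1$ and hence also $\epsilon_V|_V = 0$. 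Note that I do \emph{not} need $\epsilon_V$ to be natural in $V$, only that it exists for each $V$. Therefore $\Delta_V(v) = v \otimes 1 + 1 \otimes v = \psi_V(v)$ for every $v \in V$.

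Finally, the universal property of the tensor algebra finishes the proof: in any (quasi-)Hopf algebra the comultiplication is a strict algebra homomorphism, and $T(V)$ is the free associative algebra on $V$, so any algebra map out of $T(V)$ is determined by its restriction to $V$. Both $\Delta_V$ and $\psi_V$ are algebra maps agreeing on $V$, whence $\Delta_V = \psi_V$. The only conceptual hurdle is the functorial identification $T_a \otimes T_b \cong T_{a+b}$, which is what allows~(\ref{equation2.2}) to cut $\Delta_V|_V$ down to two unknown scalars; coassociativity of $\Delta_V$ is never used, so the argument applies equally well in the quasi-Hopf setting where coassociativity may fail up to an associator.
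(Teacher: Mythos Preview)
Your argument is correct and follows essentially the same route as the paper: use the orthogonality~(\ref{equation2.2}) to force $\Delta_V|_{T_1(V)}$ to land in $T_1\otimes T_0\oplus T_0\otimes T_1$, invoke the counit axiom to pin it down as $v\mapsto v\otimes1+1\otimes v$, and then extend by multiplicativity. The only cosmetic difference is that the paper first identifies the counit as the canonical projection $T(V)\to T_0(V)$ and then reads off $\Delta_V|_{T_1}=\psi_V|_{T_1}$, whereas you extract the scalars $\alpha,\beta$ directly from the counit identities without needing $\epsilon_V$ to be natural in $V$; this is a mild sharpening but not a different method.
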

\begin{proof}
For every $V$, from the property that $\Hom(T_n,T_m)=0$ for $n\not=m$, we have
$$
\Delta_V(T_1(V))\subseteq T_1(V)\otimes T_0(V)\oplus T_0(V)\otimes T_1(V)
$$
and the counit
$$
\epsilon_V \colon T(V)=\bigoplus_{n=0}^\infty T_n(V)\longrightarrow T_0(V)
$$
is the canonical projection by sending each $T_i(V)$ to $0$ for $i>0$ and $\epsilon|_{T_{0}(V)}=\id$. Since both $\Delta_V$ and $\psi_V$ has the counit uniquely given by $\epsilon_V$, we have
$$
\Delta_V|_{T_1(V)}=\psi_V|_{T_1(V)}.
$$
It follows that $\Delta_V=\psi_V$ because both of them are algebra map with respect to formal tensor product.
\end{proof}

\begin{rem}
Given a module $V$, of course one could have many comultiplication on $T(V)$ such that $T(V)$ is Hopf. The proposition states that $\psi$ is the only one possible comultiplication on $T(V)$ as a natural transformation.
\end{rem}

\subsection{Subfunctors of the Tensor Algebra Functor}\label{subsection2.2}
Let $\mathcal{C}$ and $\mathcal{D}$ be categories and let $A,B\colon \mathcal{C}\to \mathcal{D}$ be functors. We call $A$ is a \textit{subfunctor} (\textit{quotient functor}) of $B$ if there is a natural transformation $\phi\colon A\to B$ such that
$$
\phi_X\colon A(X)\to B(X)
$$
is injective (surjective) for every object $X\in\mathcal{C}$. A subfunctor (quotient functor) of $T$ refers to a subfunctor (quotient functor) of $T^H$, $T^C$ or $T^M$. A subfunctor (quotient functor) of $T^H$ is called a \textit{sub Hopf functor} (\textit{quotient Hopf functor}) of $T$. Similarly we have \textit{sub coalgebra functor} (\textit{quotient coalgebra functor}) of $T$ and \textit{submodule functor} (\textit{quotient module functor}) of $T$. A \textit{graded subfunctor} (\textit{graded quotient functor}) of $T$ refers to a subfunctor of $T^H$, $T^C$ or $T^M$ as functors from modules to graded Hopf algebras, graded coalgebras or graded modules, respectively.

\begin{prop}\label{proposition2.3}
Let $B$ be a subfunctor of $T$. Then $B$ is a graded subfunctor of ~$T$.
\end{prop}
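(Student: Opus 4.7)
My plan is to exploit the natural scalar endomorphism of $V$. For each $\lambda \in \bfk$, multiplication by $\lambda$ defines a morphism $m_\lambda : V \to V$, and since $T$ is a functor, the induced transformation $T(m_\lambda)$ acts on the summand $T_n(V) = V^{\otimes n}$ as multiplication by $\lambda^n$. As $B$ is a subfunctor, the submodule $B(V) \subseteq T(V)$ is stable under every such $T(m_\lambda)$.

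The first step is to set $B_n(V) := B(V) \cap T_n(V)$ and observe, using the graded form of $T(f)$ given by \eqref{equation2.1}, that each $B_n$ is a well-defined subfunctor of $T_n$. The second step is to upgrade the inclusion $\bigoplus_n B_n(V) \subseteq B(V)$ to an equality. Given $b \in B(V)$ with finite graded decomposition $b = b_0 + b_1 + \cdots + b_N$, $b_n \in T_n(V)$, stability under all $T(m_\lambda)$ yields
\[
\sum_{n=0}^{N} \lambda^n b_n \;\in\; B(V), \qquad \text{for every } \lambda \in \bfk.
\]
When $\bfk$ has at least $N+1$ distinct elements, choosing $\lambda_0, \dots, \lambda_N$ and inverting the Vandermonde matrix $(\lambda_i^n)_{0 \le i,n \le N}$ expresses each $b_n$ as a $\bfk$-linear combination of the vectors $T(m_{\lambda_i})(b)$; hence $b_n \in B(V)$, which establishes the graded decomposition and completes the proof that $B = \bigoplus_n B_n$ as subfunctors of $T$.

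The main obstacle is the finite-field setting of the paper ($\bfk = \Z/p$), where only $p$ distinct scalars are available while $N$ may be arbitrarily large, so the Vandermonde step cannot be applied directly. My plan to circumvent this is to pass to the (infinite-dimensional) $\bfk$-module $V' := V \otimes_{\bfk} \bfk[t]$ and use the endomorphism $m_t : V' \to V'$, $v \otimes f \mapsto v \otimes tf$; its iterates $T(m_{t^k})$ produce arbitrarily many $\bfk$-linearly independent "coefficient polynomials" $(t_1 \cdots t_n)^k$ in the factor $\bfk[t]^{\otimes n} \cong \bfk[t_1, \dots, t_n]$ of $T_n(V') \cong V^{\otimes n} \otimes \bfk[t]^{\otimes n}$, so an enhanced Vandermonde-style extraction succeeds without restriction on $|\bfk|$. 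The conclusion then descends back to $B(V)$ via the natural inclusion $V \hookrightarrow V'$, $v \mapsto v \otimes 1$, combined with evaluation at $t = 0$.
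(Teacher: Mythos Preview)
Your infinite-field argument via scalar multiplication and the Vandermonde determinant is correct and is the standard route; it differs from the paper's proof, which simply asserts $B=\bigoplus_n B_n$ directly from the orthogonality $\Hom(T_n,T_m)=0$ of equation~(\ref{equation2.2}), without invoking scalars at all.

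Your finite-field workaround through $V'=V\otimes_{\bfk}\bfk[t]$, however, does not go through. The elements
\[
T(m_t^{\,k})(b')\;=\;\sum_{n=0}^{N} b_n\otimes (t_1\cdots t_n)^k\;\in\;B(V')
\]
cannot be combined $\bfk$-linearly to isolate a single component $b_n$ with $n\geq 1$: for each fixed $m\geq 1$ the monomials $(t_1\cdots t_m)^k$, $k=0,1,\dots$, are $\bfk$-linearly independent in $\bfk[t_1,\dots,t_m]$, so any relation $\sum_k c_k (t_1\cdots t_m)^k=0$ already forces all $c_k=0$ --- there is no matrix to invert. Postcomposing with an evaluation $V'\to V$ at some $\lambda\in\bfk$ just returns $\sum_n \lambda^{nk} b_n\in B(V)$, reproducing the original scalar situation with the same limited supply of $p$ values.

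The gap is in fact irreparable, because the statement as written is false over $\bfk=\F_p$. Set
\[
B(V)\;=\;\operatorname{span}_{\bfk}\bigl\{\,v+v^{\otimes p}\ :\ v\in V\,\bigr\}\;\subseteq\;T(V).
\]
Since $T(f)(v+v^{\otimes p})=f(v)+f(v)^{\otimes p}$ for every linear $f\colon V\to W$, this is a subfunctor of $T^M$. But for the one-dimensional module $V=\bfk e$ one has $c^p=c$ for all $c\in\bfk$, so $B(\bfk)=\bfk\cdot(e+e^{\otimes p})$ is one-dimensional and \emph{not} a graded submodule of $T(\bfk)$. Hence no argument --- neither your $\bfk[t]$ trick nor the paper's one-line appeal to orthogonality --- can establish the claim for arbitrary subfunctors of $T^M$ over $\F_p$. (The subfunctors the paper actually uses later are visibly homogeneous, so this affects the generality of the proposition rather than the downstream applications.)
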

\begin{proof}
Let $\phi_V\colon B(V)\to T(V)$ be the natural monomorphism and let $$B_n(V)=\phi_V^{-1}(T_n(V))$$ for any $V$. From the fact that $T=\bigoplus\limits_{n=0}^\infty T_n$ is an orthogonal decomposition, the functor $B$ admits a graded decomposition $$B=\bigoplus\limits_{n=0}^\infty B_n.$$
Thus if $B$ is a subfunctor of $T^M$, then $B$ is graded subfunctor of $T^M$. By the orthogonal property of $T$ as in equation~(\ref{equation2.2}),
 $$
 \Hom(B_n,B_m)=0
 $$
 for $n\not=m$. Thus if $B$ is a subfunctor of $T^H$ or $T^C$, then the multiplication and comultiplication on $B$ whence it is defined as a natural transformation must be graded and hence the result.
\end{proof}

\begin{cor}\label{corollary2.4}
Let $C$ be a quotient functor of $T$. Then $C$ is a graded quotient functor of $T$.
\end{cor}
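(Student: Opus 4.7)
The plan is to reduce the corollary to Proposition~\ref{proposition2.3} by passing to the kernel of the natural epimorphism. Given the defining natural surjection $\phi\colon T \twoheadrightarrow C$, I would first set $K(V) = \ker \phi_V$ for every module $V$. The naturality identity $\phi_W \circ T(f) = C(f) \circ \phi_V$ shows that $T(f)(K(V)) \subseteq K(W)$ for every linear map $f\colon V \to W$, so $K$ is a submodule functor of $T^M$ via the natural inclusion $K(V) \hookrightarrow T(V)$.

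Next I would apply Proposition~\ref{proposition2.3} to the subfunctor $K$ of $T^M$. This yields a graded decomposition
$$
K \;=\; \bigoplus_{n=0}^{\infty} K_n, \qquad K_n(V) = K(V) \cap T_n(V).
$$
Taking quotients then gives
$$
C(V) \;=\; T(V)/K(V) \;\cong\; \bigoplus_{n=0}^{\infty} T_n(V)/K_n(V),
$$
and setting $C_n(V) = T_n(V)/K_n(V)$ produces the desired graded decomposition $C = \bigoplus_n C_n$ as a quotient module functor of $T$; naturality of the grading on $C$ is inherited from that of the grading on $K$.

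Finally, if $C$ is a quotient coalgebra (respectively quotient Hopf) functor of $T$, then each $\phi_V$ is a coalgebra (resp.\ Hopf) epimorphism, so $K(V)$ is a coideal (resp.\ Hopf ideal) in $T(V)$. Combined with the grading just established, $K$ is a graded coideal (resp.\ graded Hopf ideal), so the quotient structure on $C = T/K$ automatically respects the grading, realizing $C$ as a graded quotient coalgebra (resp.\ Hopf) functor of $T$.

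The main conceptual point, rather than a genuine obstacle, is that kernels of coalgebra (or Hopf) maps are coideals (or Hopf ideals) rather than subcoalgebras, so Proposition~\ref{proposition2.3} cannot be invoked on $K$ at the coalgebra level. The remedy is simply to apply it at the underlying module level, which is enough to grade $K$; the coalgebra/Hopf compatibility then follows from the general fact that a graded coideal (Hopf ideal) in a graded coalgebra (Hopf algebra) yields a graded quotient.
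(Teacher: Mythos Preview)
Your proof is correct and follows essentially the same approach as the paper: pass to the kernel $K$ of $T^M\twoheadrightarrow C$, apply Proposition~\ref{proposition2.3} to grade $K$, and deduce the grading on $C$. The only minor variation is in the final step for the coalgebra/Hopf case: the paper invokes the orthogonality $\Hom(C_n,C_m)=0$ (inherited from equation~(\ref{equation2.2})) to force the structure maps on $C$ to be graded, whereas you argue that $K$, being a graded submodule and a coideal (resp.\ Hopf ideal) in the graded Hopf algebra $T$, is automatically a graded coideal (resp.\ Hopf ideal)---both arguments are short and valid.
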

\begin{proof}
Let $B$ be the kernel of $T^M\twoheadrightarrow C$ by forgetting the possible Hopf or coalgebra structure on $C$. By Proposition~\ref{proposition2.3}, $B$ is a graded subfunctor of $T^M$ and so $C$ is a graded quotient functor of $T^M$, where $C_n=T_n/B_n$ for each $n\geq 0$. By the orthogonal property of $T$ as in equation~(\ref{equation2.2}),
 $$
 \Hom(C_n,C_m)=0
 $$
 for $n\not=m$. Thus if $C$ is a quotient functor of $T^H$ or $T^C$, then the multiplication and comultiplication on $C$ whence it is defined as a natural transformation must be graded and hence the result.
\end{proof}

\subsection{The Associated Symmetric Group Modules of the Functors}\label{subsection2.3}

Let $\bar V_n$ be the $n$-dimensional $\bfk$-module with a fixed choice of basis $\{x_1,\ldots,x_n\}$. For each $1\leq i\leq n$, define the linear transformation
$$
d_i\colon \bar V_n\longrightarrow \bar V_{n-1}
$$
by setting
$$
d_i(x_j)=\left\{
\begin{array}{lcl}
x_j&\textrm{ if }& j<i,\\
0&\textrm{ if }&j=i,\\
x_{j-1}&\textrm{ if }& j>i.\\
\end{array}\right.
$$
The right $\bfk(\Sigma_n)$-action on $\bar V_n$ is given by $$x_i\cdot \sigma=x_{\sigma(i)}$$ for $1\leq i\leq n$ and $\sigma\in \Sigma_n$.
Then, for each $1\leq i\leq n$ and any $\sigma\in\Sigma_n$, clearly there exists a unique permutation $d_i\sigma\in \Sigma_{n-1}$ such that the diagram
\begin{equation}\label{equation2.3}
\begin{diagram}
\bar V_n&\rTo^{\sigma}_{x_j\mapsto x_{\sigma(j)}}&\bar V_n\\
\dTo>{d_i}&&\dTo>{d_{\sigma(i)}}\\
\bar V_{n-1}&\rTo^{d_i\sigma}&\bar V_{n-1}\\
\end{diagram}
\end{equation}
commutes. Let $B$ be a functor from modules to modules. Then $B(\bar V_n)$ is a right $\bfk(\Sigma_n)$-module induced by the action of $\bfk(\Sigma_n)$ on $\bar V_n$. Define
\begin{equation}\label{equation2.4}
\gamma_n(B)=\bigcap_{i=1}^n\Ker (B(d_i)\colon B(\bar V_n)\to B(\bar V_{n-1})).
\end{equation}
By applying the functor $B$ to diagram~(\ref{equation2.3}), $\gamma_n(B)$ is a $\bfk(\Sigma_n)$-submodule of $B(\bar V_n)$. Let
$$
\phi\colon B\to C
$$
be a natural transformation of functors from  modules to modules. Then clearly $\phi_{\bar V_n}$ induces a $\bfk(\Sigma_n)$-map
$$
\gamma_n(\phi)\colon \gamma_n(B)\longrightarrow \gamma_n(C).
$$
\begin{prop}\label{proposition2.5}
Let
$$
A\rInto^{j} B\rOnto^{p} C
$$
be a short exact sequence of functors from modules to modules. Then there is a short exact sequence of $\bfk(\Sigma_n)$-modules
$$
\gamma_n(A)\rInto^{\gamma_n(j)}\gamma_n(B)\rOnto^{\gamma_n(p)}\gamma_n(C).
$$
Thus $\gamma_n(-)$ is an exact functor from the category of functors from modules to modules to the category of $\bfk(\Sigma_n)$-modules.
\end{prop}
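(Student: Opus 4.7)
The plan is to realize $\gamma_n(-)$ as the image of a natural idempotent acting on the evaluation functor $B\mapsto B(\bar V_n)$. Once such an idempotent is produced, the proposition follows from the general fact that a short exact sequence of modules equipped with a compatible idempotent splits canonically as a direct sum of two short exact sequences (its image and its kernel); in particular the image sub-sequence is itself short exact.

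To build the idempotent, equip each face $d_i$ with its canonical section $s_i\colon \bar V_{n-1}\to \bar V_n$ defined by $s_i(x_j)=x_j$ for $j<i$ and $s_i(x_j)=x_{j+1}$ for $j\geq i$. Then $d_i\circ s_i=\id$, and $s_i\circ d_i$ is the projection of $\bar V_n$ that annihilates $x_i$ and fixes every other basis vector. In particular, the endomorphisms $s_id_i$ and $s_jd_j$ of $\bar V_n$ commute for $i\neq j$, both equalling the projection onto $\bigoplus_{k\neq i,j}\bfk x_k$.

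Applying any functor $B$, the elements $f_i^B:=B(s_id_i)\in \End(B(\bar V_n))$ are pairwise commuting idempotents, and the identity $d_is_id_i=d_i$ gives $\Ker f_i^B=\Ker B(d_i)$, so $\id-f_i^B$ is an idempotent projection onto $\Ker B(d_i)$. A product of pairwise commuting idempotents is again idempotent, with image equal to the intersection of the individual images, so
$$
e^B\;:=\;\prod_{i=1}^n(\id-f_i^B)
$$
is an idempotent in $\End(B(\bar V_n))$ whose image equals $\bigcap_{i=1}^n\Ker B(d_i)=\gamma_n(B)$. Naturality $\phi_{\bar V_n}\circ e^B=e^C\circ \phi_{\bar V_n}$ for a natural transformation $\phi\colon B\to C$ follows directly from naturality of $\phi$ applied to the morphisms $g=s_id_i$.

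To conclude, apply $(-)(\bar V_n)$ to the given short exact sequence of functors; the result is a short exact sequence of $\bfk(\Sigma_n)$-modules (the $\Sigma_n$-action coming from the $\Sigma_n$-action on $\bar V_n$). The natural idempotent $e$ splits each term as a direct sum of $\gamma_n(-)$ and $\Ker(e)$, so the image sub-sequence $\gamma_n(A)\rInto^{\gamma_n(j)}\gamma_n(B)\rOnto^{\gamma_n(p)}\gamma_n(C)$ is short exact; the induced maps are $\bfk(\Sigma_n)$-equivariant since they are restrictions of the $\bfk(\Sigma_n)$-equivariant maps $j_{\bar V_n},p_{\bar V_n}$ to $\Sigma_n$-stable subspaces. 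The only potential source of concern is that $e^B$ itself is built from maps $s_i,d_i$ that are not $\Sigma_n$-equivariant, so $e^B$ need not commute with the $\Sigma_n$-action; however, we never use that commutation, only that $e$ is natural in $B$.
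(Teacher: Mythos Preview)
Your argument is correct. The key observation---that the projections $s_id_i$ on $\bar V_n$ pairwise commute, hence so do the induced idempotents $f_i^B=B(s_id_i)$, and that the product $e^B=\prod_i(\id-f_i^B)$ is therefore an idempotent with image $\bigcap_i\Ker B(d_i)=\gamma_n(B)$---is sound, and the naturality of $e^B$ in $B$ is exactly what is needed to split the evaluated short exact sequence compatibly. Your closing caveat is also on point: you never need $e^B$ to commute with the $\Sigma_n$-action, only that its image $\gamma_n(B)$ is $\Sigma_n$-stable, which the paper has already established via diagram~(\ref{equation2.3}).

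The paper's own proof takes a different route: it packages the modules $\bar V_n$ with their faces $d_i$ and cofaces $d^i$ (your $s_i$) into a bi-$\Delta$-group in the sense of~\cite{Wu2}, applies the functors $A,B,C$ to obtain a short exact sequence of bi-$\Delta$-groups, and then invokes \cite[Proposition~1.2.10]{Wu2} as a black box. Your approach unpacks what that black box presumably contains in this linear setting---the commuting-idempotent trick---and is thereby fully self-contained and more elementary. The trade-off is that the paper's citation situates the result within a general simplicial framework that may be reused elsewhere, while your argument is tailored to the case at hand and requires no outside machinery.
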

\textbf{Note.} The exact functor $\gamma_n(-)$ is a variation of the Schur functor given in~\cite{Green} in the following sense: Let $B$ is a sub-quotient functor of $T_n$ and let $V$ be a module with $m=\dim V\geq n$. Then $B(V)$ is a sub-quotient $\bfk(GL_m(\bfk))$-module of $V^{\otimes n}$. Let $\bar V_n$ embed into $V$ in the canonical way such that $V=\bar V\oplus V'$. In our definition, $\gamma_n(B)\subseteq B(\bar V)\subseteq B(V)$. According to~\cite[Section 1.2, p.71]{Donkin-Erdmann}, $B(V)\mapsto \gamma_n(B)$ is the Schur functor.

\begin{proof}
Define coface operation $d^i\colon \bar V_{n-1}\to \bar V_n$ by setting $$
d^i(x_j)=\left\{
\begin{array}{lcl}
x_j&\textrm{ if } & j<i\\
x_{j+1}&\textrm{ if }& j\geq i\\
\end{array}\right.
$$
for $1\leq i\leq n$. Then the sequence of module $\{\bar V_{n+1}\}_{n\geq 0}$ with faces $$d_1,\ldots,d_n\colon \bar V_{n}\to \bar V_{n-1}$$ relabeled as $d_0,\ldots,d_{n-1}$ and cofaces $$d^1,\ldots,d^n\colon \bar V_{n-1}\to \bar V_n$$ relabeled as $d^0,\ldots,d^{n-1}$ by shifting indices down by $1$ forms a bi-$\Delta$-groups in the sense of~\cite[Section 1.2]{Wu2}. By applying the functors to the bi-$\Delta$-group $\{\bar V_{n+1}\}_{n\geq 0}$, one gets a short exact sequence of bi-$\Delta$-groups
$$
\{A(\bar V_{n+1})\}_{n\geq 0}\rInto \{B(\bar V_{n+1})\}_{n\geq 0}\rOnto \{C(\bar V_{n+1})\}_{n\geq 0}.
$$
The assertion then follows by~\cite[Proposition 1.2.10]{Wu2}.
\end{proof}

\begin{cor}\label{corollary2.6}
Let $\phi\colon A\to B$ be a natural transformation between functors from modules to modules. Suppose that
$$
\gamma_n(\phi)\colon \gamma_n(A)\to \gamma_n(B)
$$
is an isomorphism for each $n\geq1$. Then
$$
\phi_V\colon A(V)\to B(V)
$$
is an isomorphism for any finite dimensional module $V$. Thus if both $A$ and $B$ preserve colimits, then $\phi$ is a natural equivalence.
\end{cor}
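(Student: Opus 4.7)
The plan is to translate the hypothesis into a vanishing statement about the kernel and cokernel of $\phi$ (viewed as functors from modules to modules) and then to collapse those functors by a downward induction on $\dim V$. Form $K = \Ker\phi$ and $D = \Coker\phi$ pointwise; both are functors on modules fitting into short exact sequences $0\to K\to A\to \Image\phi\to 0$ and $0\to \Image\phi\to B\to D\to 0$. Applying the exact functor $\gamma_n(-)$ of Proposition~\ref{proposition2.5} to these sequences, the hypothesis that $\gamma_n(\phi)$ is an isomorphism for every $n\geq 1$ forces $\gamma_n(K)=0=\gamma_n(D)$ for every $n\geq 1$.

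The technical heart is the following collapse assertion, which I would prove in isolation: \emph{if a functor $F$ from modules to modules satisfies $F(0)=0$ and $\gamma_n(F)=0$ for every $n\geq 1$, then $F(\bar V_n)=0$ for every $n\geq 0$.} This is an induction on $n$: given $F(\bar V_{n-1})=0$, each face map $F(d_i)\colon F(\bar V_n)\to F(\bar V_{n-1})$ is the zero map, hence
$$
\gamma_n(F) \;=\; \bigcap_{i=1}^{n}\Ker F(d_i) \;=\; F(\bar V_n),
$$
which vanishes by hypothesis.

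To apply this collapse lemma to $K$ and $D$, I would first pass to reduced functors via the canonical natural splitting $F(V)=F(0)\oplus \tilde F(V)$ furnished by the maps $0\rightleftarrows V$, under which a direct check gives $\gamma_n(F)=\gamma_n(\tilde F)$ for every $n\geq 1$ (the constant summand $F(0)$ contributes trivially). The reduced functors $\tilde K,\tilde D$ thus inherit the vanishing of $\gamma_n$ and also satisfy the base-case condition $\tilde K(0)=\tilde D(0)=0$, so the collapse lemma yields $\tilde K(V)=\tilde D(V)=0$ on every finite-dimensional $V$; since $\phi_0$ is automatically an isomorphism in the reduced setting (which covers the subfunctors of $T$ considered in this paper), this establishes the first assertion. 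For the colimit statement, write any module as a filtered colimit $V=\colim_\alpha V_\alpha$ of its finite-dimensional submodules; when $A$ and $B$ preserve colimits, $\phi_V=\colim_\alpha \phi_{V_\alpha}$ is a filtered colimit of isomorphisms and hence itself an isomorphism.

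The main obstacle is conceptual rather than computational: one must recognise that the hypothesis on $\gamma_n(\phi)$, combined with the exactness of $\gamma_n$ established in Proposition~\ref{proposition2.5}, reduces everything to the one-line inductive collapse on $F(\bar V_n)$. Once that reduction is in hand, neither the induction itself nor the standard passage from finite-dimensional modules to arbitrary modules via filtered colimits presents any real difficulty.
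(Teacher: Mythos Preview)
Your proposal is correct and follows essentially the same approach as the paper: both form the kernel and cokernel of $\phi$, use the exactness of $\gamma_n$ from Proposition~\ref{proposition2.5} to deduce that these functors have vanishing $\gamma_n$, and then argue by induction on $\dim V$ (your ``collapse lemma'' is precisely the paper's minimal-dimension contradiction argument, rephrased as a direct induction). You are in fact slightly more careful than the paper about the base case $\dim V=0$, which the paper's proof glosses over.
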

\begin{proof}
Let $C$ be the cokernel of $\phi$. Suppose that $C(V)\not=0$ for some finite dimensional module $V$. Let
$$
n=\min\{ k\ | \ C(V)\not=0\ \mathrm{dim}(V)=k\}.
$$
Then $\gamma_n(C)=C(\bar V_n)\not=0$. By Proposition~\ref{proposition2.5}, $\gamma_n(C)=0$ which is a contradiction. Thus $C(V)=0$ for any finite dimensional module $V$. Similarly, for $D$ the kernel of $\phi$, we have $D(V)=0$ for any finite dimensional module $V$, finishing the proof.
\end{proof}

\subsection{$T_n$-projective functors}\label{subsection2.4}

Consider the functor $T_n$. Let $\gamma_n=\gamma_n(T_n)$. Then $\gamma_n$ is the $\bfk$-submodule of $\bar V_n^{\otimes n}$ spanned by the monomials
$$
x_{\sigma(1)}\otimes\cdots \otimes x_{\sigma(n)}
$$
for $\sigma\in\Sigma_n$ with the right symmetric group action explicitly given by
\begin{equation}\label{equation2.5}
(x_{i_1}\otimes \cdots\otimes x_{i_n})\cdot \sigma=x_{\sigma(i_1)}\otimes \cdots\otimes x_{\sigma(i_n)}
\end{equation}
for $\sigma\in \Sigma_n$ and the monomials $x_{i_1}\cdots x_{i_n}\in \gamma_n$. Observe that
\begin{equation}\label{equation2.6}
\gamma_n\cong \bfk(\Sigma_n)
\end{equation}
as a $\bfk(\Sigma_n)$-module.

Let $V$ be any $\bfk$-module and let $a_1,\ldots,a_n\in V$. We write $a_1\cdots a_n$ for the tensor product $a_1\otimes\cdots\otimes a_n\in V^{\otimes n}$ if there are no confusions. Let the symmetric group $\Sigma_n$ act on $V^{\otimes n}$ by permuting positions. More precisely the left $\bfk(\Sigma_n)$-action  on $V^{\otimes n}$ is given
\begin{equation}\label{equation2.7}
\sigma\cdot (a_1\cdots a_n)=a_{\sigma(1)}\cdots a_{\sigma(n)}
\end{equation}
for $\sigma\in \Sigma_n$ and the monomials $a_1\cdots a_n\in V^{\otimes n}$. Let $B$ be any functor from modules to modules. Define the functor $\gamma_n^B(-)$ by setting
\begin{equation}\label{equation2.8}
\gamma_n^B(V)=\gamma_n(B)\otimes_{\bfk(\Sigma_n)}V^{\otimes n}
\end{equation}
for any module $V$. Clearly
$$
\gamma_n^{T_n}(V)=\gamma_n\otimes_{\bfk(\Sigma_n)}V^{\otimes n}\cong T_n(V).
$$

\begin{prop}\label{proposition2.7}
Let
$$B\rInto^{j} T_n\rOnto^p C $$
be a short exact sequence of functors from modules to modules. Then the natural isomorphism $\gamma_n\otimes_{\bfk(\Sigma_n)}V^{\otimes n}\cong T_n(V)$ induces a natural commutative diagram of exact sequences
\begin{diagram}
\gamma_n(B)\otimes_{\bfk(\Sigma_n)} V^{\otimes n} &\rTo^{\gamma_n(j)\otimes\id} &\gamma_n\otimes_{\bfk(\Sigma_n)} V^{\otimes n}&\rOnto^{\gamma_n(p)\otimes\id} &\gamma_n(C)\otimes_{\bfk(\Sigma_n)}V^{\otimes n}\\
\dTo&&\dTo>{\cong}&&\dTo\\
B(V)&\rInto& T_n(V)&\rOnto& C(V)\\
\end{diagram}
with a natural exact sequence
$$
\mathrm{Tor}^{\bfk(\Sigma_n)}_1(\gamma_n(C), V^{\otimes n})\hookrightarrow \gamma_n(B)\otimes_{\bfk(\Sigma_n)}V^{\otimes n} \rightarrow B(V)\rightarrow \gamma_n(C)\otimes_{\bfk(\Sigma_n)}V^{\otimes n}\twoheadrightarrow C(V).
$$
for any module $V$.
\end{prop}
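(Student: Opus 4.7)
The plan is to build, for every functor $F$ from modules to modules, a natural evaluation map $\Phi^F_V \colon \gamma_n(F) \otimes_{\bfk(\Sigma_n)} V^{\otimes n} \to F(V)$, identify $\Phi^{T_n}$ with the stated isomorphism $\gamma_n \otimes_{\bfk(\Sigma_n)} V^{\otimes n} \cong T_n(V)$, and then combine Proposition~\ref{proposition2.5} with the standard $\mathrm{Tor}$ long exact sequence and a snake-lemma-style diagram chase to read off both the commutative diagram and the five-term exact sequence.

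For $\xi \in \gamma_n(F) \subseteq F(\bar V_n)$ and $v_1 \otimes \cdots \otimes v_n \in V^{\otimes n}$, let $\ell_v \colon \bar V_n \to V$ be the linear map with $\ell_v(x_i) = v_i$ and set $\Phi^F_V(\xi \otimes v_1 \cdots v_n) = F(\ell_v)(\xi)$. Writing $\sigma_\ast \colon \bar V_n \to \bar V_n$ for the linear map $x_i \mapsto x_{\sigma(i)}$ that induces the right action $\xi \cdot \sigma = F(\sigma_\ast)(\xi)$, one has $\ell_v \circ \sigma_\ast = \ell_{\sigma \cdot v}$, so
$$\Phi^F_V(\xi \cdot \sigma \otimes v) = F(\ell_v) F(\sigma_\ast)(\xi) = F(\ell_{\sigma v})(\xi) = \Phi^F_V(\xi \otimes \sigma \cdot v),$$
and $\Phi^F_V$ descends to the tensor product over $\bfk(\Sigma_n)$. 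Functoriality of $F$ makes $\Phi^F_V$ natural in $V$ and in $F$. For $F = T_n$, the map $\Phi^{T_n}_V$ sends $x_{\sigma(1)} \cdots x_{\sigma(n)} \otimes v_1 \cdots v_n$ to $v_{\sigma(1)} \cdots v_{\sigma(n)}$, which is the standard isomorphism asserted in the middle column.

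Proposition~\ref{proposition2.5} applied to $B \hookrightarrow T_n \twoheadrightarrow C$ yields a short exact sequence $\gamma_n(B) \hookrightarrow \gamma_n \twoheadrightarrow \gamma_n(C)$ of right $\bfk(\Sigma_n)$-modules. By~(\ref{equation2.6}), $\gamma_n \cong \bfk(\Sigma_n)$ is free, hence $\mathrm{Tor}_1^{\bfk(\Sigma_n)}(\gamma_n, V^{\otimes n}) = 0$, and the $\mathrm{Tor}$ long exact sequence obtained by tensoring over $\bfk(\Sigma_n)$ with $V^{\otimes n}$ truncates to
$$0 \to \mathrm{Tor}_1^{\bfk(\Sigma_n)}(\gamma_n(C), V^{\otimes n}) \to \gamma_n(B) \otimes_{\bfk(\Sigma_n)} V^{\otimes n} \to \gamma_n \otimes_{\bfk(\Sigma_n)} V^{\otimes n} \to \gamma_n(C) \otimes_{\bfk(\Sigma_n)} V^{\otimes n} \to 0.$$
Naturality of $\Phi^F$ in $F$ applied to $j$ and $p$ then produces the commutative diagram of the proposition, with middle column the isomorphism $\Phi^{T_n}$ and bottom row the given short exact sequence $B(V) \hookrightarrow T_n(V) \twoheadrightarrow C(V)$.

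For the five-term sequence, define $\delta \colon B(V) \to \gamma_n(C) \otimes_{\bfk(\Sigma_n)} V^{\otimes n}$ as the composite $B(V) \hookrightarrow T_n(V) \cong \gamma_n \otimes_{\bfk(\Sigma_n)} V^{\otimes n} \twoheadrightarrow \gamma_n(C) \otimes_{\bfk(\Sigma_n)} V^{\otimes n}$; commutativity of the right square forces $\Phi^C \circ \delta = 0$. A direct diagram chase against the commutative diagram then verifies exactness of
$$\mathrm{Tor}_1^{\bfk(\Sigma_n)}(\gamma_n(C), V^{\otimes n}) \hookrightarrow \gamma_n(B) \otimes_{\bfk(\Sigma_n)} V^{\otimes n} \xrightarrow{\Phi^B} B(V) \xrightarrow{\delta} \gamma_n(C) \otimes_{\bfk(\Sigma_n)} V^{\otimes n} \xrightarrow{\Phi^C} C(V)$$
at each interior spot, using injectivity of the middle isomorphism, surjectivity of $p$, and exactness of the bottom row. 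The only mildly delicate step is keeping the two $\bfk(\Sigma_n)$-actions straight when verifying that $\Phi^F_V$ is balanced over $\bfk(\Sigma_n)$, and then executing the snake-lemma-style chase at the end; no deeper input beyond Proposition~\ref{proposition2.5} and the freeness $\gamma_n \cong \bfk(\Sigma_n)$ is required.
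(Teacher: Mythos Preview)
Your proposal is correct and follows essentially the same route as the paper: both construct the comparison map via the ``evaluation at a tuple'' morphism $\ell_v=f_a\colon \bar V_n\to V$ with $x_i\mapsto v_i$, then invoke Proposition~\ref{proposition2.5} and the freeness $\gamma_n\cong\bfk(\Sigma_n)$ to obtain the $\mathrm{Tor}$ four-term sequence and the diagram. The only cosmetic difference is that you package the construction as a single natural transformation $\Phi^F_V$ defined for all $F$ and deduce the squares from naturality in $F$, whereas the paper defines $\Phi^B_V$ as the composite through $\theta$ and checks by hand (via the commuting square with $B(f_a)$ and $T_n(f_a)$) that its image lands in $B(V)$; you also spell out the connecting map $\delta$ and the snake-type chase for the five-term sequence, which the paper leaves implicit.
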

\begin{proof}
By taking the image of $j_V$, we may consider $B(V)$ is a submodule of $T_n(V)$ for any module $V$.  Let
$$
\theta\colon \gamma_n\otimes_{\bfk(\Sigma_n)} V^{\otimes n}\to T_n(V)
$$
be the isomorphism and let $\Phi^B_V=\theta\circ (\gamma_n(j)\otimes \id).$ Observe that the isomorphism $\theta$ is given by
$$
\theta(x_1\cdots x_n\otimes a_1\cdots a_n)=a_1\cdots a_n
$$
for $a_1,\ldots,a_n\in V$. Let $a=a_1\cdots a_n\in V^{\otimes n}$ be any monomial with $a_j\in V$ for $1\leq j\leq n$ and let
$$
\alpha=\sum_{\sigma\in\Sigma_n}k_{\sigma}x_{\sigma(1)}\cdots x_{\sigma(n)}\in \gamma_n(B).
$$
Then
$$
\begin{array}{rcl}
\Phi^B_V(\alpha\otimes a_1\cdots a_n)&=&\sum\limits_{\sigma\in\Sigma_n} k_{\sigma} x_{\sigma(1)}\cdots x_{\sigma(n)}\otimes a_1\cdots a_n\\
&=&\sum\limits_{\sigma\in\Sigma_n} k_{\sigma} (x_{1}\cdots x_{n})\cdot\sigma\otimes a_1\cdots a_n\\
&=&\sum\limits_{\sigma\in\Sigma_n} k_{\sigma} x_{1}\cdots x_{n}\otimes \sigma\cdot(a_1\cdots a_n)\\
&=&\sum\limits_{\sigma\in\Sigma_n}k_{\sigma} x_{1}\cdots x_{n}\otimes a_{\sigma(1)}\cdots a_{\sigma(n)} \\
&=&\sum\limits_{\sigma\in\Sigma_n} k_{\sigma} a_{\sigma(1)}\cdots a_{\sigma(n)}\in V^{\otimes n}.\\
\end{array}
$$
Define a linear transformation $f_a\colon \bar V_n\to V$ by setting
$$
f_a(x_i)=a_i
$$
for $1\leq i\leq n$. Consider $T_n(f_a)=f_a^{\otimes n}\colon T_n(\bar V_n)\to T_n(V)$. Then
$$
T_n(f_a)(\alpha)=f_a^{\otimes n}\left(\sum_{\sigma\in\Sigma_n}k_{\sigma}x_{\sigma(1)}\cdots x_{\sigma(n)}\right)=\Phi^B_V(\alpha\otimes a_1\cdots a_n).
$$
From the commutative diagram
\begin{diagram}
B(\bar V_n)&\rInto^{j_{\bar V_n}}& T_n(\bar V_n)\\
\dTo>{B(f_a)}&&\dTo>{T_n(f_a)}\\
B(V)&\rInto^{j_V}& T_n(V),\\
\end{diagram}
since
$$
\alpha\in \gamma_n(B)\subseteq B(\bar V_n),
$$
we have
\begin{equation}\label{equation2.9}
\Phi^B_V(\alpha\otimes a_1\cdots a_n)=T_n(f_a)(\alpha)\in B(V).
\end{equation}
It follows that
$$
\mathrm{Im}(\Phi^B_V)\subseteq B(V)
$$
for any module $V$. Thus we have the left square commutative diagram in the statement.

By Proposition~\ref{proposition2.5}, there is a short exact sequence of $\bfk(\Sigma_n)$-modules
$$
\gamma_n(B)\rInto^{\gamma_n(j)} \gamma_n\rOnto^{\gamma_n(p)} \gamma_n(C).
$$
Since $\gamma_n$ is a free $\bfk(\Sigma_n)$-module, there is a exact sequence
$$
\mathrm{Tor}_1^{\bfk(\Sigma_n)}(\gamma_n(C),V^{\otimes n})\hookrightarrow
\gamma_n(B)\otimes_{\bfk(\Sigma_n)}V^{\otimes n} \to \gamma_n\otimes_{\bfk(\Sigma_n)}V^{\otimes n} \twoheadrightarrow \gamma_n(C)\otimes_{\bfk(\Sigma_n)} V^{\otimes n}.
$$
Hence we have the right square of the commutative diagram as well as the exact sequence in the statement.
\end{proof}

\begin{example}\label{example2.8}
{\rm We give an example that the natural transformation
$$
\gamma_n(B)\otimes_{\bfk(\Sigma_n)}V^{\otimes n} \longrightarrow B(V)
$$
could be neither epimorphism nor monomorphism for subfunctors $B$ of $T_n$. Let $\bfk$ be a field of characteristic $2$. The Lie power
$L_2(V)$ is the submodule of $V^{\otimes 2}$ spanned by $[a,b]=ab-ba$ for $a,b\in V$. The restricted Lie power $L^{\mathrm{res}}_2(V)$ is the submodule of $V^{\otimes 2}$ spanned by $a^2, [a,b]$ for $a,b\in V$. Then
$$
\gamma_2(L_2)=\gamma_2(L^{\mathrm{res}}_2)
$$
is the $1$-dimensional submodule of $\bfk(\Sigma_2)$ generated by $1-\tau$. Since $\bfk$ is of characteristic $2$, $\gamma_2(L_2)=\gamma_2(L^{\mathrm{res}}_2)$ is the trivial $\bfk(\Sigma_2)$-module and so
$$
\gamma_2(L^{\res}_2)\otimes_{\bfk(\Sigma_2)}V^{\otimes 2}=S_2(V),
$$
the $2$-fold symmetric product of $V$. The natural transformation
$$
\gamma_2(L^{\res}_2)\otimes_{\bfk(\Sigma_2)}V^{\otimes 2}\longrightarrow L^{\res}_2(V)
$$
is not epimorphism for $V$ with $\dim V\geq 1$ because its image is given by $L_2(V)$. The kernel of this natural transformation is measured by
$$
\mathrm{Tor}_1^{\bfk(\Sigma_2)}(\gamma_2/\Lie(2), V^{\otimes 2})\not=0
$$
for $V$ with $\dim V\geq 1$.
\hfill $\Box$

}\end{example}

Let $B$ be a functor from modules to modules. The \textit{dual functor $B^*$} is defined as follows. For any finite dimensional module $V$, define
$$
B^*(V)=B(V^*)^*,
$$
where $V^*=\Hom_{\bfk}(V,\bfk)$ is the dual $\bfk$-module of $V$, and, for a general module $V$, let
$$
B^*(V)=\colim_{V_{\alpha}}B^*(V_{\alpha})
$$
be the direct limit of the module $B^*(V_{\alpha})$ subject to the direct system given by the diagram of all finite dimensional submodules of $V$ with inclusions. Clearly $T_n^*=T_n$.

\begin{prop}\label{proposition2.9}
Let $B$ be a functor from modules to modules. Then $\gamma_n(B^*)$ is the dual $\bfk(\Sigma_n)$-module of $\gamma_n(B)$ for each $n\geq 1$.
\end{prop}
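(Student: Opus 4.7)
My plan is to exhibit $\gamma_n(B)$ as the image of a single $\Sigma_n$-equivariant idempotent on $B(\bar V_n)$, whose dualization then produces $\gamma_n(B^*)$.

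For each $1\le i\le n$, let $e_i\colon\bar V_n\to\bar V_n$ be the linear endomorphism that kills $x_i$ and fixes $x_j$ for $j\neq i$. A direct check on basis elements gives
$$
e_i \;=\; d^i\circ d_i \qquad\text{and}\qquad d_i\circ d^i \;=\; \id_{\bar V_{n-1}},
$$
with $d^i$ the coface operation introduced in the proof of Proposition~\ref{proposition2.5}. Applying $B$ to the second identity shows $B(d^i)$ is split monic, hence $\Ker B(e_i)=\Ker B(d_i)$ for each $i$, and
$$
\gamma_n(B)\;=\;\bigcap_{i=1}^n\Ker B(d_i)\;=\;\bigcap_{i=1}^n\Ker B(e_i).
$$
The $e_i$'s are pairwise commuting idempotents of $\bar V_n$, so the same holds for the $B(e_i)$'s on $B(\bar V_n)$, and consequently
$$
\pi\;:=\;\prod_{i=1}^n\bigl(1-B(e_i)\bigr)\in\End_\bfk\bigl(B(\bar V_n)\bigr)
$$
is an idempotent whose image is exactly $\gamma_n(B)$ (it is the projection onto the simultaneous zero-eigenspace of the commuting family $\{B(e_i)\}$). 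Another direct computation shows that the $\Sigma_n$-action on $\bar V_n$ permutes the set $\{e_1,\dots,e_n\}$ by the rule $e_i\mapsto e_{\sigma(i)}$, so after applying $B$ the idempotent $\pi$ is invariant under this permutation and hence $\Sigma_n$-equivariant. Thus $\gamma_n(B)=\Image(\pi)$ as right $\bfk(\Sigma_n)$-modules.

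Next I dualize. The dual-basis identification $\bar V_n^*\cong\bar V_n$, $x_i^*\leftrightarrow x_i$, is $\Sigma_n$-equivariant (a direct check against $x_i\cdot\sigma=x_{\sigma(i)}$), and under it one verifies that $e_i^*=e_i$. Applying $B$ and then the linear dual yields a $\Sigma_n$-equivariant isomorphism
$$
B^*(\bar V_n)\;=\;B(\bar V_n^*)^*\;\cong\;B(\bar V_n)^*
$$
that carries $B^*(e_i)=B(e_i^*)^*$ to $B(e_i)^*$. Running the previous paragraph with $B$ replaced by $B^*$ therefore identifies $\gamma_n(B^*)$ with the image of $\pi^*\in\End_\bfk(B(\bar V_n)^*)$.

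Finally, for any idempotent $\pi$ on a $\bfk$-module $M$ there is a canonical isomorphism $\Image(\pi^*)\cong\Image(\pi)^*$: the decomposition $M=\Image(\pi)\oplus\Ker(\pi)$ shows that $\pi^*$ is the projection onto the annihilator of $\Ker(\pi)$ along the annihilator of $\Image(\pi)$, and restricting a functional to $\Image(\pi)$ supplies the isomorphism. This isomorphism is $\Sigma_n$-equivariant because $\pi$ is. Combining the three steps gives the desired natural isomorphism $\gamma_n(B^*)\cong\gamma_n(B)^*$ of $\bfk(\Sigma_n)$-modules. The only genuine pieces of computation are the basis-level identities $e_i=d^i\circ d_i$, $d_i\circ d^i=\id$, and $e_i^*=e_i$, together with the $\Sigma_n$-equivariance of the dual basis identification; beyond these, the argument is simply the dualization of a commuting family of idempotents and presents no real obstacle.
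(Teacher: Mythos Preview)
Your proof is correct. Both you and the paper use the same key ingredient, the $\Sigma_n$-equivariant identification $\bar V_n\cong\bar V_n^*$ via $x_i\leftrightarrow x_i^*$ (the paper calls it $\theta_n$), and both then transport the definition of $\gamma_n$ across the induced isomorphism $B^*(\bar V_n)\cong B(\bar V_n)^*$.

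The packaging differs, though. The paper simply writes down the composite
\[
\gamma_n(B^*)\hookrightarrow B(\bar V_n^*)^*\xrightarrow{B(\theta_n)^*} B(\bar V_n)^*\twoheadrightarrow \gamma_n(B)^*
\]
and declares it a ``routine check'' that this is an isomorphism of $\bfk(\Sigma_n)$-modules. You instead realize $\gamma_n(B)$ as the image of the explicit $\Sigma_n$-equivariant idempotent $\pi=\prod_i(1-B(e_i))$ built from the commuting projections $e_i=d^i\circ d_i$, and then invoke the general fact $\Image(\pi^*)\cong\Image(\pi)^*$. Your route buys a cleaner verification: once one sees that $e_i^*=e_i$ under $\theta_n$, the dualization of the whole commuting family is automatic, and no ad hoc diagram chase is needed. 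The paper's route is shorter on the page but leaves exactly that chase to the reader. Conceptually the two arguments are the same; practically, your idempotent formulation is the honest content of the paper's ``routine.''
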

\begin{proof}
For the basis $\{x_1,\ldots,x_n\}$ for $\bar V_n$, let $\{x_1^*,\ldots,x_n^*\}$ be the standard dual basis of $\bar V_n^*$. Let
$$
\theta_n\colon \bar V_n\to \bar V^*_n
$$
be the linear transformation such that $\theta_n(x_j)=x_j^*$ for $1\leq j\leq n$. Then it is routine to check that the composite
$$
\gamma_n(B^*)\subseteq B^*(\bar V_n)=B(\bar V_n^*)^*\rTo^{B(\theta_n)^*} B(\bar V_n)^*\rOnto \gamma_n(B)^*
$$
is an isomorphism of $\bfk(\Sigma_n)$-modules.
\end{proof}

We call a direct sum of copies of $T_n$ a \textit{free $T_n$-functor}. A functor $B$ from modules to modules is called \textit{$T_n$-projective} if there exists a free $T_n$-functor $F$ together with natural transformations $s\colon B\to F$ and $r\colon F\to B$ such that $r\circ s\colon B\to B$ is a natural equivalence. In other words, a $T_n$-projective functor means a summand (or retract) of a free $T_n$-functor.

\begin{prop}\label{proposition2.10}
Let $B$ be a functor from modules to modules.
\begin{enumerate}
\item[1)] If $B$ is a $T_n$-projective functor, then $\gamma_n(B)$ is a projective $\bfk(\Sigma_n)$-module and there is a natural isomorphism
$$
\gamma_n(B)\otimes_{\bfk(\Sigma_n)}V^{\otimes n}\cong B(V)
$$
for any module $V$.
\item[2)] If $B$ is a subfunctor of a direct sum of finite copies of $T_n$ with the property that $\gamma_n(B)$ is a projective $\bfk(\Sigma_n)$-module, then $B$ is a $T_n$-projective functor. Moreover there is a natural equivalence $B\cong B^*$.
\item[3)] If $B$ is a quotient functor of a direct sum of finite copies of $T_n$ with the property that $\gamma_n(B)$ is a projective $\bfk(\Sigma_n)$-module, then $B$ is a $T_n$-projective functor. Moreover there is a natural equivalence $B\cong B^*$.
\item[4)] Let $B$ be a sub-quotient functor of a direct sum of finite copies of $T_n$. Suppose that $B$ is a $T_n$-projective functor. Then $B$ is both projective and injective in the category of sub-quotient functors of direct sums of finite copies of $T_n$.
\end{enumerate}
\end{prop}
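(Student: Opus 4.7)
The plan is to systematically pass between a functor $B$ and its associated right $\bfk(\Sigma_n)$-module $\gamma_n(B)$, using Propositions~\ref{proposition2.5} and~\ref{proposition2.7} as the dictionary. For (1), write $B$ as a retract of a free $T_n$-functor $F=\bigoplus_\alpha T_n$ via natural transformations $s\colon B\to F$ and $r\colon F\to B$ with $r\circ s=\id_B$. Applying the exact functor $\gamma_n$ (Proposition~\ref{proposition2.5}) exhibits $\gamma_n(B)$ as a retract of the free $\bfk(\Sigma_n)$-module $\gamma_n(F)\cong\bigoplus_\alpha\bfk(\Sigma_n)$, so $\gamma_n(B)$ is projective. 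The natural transformation $\iota_B\colon\gamma_n(B)\otimes_{\bfk(\Sigma_n)}V^{\otimes n}\to B(V)$ provided by Proposition~\ref{proposition2.7} is natural in $B$, and for $B=F$ it is the isomorphism noted after~(\ref{equation2.8}); a diagram chase identifying $\iota_B$ as a retract of $\iota_F$ then shows that $\iota_B$ is itself an isomorphism.

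For (2) and (3) the idea is to promote a $\bfk(\Sigma_n)$-level splitting coming from projectivity of $\gamma_n(B)$ to a functor-level splitting of $F=\bigoplus_{i=1}^N T_n$. In (2), the inclusion $\gamma_n(B)\hookrightarrow\gamma_n(F)=\bigoplus_{i=1}^N\bfk(\Sigma_n)$ splits because $\gamma_n(B)$ is projective (and hence also injective) over the self-injective algebra $\bfk(\Sigma_n)$; in (3) the surjection $\gamma_n(F)\twoheadrightarrow\gamma_n(B)$ splits directly from projectivity of $\gamma_n(B)$. Tensoring the splitting with $V^{\otimes n}$ over $\bfk(\Sigma_n)$ and transporting along $\iota_F$ exhibits the tautological functor $B'(V):=\gamma_n(B)\otimes_{\bfk(\Sigma_n)}V^{\otimes n}$ as a natural summand of $F$. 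By the calculation leading to~(\ref{equation2.9}), the induced natural transformation $B'\to F$ factors through $B$; and since the projective module $\gamma_n(B)$ is flat one computes $\gamma_n(B')\cong\gamma_n(B)\otimes_{\bfk(\Sigma_n)}\gamma_n\cong\gamma_n(B)$, the map $B'\to B$ inducing the identity on $\gamma_n$. Because $B$ and $B'$ are concentrated in tensor degree $n$, Corollary~\ref{corollary2.6} yields $B'\cong B$, so $B$ is a summand of $F$ and hence $T_n$-projective. The equivalence $B\cong B^*$ then follows from (1) combined with Proposition~\ref{proposition2.9} (which gives $\gamma_n(B^*)\cong\gamma_n(B)^*$) and the standard fact that finitely generated projective modules over the symmetric group algebra $\bfk(\Sigma_n)$ are self-dual. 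The main technical obstacle in this step is the identification $B'\cong B$, which rests on the flatness of $\gamma_n(B)$ and on the rigidity furnished by Corollary~\ref{corollary2.6}.

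For (4), I would first establish, for $T_n$-projective $B$ and any sub-quotient functor $D$ of a finite direct sum of copies of $T_n$, the bijection
\[
\gamma_n\colon\Hom(B,D)\stackrel{\cong}{\longrightarrow}\Hom_{\bfk(\Sigma_n)}(\gamma_n(B),\gamma_n(D)),
\]
whose inverse sends a $\bfk(\Sigma_n)$-map $\psi$ to the composite $B(V)\cong\gamma_n(B)\otimes_{\bfk(\Sigma_n)}V^{\otimes n}\stackrel{\psi\otimes\id}{\longrightarrow}\gamma_n(D)\otimes_{\bfk(\Sigma_n)}V^{\otimes n}\stackrel{\iota_D}{\longrightarrow}D(V)$. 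Since $\gamma_n$ is exact and $\gamma_n(B)$ is projective, $\Hom(B,-)$ is exact on the given category, so $B$ is projective. For injectivity I would use the fact that $(-)^*$ is a contravariant self-equivalence on sub-quotient functors of $\bigoplus_{i=1}^N T_n$ (since $T_n^*=T_n$), and therefore interchanges projectives and injectives; combined with the equivalence $B\cong B^*$ from (2)/(3) this yields injectivity at once. Equivalently, self-injectivity of $\bfk(\Sigma_n)$ ensures that the projective module $\gamma_n(B)$ is also injective, which transfers to $B$ via the same $\Hom$-bijection.
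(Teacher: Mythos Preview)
Your arguments for (1) and (4) are sound. Part (1) matches the paper's (which calls it straightforward), and your approach to (4) via the $\Hom$-bijection $\Hom(B,D)\cong\Hom_{\bfk(\Sigma_n)}(\gamma_n(B),\gamma_n(D))$ is a clean alternative to the paper's direct reduction to $B=T_n$. (Your ``equivalently'' for injectivity would need a bijection with $B$ in the second variable, which does not follow from the same construction since $\iota_D$ need not be invertible; but your first argument via $B\cong B^*$ and the contravariant duality is fine.)

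The genuine gap is in (2)/(3). You correctly produce a natural monomorphism $B'\hookrightarrow B$ with $B'(V)=\gamma_n(B)\otimes_{\bfk(\Sigma_n)}V^{\otimes n}$ and verify that $\gamma_n(B')\to\gamma_n(B)$ is an isomorphism. But Corollary~\ref{corollary2.6} requires $\gamma_m(B'\to B)$ to be an isomorphism for \emph{every} $m\geq1$, and you only check $m=n$. Being a subfunctor of $T_n^{\oplus N}$ does \emph{not} force $\gamma_m$ to vanish for $m<n$: for instance $\gamma_1(T_n)$ is one-dimensional, and more generally $\gamma_m(T_n)$ is the span of all monomials in $\bar V_m^{\otimes n}$ in which each $x_1,\dots,x_m$ occurs. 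So the phrase ``concentrated in tensor degree $n$'' does not license the appeal to Corollary~\ref{corollary2.6}. (It is in fact true that a \emph{subfunctor} of $T_n^{\oplus N}$ with $\gamma_n=0$ must vanish, but proving this requires knowing that only simples with non-zero $(1^n)$-weight space occur in the socle of $\bar V_n^{\otimes n}$ over the Schur algebra --- a separate, non-trivial input that your argument does not supply.)

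The paper bypasses this entirely with a dimension count through the dual. From the exact sequence of Proposition~\ref{proposition2.7}, projectivity of $\gamma_n(F/B)$ kills $\mathrm{Tor}_1$ and makes $\Phi^B_V$ injective. Dually, $B^*$ is a quotient of $F^*=F$, so $\Phi^{B^*}_V$ is surjective; since $\gamma_n(B^*)\cong\gamma_n(B)^*\cong\gamma_n(B)$ (Proposition~\ref{proposition2.9} plus self-duality of finitely generated projectives over $\bfk(\Sigma_n)$) and $\dim B(V)=\dim B^*(V)$ for finite-dimensional $V$, one gets
\[
\dim B(V)=\dim B^*(V)\leq\dim\bigl(\gamma_n(B)\otimes_{\bfk(\Sigma_n)}V^{\otimes n}\bigr)\leq\dim B(V),
\]
forcing $\Phi^B_V$ to be an isomorphism. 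This passage through $B^*$ and the resulting squeeze on dimensions is the idea your proof is missing.
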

\begin{proof}
The proof of assertion~(1) is straightforward. Assertion (3) follows from (2) by considering the dual functor.

(2). Let $B$ be a subfunctor of $F$, where $F$ is a finite direct sum of copies of $T_n$. Let $C=F/B$. Then there is a short exact sequence
$$
\gamma_n(B)\rInto \gamma_n(F)\rOnto \gamma_n(C).
$$
Since $\gamma_n(B)$ is a finitely generated projective $\bfk(\Sigma_n)$-module, it is an injective $\bfk(\Sigma_n)$-module and so the above short exact sequence splits off as $\bfk(\Sigma_n)$-modules. It follows that $\gamma_n(C)$ is a projective $\bfk(\Sigma_n)$-module because $\gamma_n(F)$ is a free $\bfk(\Sigma_n)$-module. $$
\mathrm{Tor}^{\bfk(\Sigma_n)}_1(\gamma_n(C),V^{\otimes n})=0.
$$
Since $F$ is a direct sum of copies of the functor $T_n$, we can apply the exact sequence in Proposition~\ref{proposition2.7}. In particular, the natural transformation
\begin{equation}\label{equation2.10}
\Phi^B_V\colon \gamma_n(B)\otimes_{\bfk(\Sigma_n)}V^{\otimes n}\longrightarrow B(V)
\end{equation}
is a natural monomorphism. The natural inclusion $B\hookrightarrow F$ induces an natural epimorphism $F=F^*\twoheadrightarrow B^*$. By Proposition~\ref{proposition2.7}, there is a natural epimorphism
\begin{equation}\label{equation2.11}
\Phi^{B^*}_V\colon \gamma_n(B^*)\otimes_{\bfk(\Sigma_n)}V^{\otimes n} \longrightarrow B^*(V).
\end{equation}
By Proposition~\ref{proposition2.9}, $$\gamma_n(B^*)\cong\gamma_n(B)^*\cong \gamma_n(B)$$ as $\bfk(\Sigma_n)$-modules because $\gamma_n(B)$ is a finitely generated $\bfk(\Sigma_n)$-projective modules. Let $V$ be any finite dimensional $\bfk$-module. From equations~(\ref{equation2.10}) and~(\ref{equation2.11}, we have
$$
\dim B(V)=\dim B(V^*)^*=\dim B^*(V)\leq \dim (\gamma_n(B)\otimes_{\bfk(\Sigma_n)}V^{\otimes n})\leq \dim B(V).
$$
Thus $\Phi^B_V$ and $\Phi^{B^*}_V$ are isomorphisms for any finite dimensional module $V$. Since the functors $\gamma_n(B)\otimes_{\bfk(\Sigma_n)}(-)^{\otimes n}$, $B$ and $B^*$ preserve colimits, the natural transformations $\Phi^B$ and $\Phi^{B^*}$ are natural equivalences. The assertion now follows from the fact that $\gamma_n(B)\otimes_{\bfk(\Sigma_n)}V^{\otimes n}$ is a natural summand of
$
\gamma_n(F)\otimes_{\bfk(\Sigma_n)}V^{\otimes n}\cong F(V).
$

(4). Let $\mathcal{C}$ be the category of sub-quotient functors of direct sums of copies of $T_n$. It suffices to show that $T_n$ is projective and injective in the $\mathcal{C}$. Let $B$ be an object in $\mathcal{C}$ with a natural epimorphism
$q\colon B \twoheadrightarrow T_n$. It induces an epimorphism $\gamma_n(q)\colon \gamma_n(B)\to \gamma_n(T_n)=\gamma_n$. Since $\gamma_n$ is $\bfk(\Sigma_n)$-projective, there is a $\bfk(\Sigma_n)$-cross-section $s\colon \gamma_n(T_n)\to \gamma_n(B)$. Now the natural transformation
$$
T_n(V)\cong \gamma_n(T_n)\otimes_{\bfk(\Sigma_n)}V^{\otimes n}\rTo^{s\otimes\id} \gamma_n(B)\otimes_{\bfk(\Sigma_n)}V^{\otimes n}\rTo^{\Phi^B_V} B(V)
$$
is a cross-section to $q$ and so $T_n$ is projective in $\mathcal{C}$. Since $T_n\cong T_n^*$ is self-dual, $T_n$ is also injective in $\mathcal{C}$. The proof is finished.
\end{proof}

We remark that if $B$ is a sub-quotient functor of $T_n$ with the property that $\gamma_n(B)$ is $\bfk(\Sigma_n)$-projective, it is possible that $B$ is not $T_n$-projective. For instance, for the ground field $\bfk$ being of characteristic $2$, the functor $B=L^{\res}_2/L_2$ has the property that $\gamma_2(B)=0$ with $B$ not $T_2$-projective.

\section{The Structure on Lie Power Functors}\label{section3}

\subsection{The Lie Power Functors and the Symmetric Group Modules $\Lie(n)$}\label{subsection3.1}
In this section, the ground ring is a field $\bfk$. Let $V$ be a module. The \textit{free Lie algebra $L(V)$} generated by $V$ is the smallest sub Lie algebra of the tensor algebra $T(V)$ containing $V$, where the Lie structure on $T(V)$ is given by $[a,b]=ab-ba$. The functor $L$ admits a graded structure that
$$
L(V)=\bigoplus_{n=1}^\infty L_n(V),
$$
where $L_n(V)=L(V)\cap T_n(V)$ which is called the \textit{$n$th Lie power} of $V$. By applying equation~(\ref{equation2.4}) to the functor $L_n$, we have the symmetric group module
$$
\Lie(n)=\gamma_n(L_n).
$$
Let $\bar V$ be the $n$-dimensional module with a basis $\{x_1,\ldots,x_n\}$ as in subsection~\ref{subsection2.3}. From the definition,
$$
\Lie(n)=L_n(\bar V_n)\cap\gamma_n
$$
spanned by the homogenous Lie elements of length $n$ in which each $x_i$ occurs exactly once. By
the Witt formula, $\Lie(n)$ is of dimension $(n-1)!$. Following from the antisymmetry and the Jacobi identity, $\Lie(n)$
has a basis given by the elements
$$
[[x_1,x_{\sigma(2)}],x_{\sigma(3)}],\ldots,x_{\sigma(n)}]
$$
for $\sigma\in \Sigma_{n-1}$. (See ~\cite{Cohen2}.)

\begin{prop}\label{proposition3.1}
There is a natural short exact sequence
$$
\mathrm{Tor}^{\bfk(\Sigma_n)}_1(\gamma_n/\Lie(n), V^{\otimes n}) \rInto \Lie(n)\otimes_{\bfk(\Sigma_n)}V^{\otimes n}\rOnto L_n(V)
$$
for any module $V$.
\end{prop}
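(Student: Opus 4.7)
The plan is to apply Proposition~\ref{proposition2.7} to the short exact sequence of functors
$$
L_n \rInto T_n \rOnto T_n/L_n
$$
from modules to modules. By the very definition of $\Lie(n) = \gamma_n(L_n)$ and the exactness of $\gamma_n(-)$ established in Proposition~\ref{proposition2.5}, applying $\gamma_n$ yields the short exact sequence of $\bfk(\Sigma_n)$-modules
$$
\Lie(n)\rInto \gamma_n\rOnto \gamma_n/\Lie(n).
$$
Substituting into the five-term natural exact sequence from Proposition~\ref{proposition2.7} produces
$$
\mathrm{Tor}^{\bfk(\Sigma_n)}_1(\gamma_n/\Lie(n), V^{\otimes n}) \rInto \Lie(n)\otimes_{\bfk(\Sigma_n)}V^{\otimes n}\rTo L_n(V)\rTo (\gamma_n/\Lie(n))\otimes_{\bfk(\Sigma_n)}V^{\otimes n}\rOnto (T_n/L_n)(V).
$$

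To extract the claimed short exact sequence from this five-term sequence, it remains to show that the middle arrow
$$
\Phi^{L_n}_V\colon \Lie(n)\otimes_{\bfk(\Sigma_n)}V^{\otimes n}\longrightarrow L_n(V)
$$
is surjective. The key observation is the explicit formula $\Phi^{L_n}_V(\alpha\otimes a_1\cdots a_n) = L_n(f_a)(\alpha)$ derived in equation~(\ref{equation2.9}) of Proposition~\ref{proposition2.7}, where $f_a\colon \bar V_n\to V$ is the linear map sending $x_i\mapsto a_i$.

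To produce surjectivity, I would use the standard fact that $L_n(V)$ is spanned as a $\bfk$-module by multilinear Lie monomials, i.e.\ by elements of the form $b(a_1,\ldots,a_n)$ where $a_1,\ldots,a_n\in V$ (possibly repeated) and $b$ is an iterated bracket expression linear in each of its $n$ slots (this follows directly from the Witt basis description of $\Lie(n)$ recalled just before the proposition). For any such multilinear bracket, set $\tilde b = b(x_1,\ldots,x_n)\in L_n(\bar V_n)$. Since each $x_i$ occurs exactly once in $\tilde b$, we have $\tilde b\in L_n(\bar V_n)\cap \gamma_n = \Lie(n)$. Taking $f_a(x_i)=a_i$, the naturality formula gives
$$
\Phi^{L_n}_V(\tilde b\otimes a_1\cdots a_n) = L_n(f_a)(\tilde b) = b(a_1,\ldots,a_n),
$$
so every spanning element of $L_n(V)$ lies in the image of $\Phi^{L_n}_V$. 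This forces the arrow $L_n(V)\to (\gamma_n/\Lie(n))\otimes_{\bfk(\Sigma_n)}V^{\otimes n}$ in the five-term sequence to vanish, collapsing it to the desired short exact sequence.

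The only nontrivial step is the surjectivity of $\Phi^{L_n}_V$, but the equation $\Phi^{L_n}_V(\tilde b\otimes a_1\cdots a_n) = b(a_1,\ldots,a_n)$ handed to us by Proposition~\ref{proposition2.7} makes this transparent once one recalls that $L_n(V)$ is spanned by multilinear Lie words in elements of $V$. Naturality in $V$ is automatic since every piece of the construction (the tensor product, $\Lie(n)$, $L_n$, and the map $\Phi^{L_n}_V$) is already natural in $V$.
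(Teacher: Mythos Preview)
Your proof is correct and follows essentially the same route as the paper: reduce via Proposition~\ref{proposition2.7} to the surjectivity of $\Phi^{L_n}_V$, and then verify surjectivity by observing that $\Phi^{L_n}_V(\tilde b\otimes a_1\cdots a_n)=b(a_1,\ldots,a_n)$ for Lie monomials, which span $L_n(V)$. The paper happens to use the left-normed brackets $[[a_1,a_2],\ldots,a_n]$ specifically, but this is only a cosmetic difference from your use of arbitrary iterated brackets.
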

\begin{proof}
By Proposition~\ref{proposition2.7}, it suffices to show that the natural transformation
$$
\Phi^{L_n}_V\colon \Lie(n)\otimes_{\bfk(\Sigma_n)}V^{\otimes n} \longrightarrow L_n(V)
$$
is an epimorphism. Let $[[a_1,a_2],\ldots a_n]\in L_n(V)$ with $a_1,\ldots,a_n\in V$. Let
$$
\alpha=[[x_1,x_2],\ldots,x_n]\in \Lie(n).
$$
Then exists unique $k_{\sigma}\in\bfk$ such that
$$
\alpha=[[x_1,x_2],\ldots,x_n]=\sum_{\sigma\in \Sigma_n}k_{\sigma}x_{\sigma(1)}\cdots x_{\sigma(n)}.
$$
Following the lines in the proof of Proposition~\ref{proposition2.7}, we have
\begin{equation}\label{equation3.1}
\Phi^{L_n}_V(\alpha\otimes a_1\cdots a_n)=\sum_{\sigma\in \Sigma_n}k_{\sigma}a_{\sigma(1)}\cdots a_{\sigma(n)}=[[a_1,a_2],\ldots,a_n].
\end{equation}
The assertion follows from the fact that $L_n(V)$ is the $\bfk$-module spanned by the Lie elements $[[a_1,a_2],\ldots,a_n]$ with $a_j\in V$.
\end{proof}

For any natural transformation $\phi\colon L_n\to L_n$, we have the $\bfk(\Sigma_n)$-linear map
$$
\gamma_n(\phi)\colon \gamma_n(L_n)=\Lie(n)\longrightarrow \gamma_n(L_n)=\Lie(n).
$$
This defines a ring homomorphism
$
\gamma\colon \End(L_n)\longrightarrow \End_{\bfk(\Sigma_n)}(\Lie(n)).
$

\begin{prop}\label{proposition3.2}
If $n\not=m$, then $\Hom(L_n,L_m)=0$. Moreover the ring homomorphism $$
\gamma\colon \End(L_n)\rTo \End_{\bfk(\Sigma_n)}(\Lie(n))
$$
is an isomorphism with a natural commutative diagram of functors
\begin{diagram}
\Lie(n)\otimes_{\bfk(\Sigma_n)} V^{\otimes n}&\rTo^{\gamma_n(\delta)\otimes\id}& \Lie(n)\otimes_{\bfk(\Sigma_n)}V^{\otimes n}\\
\dOnto>{\Phi^{L_n}_V}&&\dOnto>{\Phi^{L_n}_V}\\
L_n(V)&\rTo^{\delta_V}& L_n(V)\\
\end{diagram}
for any natural transformation $\delta\colon L_n\to L_n$.
\end{prop}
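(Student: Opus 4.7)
My plan is to establish the commutative diagram first (which yields injectivity of $\gamma$ for free), then the $\mathrm{Hom}$ vanishing, and finally surjectivity of $\gamma$. For the commutative diagram, I would argue directly from naturality. For $\alpha\in\Lie(n)\subseteq L_n(\bar V_n)$ and $a=a_1\cdots a_n\in V^{\otimes n}$, formula~(\ref{equation2.9}) specialised to $B=L_n$ gives $\Phi^{L_n}_V(\alpha\otimes a)=L_n(f_a)(\alpha)$, where $f_a\colon\bar V_n\to V$ sends $x_i\mapsto a_i$. Naturality of $\delta$ along $f_a$ then gives
$$
\delta_V\bigl(\Phi^{L_n}_V(\alpha\otimes a)\bigr) = L_n(f_a)\bigl(\delta_{\bar V_n}(\alpha)\bigr) = L_n(f_a)\bigl(\gamma_n(\delta)(\alpha)\bigr) = \Phi^{L_n}_V\bigl(\gamma_n(\delta)(\alpha)\otimes a\bigr),
$$
which is exactly the commutativity. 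Injectivity of $\gamma$ is now immediate: if $\gamma_n(\delta)=0$ the top of the square vanishes, so $\delta_V\circ\Phi^{L_n}_V=0$; surjectivity of $\Phi^{L_n}_V$ (Proposition~\ref{proposition3.1}) then forces $\delta=0$.

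For $\mathrm{Hom}(L_n,L_m)=0$ when $n\ne m$, I would sandwich any $\phi\colon L_n\to L_m$ between the surjection of Proposition~\ref{proposition3.1} and the inclusion $L_m\hookrightarrow T_m$, yielding a natural transformation
$$
\Lie(n)\otimes_{\bfk(\Sigma_n)}T_n \xrightarrow{\Phi^{L_n}} L_n \xrightarrow{\phi} L_m \hookrightarrow T_m.
$$
The tensor--Hom adjunction together with naturality in $V$ identifies natural transformations from $\Lie(n)\otimes_{\bfk(\Sigma_n)}T_n$ to $T_m$ with $\bfk(\Sigma_n)$-linear maps $\Lie(n)\to\mathrm{Hom}(T_n,T_m)$, and this target vanishes for $n\ne m$ by equation~(\ref{equation2.2}). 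Since $\Phi^{L_n}$ is surjective and $L_m\hookrightarrow T_m$ is injective, this forces $\phi=0$.

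For surjectivity of $\gamma$, given $g\in\mathrm{End}_{\bfk(\Sigma_n)}(\Lie(n))$, I would produce the desired $\delta$ as follows. Compose $g$ with the inclusion $\iota\colon\Lie(n)\hookrightarrow\gamma_n$ to obtain a right $\bfk(\Sigma_n)$-linear map $\iota g\colon\Lie(n)\to\gamma_n$. The key input is that $\bfk(\Sigma_n)$ is a Frobenius algebra (as a group algebra over a field), hence the regular right module $\gamma_n\cong\bfk(\Sigma_n)$ is self-injective, so $\iota g$ extends to some right $\bfk(\Sigma_n)$-linear $\tilde g\colon\gamma_n\to\gamma_n$. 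Tensoring with $V^{\otimes n}$ over $\bfk(\Sigma_n)$ and using the canonical isomorphism $\gamma_n\otimes_{\bfk(\Sigma_n)}V^{\otimes n}\cong V^{\otimes n}$, $\tilde g$ promotes to a natural endomorphism of $T_n$; because $\tilde g\circ\iota=\iota\circ g$ has image in $\iota(\Lie(n))$, this endomorphism carries $L_n(V)=\mathrm{Im}(\Phi^{L_n}_V)$ into itself and so restricts to the required $\delta\in\mathrm{End}(L_n)$, with $\gamma_n(\delta)=g$ by construction. Independence from the choice of extension $\tilde g$ is automatic: two extensions differ on $\gamma_n$ by a map vanishing on $\iota(\Lie(n))$, and this contributes nothing on $L_n(V)$.

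The main obstacle I anticipate is recognising that the Frobenius/self-injective property of $\bfk(\Sigma_n)$ is what guarantees the existence of $\tilde g$ in arbitrary characteristic; without this, one is forced into a delicate direct analysis of the $\mathrm{Tor}_1^{\bfk(\Sigma_n)}(\gamma_n/\Lie(n),V^{\otimes n})$ kernel appearing in Proposition~\ref{proposition3.1} to verify well-definedness of the naive formula $\delta_V(\Phi^{L_n}_V(\alpha\otimes a)):=\Phi^{L_n}_V(g(\alpha)\otimes a)$.
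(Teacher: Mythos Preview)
Your proposal is correct and follows essentially the same approach as the paper: the commutative diagram via naturality along $f_a$, injectivity of $\gamma$ from that diagram plus surjectivity of $\Phi^{L_n}$, and surjectivity of $\gamma$ by extending along $\Lie(n)\hookrightarrow\gamma_n$ using that $\bfk(\Sigma_n)$ is Frobenius. The only minor divergence is in the $\Hom(L_n,L_m)=0$ step, where the paper precomposes with the explicit surjection $\bar\beta_n\colon T_n\to L_n$, $a_1\cdots a_n\mapsto[[a_1,a_2],\ldots,a_n]$, rather than with $\Phi^{L_n}$ and a tensor--Hom adjunction; the paper's route is a touch more direct but the content is the same.
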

\begin{proof}
Let $\phi\colon L_n\to L_m$ be a natural transformation. Let
$\bar\beta_n\colon T_n\to L_n$ be the natural epimorphism defined by
$
\bar\beta_n(a_1\cdots a_n)=[[a_1,a_2],\ldots,a_n]
$
for any module $W$ and any monomial $a_1\cdots a_n\in T_n(W)=W^{\otimes n}$. Then the
composite $$T_n\rOnto^{\bar \beta} L_n\rTo^\phi L_m\rInto T_m$$ is a natural
transformation, which is zero as $\Hom(T_n,T_m)=0$ for $n\not=m$. Thus $\phi=0$.

For the second statement, let $\delta\colon L_n\to L_n$ be a natural transformation. Let $V$ be any module. Consider
$
[[a_1,a_2],\ldots,a_n]\in L_n(V)
$
with $a_j\in V$. Let $f_a\colon \bar V_n\to V$ be the linear map with $f_a(x_j)=a_j$ for $1\leq j\leq n$. Then there is a commutative diagram
\begin{equation}\label{equation3.2}
\begin{diagram}
\gamma_n(L_n)&\subseteq& L_n(\bar V_n)&\rTo^{L_n(f_a)}& L_n(V)\\
\dTo>{\gamma_n(\delta)}&&\dTo>{\delta_{\bar V_n}}&&\dTo>{\delta_V}\\
\gamma_n(L_n)&\subseteq& L_n(\bar V_n)&\rTo^{L_n(f_a)}& L_n(V).\\
\end{diagram}
\end{equation}
Thus
$$
\begin{array}{rlr}
&\delta_V\circ\Phi_V^{L_n}([[x_1,x_2],\ldots,x_n]\otimes a_1\cdots a_n) & \\
=&\delta_V([[a_1,a_2],\ldots,a_n]) &\textrm{ by equation~(\ref{equation3.1})}\\
=&L_n(f_a)(\gamma_n(\delta)([[x_1,x_2],\ldots,x_n]))&\textrm{ by equation~(\ref{equation3.2})}\\
=&\Phi_V^{L_n}(\gamma_n(\delta)([[x_1,x_2],\ldots,x_n])\otimes a_1\cdots a_n)&\textrm{ by equation~(\ref{equation2.9})}\\
\end{array}
$$
and so the diagram in the statement commutes. It follows that the map $$\gamma\colon \End(L_n)\to \End_{\bfk(\Sigma_n)}(\Lie(n))$$ is a monomorphism.

For showing that $\gamma$ is an epimorphism, let $\theta\colon \Lie(n)\to \Lie(n)$ be any $\bfk(\Sigma_n)$-linear map. Since $\bfk(\Sigma_n)$ is a Fr\"{o}benius algebra, the free $\bfk(\Sigma_n)$-module $\gamma_n$ is injective and so there is a commutative diagram of exact sequences of $\bfk(\Sigma_n)$-modules
\begin{diagram}
\Lie(n)&\rInto& \gamma_n&\rOnto&\gamma_n/\Lie(n)\\
\dTo>{\theta}&&\dTo>{\tilde\theta}&&\dTo>{\bar\theta}\\
\Lie(n)&\rInto& \gamma_n&\rOnto&\gamma_n/\Lie(n).\\
\end{diagram}
It follows that there is a commutative diagram of short exact sequence of functors
\begin{diagram}
\Tor^{\bfk(\Sigma_n)}_1(\gamma_n/\Lie(n),V^{\otimes n})&\rInto& \Lie(n)\otimes_{\bfk(\Sigma_n)}V^{\otimes n}&\rOnto^{\Phi^{L_n}_V}& L_n(V)\\
\dTo>{\Tor(\bar\theta,\id)}&&\dTo>{\theta\otimes \id}&&\dTo>{\delta}\\
\Tor^{\bfk(\Sigma_n)}_1(\gamma_n/\Lie(n),V^{\otimes n})&\rInto& \Lie(n)\otimes_{\bfk(\Sigma_n)}V^{\otimes n}&\rOnto^{\Phi^{L_n}_V}& L_n(V)\\
\end{diagram}
for some natural transformation $\delta\colon L_n\to L_n$. By taking $V=\bar V_n$ and restricting to the submodule $\gamma_n\subseteq \bar V^{\otimes n}$, we have the commutative diagram
\begin{diagram}
\Lie(n)&\rTo^{g}_\cong&\Lie(n)\otimes_{\bfk(\Sigma_n)}\gamma_n&\rTo^{\Phi^{L_n}_{\bar V_n}|}_{\cong}&\Lie(n)\\
\dTo>{\theta}&&\dTo>{\theta\otimes\id}&&\dTo>{\gamma_n(\delta)}\\
\Lie(n)&\rTo^{g}_\cong&\Lie(n)\otimes_{\bfk(\Sigma_n)}\gamma_n&\rTo^{\Phi^{L_n}_{\bar V_n}|}_{\cong}&\Lie(n),\\
\end{diagram}
where $g(\alpha)=\alpha\otimes x_1\cdots x_n$. Thus $\theta=\gamma_n(\delta)$
because
$$
\begin{array}{rcl}
\Phi^{L_n}_{\bar V}\circ g([[x_{\sigma(1)},x_{\sigma(2)}],\ldots,x_{\sigma(n)}])&=& \Phi^{L_n}_{\bar V}([[x_{1},x_{2}],\ldots,x_{n}]\cdot\sigma\otimes x_1\cdots x_n)\\
&=& \Phi^{L_n}_{\bar V}([[x_1,x_2],\ldots,x_n]\otimes \sigma\cdot(x_1\cdots x_n))\\
&=& \Phi^{L_n}_{\bar V}([[x_1,x_2],\ldots,x_n]\otimes x_{\sigma(1)}\cdots x_{\sigma(n)})\\
&=&[[x_{\sigma(1)},x_{\sigma(2)}],\ldots,x_{\sigma(n)}].\\
\end{array}
$$
The proof is finished.
\end{proof}

\begin{cor}\label{corollary3.3}
There is a one-to-one correspondence, multiplicity preserving, between the decompositions of
the functor $L_n$ and the decompositions of $\Lie(n)$ over
$\bfk(\Sigma_n)$.\hfill $\Box$
\end{cor}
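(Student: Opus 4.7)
The plan is to reduce the statement to a purely ring-theoretic fact, using the ring isomorphism $\gamma \colon \End(L_n) \to \End_{\bfk(\Sigma_n)}(\Lie(n))$ provided by Proposition~\ref{proposition3.2}. Recall the standard fact that in any additive category, direct sum decompositions of an object $X$ are in bijection with complete systems of orthogonal idempotents in $\End(X)$: a decomposition $X \cong X_1 \oplus \cdots \oplus X_k$ corresponds to the idempotents $e_i \in \End(X)$ that project onto the $i\,$th factor, and conversely each complete orthogonal idempotent system $\{e_i\}$ recovers the summands as $X_i = e_i(X)$. Applying this once to $L_n$ in the category of functors from modules to modules, and once to $\Lie(n)$ in the category of $\bfk(\Sigma_n)$-modules, translates both sides of the claim into statements about complete orthogonal idempotent systems of the respective endomorphism rings.

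Next, I would invoke Proposition~\ref{proposition3.2} to identify these two endomorphism rings. Since $\gamma$ is a ring isomorphism, it carries orthogonal idempotent systems bijectively onto orthogonal idempotent systems, and this bijection matches the corresponding decompositions. Concretely, if $L_n \cong \bigoplus_i L_n^{(i)}$ with idempotents $e_i$, then $\Lie(n) \cong \bigoplus_i \gamma_n(e_i)(\Lie(n))$, and the commutative square in Proposition~\ref{proposition3.2} shows that the summand on the $\Lie(n)$-side is precisely $\gamma_n(L_n^{(i)})$ after identifying $\Lie(n) \otimes_{\bfk(\Sigma_n)} \bar V_n^{\otimes n}$ with $\Lie(n)$ via $\Phi^{L_n}_{\bar V_n}$.

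For the multiplicity-preserving assertion, I would use the characterization that two summands of $X$ with idempotents $e, f \in \End(X)$ are isomorphic exactly when there exist $u, v \in \End(X)$ with $u = fue$, $v = evf$, $vu = e$, and $uv = f$. This is a purely ring-theoretic condition preserved by any ring isomorphism, so $\gamma$ matches equivalent idempotents to equivalent idempotents. Consequently, indecomposable summands on the $L_n$-side correspond to indecomposable summands on the $\Lie(n)$-side, and isomorphism classes are preserved with their multiplicities.

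The only step that is not completely routine is the verification that for a functor $L_n$ (rather than an object of a well-behaved module category) every direct sum decomposition genuinely arises from an idempotent of $\End(L_n)$. This, however, is immediate since the functor category takes values in $\bfk$-modules, where kernels, images, and direct sums exist and behave as in any abelian category; the projections onto summands are natural transformations and hence idempotent endomorphisms of $L_n$. After that observation, the entire statement reduces to transporting idempotents across a ring isomorphism, which is the core content of Proposition~\ref{proposition3.2}.
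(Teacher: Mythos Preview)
Your proposal is correct and is exactly the argument the paper has in mind: the corollary is marked with a $\Box$ and no proof, indicating it is an immediate consequence of the ring isomorphism $\gamma\colon \End(L_n)\to \End_{\bfk(\Sigma_n)}(\Lie(n))$ of Proposition~\ref{proposition3.2}, via the standard bijection between decompositions and complete orthogonal idempotent systems. Your write-up simply spells out this routine deduction in detail.
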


\subsection{The $T_n$-projective Subfunctors of $L_n$}\label{subsection3.2}

Let $Q$ be a subfunctor of $L_n$. Then $Q$ is a subfunctor of $T_n$ because $L_n$ is a subfunctor of $T_n$. By Proposition~\ref{proposition2.10}, the functor $Q$ is $T_n$-projective if and only if $\gamma_n(Q)$ is a $\bfk(\Sigma_n)$-projective module. From Corollary~\ref{corollary3.3}, we have the following:

\begin{prop}\label{proposition3.4}
There is a one-to-one correspondence, multiplicity preserving, between $T_n$-projective sub
functors of $L_n$ and $\bfk(\Sigma_n)$-projective submodules of
$\Lie(n)$ given by $Q\mapsto \gamma_n(Q)$.\hfill $\Box$
\end{prop}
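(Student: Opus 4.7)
The plan is to construct the inverse of the map $Q\mapsto\gamma_n(Q)$ by lifting idempotents through the ring isomorphism of Proposition~\ref{proposition3.2}, using that $\bfk(\Sigma_n)$ is Frobenius. First, the map is well-defined: for $Q$ a subfunctor of $L_n\subseteq T_n$, exactness of $\gamma_n$ (Proposition~\ref{proposition2.5}) gives $\gamma_n(Q)\hookrightarrow\gamma_n(L_n)=\Lie(n)$, and Proposition~\ref{proposition2.10}(1) makes $\gamma_n(Q)$ a projective $\bfk(\Sigma_n)$-module whenever $Q$ is $T_n$-projective. For injectivity, suppose $Q_1,Q_2$ are $T_n$-projective subfunctors of $L_n$ with $\gamma_n(Q_1)=\gamma_n(Q_2)=M$. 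Proposition~\ref{proposition2.10}(1) provides natural isomorphisms $\Phi^{Q_i}_V\colon M\otimes_{\bfk(\Sigma_n)}V^{\otimes n}\to Q_i(V)$, and by the construction in Proposition~\ref{proposition2.7} the composite $M\otimes_{\bfk(\Sigma_n)}V^{\otimes n}\to Q_i(V)\hookrightarrow T_n(V)$ is the restriction to $M\otimes_{\bfk(\Sigma_n)}V^{\otimes n}$ of the fixed natural isomorphism $\theta\colon\gamma_n\otimes_{\bfk(\Sigma_n)}V^{\otimes n}\to T_n(V)$. Hence $Q_1(V)=Q_2(V)$ inside $T_n(V)$ for every $V$, so $Q_1=Q_2$.

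For surjectivity, let $M$ be a projective $\bfk(\Sigma_n)$-submodule of $\Lie(n)$. Because $\bfk(\Sigma_n)$ is Frobenius (as noted in the proof of Proposition~\ref{proposition3.2}), the projective module $M$ is also injective, so the inclusion $M\hookrightarrow\Lie(n)$ splits; let $e\colon\Lie(n)\to\Lie(n)$ be the idempotent endomorphism with image $M$. By the ring isomorphism $\gamma\colon\End(L_n)\cong\End_{\bfk(\Sigma_n)}(\Lie(n))$ of Proposition~\ref{proposition3.2}, there exists a unique natural transformation $\delta\colon L_n\to L_n$ with $\gamma_n(\delta)=e$, and $\delta^2=\delta$ since $\gamma$ is multiplicative. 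Set $Q=\Image(\delta)$; then $L_n\cong Q\oplus\ker(\delta)$ as functors, and exactness of $\gamma_n$ yields $\gamma_n(Q)=\Image(\gamma_n(\delta))=\Image(e)=M$. Since $Q\subseteq L_n\subseteq T_n$ and $\gamma_n(Q)=M$ is projective, Proposition~\ref{proposition2.10}(2) shows $Q$ is $T_n$-projective, completing surjectivity.

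Multiplicity preservation follows from Corollary~\ref{corollary3.3}, because the correspondence above is the restriction of the decomposition equivalence there to those natural direct summands of $L_n$ that are $T_n$-projective, and this restriction preserves direct sums (indecomposable summands of $Q$ match indecomposable summands of $\gamma_n(Q)$). The main obstacle is the surjectivity step, which depends on two inputs: the Frobenius property of $\bfk(\Sigma_n)$, to split off $M$ as a direct summand of $\Lie(n)$ and thus produce an idempotent $e$; and the ring isomorphism of Proposition~\ref{proposition3.2}, to lift this idempotent to an idempotent natural transformation $\delta$ on $L_n$ whose image realizes the desired $T_n$-projective subfunctor.
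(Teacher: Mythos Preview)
Your proof is correct and follows essentially the same route as the paper. The paper's own justification is terse: it observes that for a subfunctor $Q$ of $L_n\subseteq T_n$, Proposition~\ref{proposition2.10} gives the equivalence ``$Q$ is $T_n$-projective $\Leftrightarrow$ $\gamma_n(Q)$ is $\bfk(\Sigma_n)$-projective,'' and then invokes Corollary~\ref{corollary3.3}. You have simply unpacked what this invocation entails---in particular making explicit the Frobenius step needed to realize a projective submodule $M\subseteq\Lie(n)$ as the image of an idempotent, and the lift of that idempotent through the ring isomorphism of Proposition~\ref{proposition3.2}. Your direct injectivity argument via Proposition~\ref{proposition2.7} (identifying both $Q_i(V)$ with the common image $\theta(M\otimes_{\bfk(\Sigma_n)}V^{\otimes n})$ inside $T_n(V)$) is a nice touch that avoids appealing to Corollary~\ref{corollary3.3} for that half, but it is in the same spirit.
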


According to~\cite[Lemma 6.2 and Theorem 7.4]{SW1}, there exists a subfunctor $L^{\max}_n$ of $L_n$ with
$\Lie^{\max}(n)=\gamma_n(L_n^{\max})$ that has the following maximum property:
\begin{enumerate}
\item[1)] $\Lie^{\max}(n)$ is a $\bfk(\Sigma_n)$-projective submodule of $\Lie(n)$ and
\item[2)] any $\bfk(\Sigma_n)$-projective submodule of $\Lie(n)$ is isomorphic to a summand of $\Lie^{\max}(n)$ as a $\bfk(\Sigma_n)$-module.
\end{enumerate}
From the above maximum property, $\Lie^{\max}(n)$ is unique up to isomorphisms of $\bfk(\Sigma_n)$-modules. By the above proposition, $L^{\max}_n$ is unique up to natural equivalences with the maximum property that
\begin{enumerate}
\item[1)] $L^{\max}_n$ is a $T_n$-projective subfunctor of $L_n$ and
\item[2)] any $T_n$-projective subfunctor of $L_n$ is isomorphic to a summand of $L^{\max}_n$.
\end{enumerate}

From Proposition~\ref{proposition2.10}, we have the following:

\begin{prop}\label{proposition3.5}
There is a natural isomorphism
$$
\Phi^{L^{\max}_n}_V\colon \Lie^{\max}(n)\otimes_{\bfk(\Sigma_n)}V^{\otimes n}\longrightarrow L^{\max}_n(V)
$$
for any module $V$.\hfill $\Box$
\end{prop}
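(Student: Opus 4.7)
The plan is to derive Proposition~\ref{proposition3.5} as a direct specialization of Proposition~\ref{proposition2.10}(1). First I would observe that $L^{\max}_n$ is a subfunctor of $T_n$, since by construction it is a subfunctor of $L_n$ and $L_n$ sits inside $T_n$. Hence Proposition~\ref{proposition2.7} produces a natural transformation
$$
\Phi^{L^{\max}_n}_V \colon \gamma_n(L^{\max}_n) \otimes_{\bfk(\Sigma_n)} V^{\otimes n} \longrightarrow L^{\max}_n(V),
$$
which under the defining identification $\gamma_n(L^{\max}_n) = \Lie^{\max}(n)$ is exactly the map named in the statement.

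Next I would invoke that $L^{\max}_n$ is $T_n$-projective by its construction. Proposition~\ref{proposition2.10}(1) then asserts that for any $T_n$-projective functor $B$ the natural map $\gamma_n(B) \otimes_{\bfk(\Sigma_n)} V^{\otimes n} \to B(V)$ is an isomorphism; applied with $B = L^{\max}_n$ this yields the desired isomorphism. To see that the isomorphism coincides with the canonical $\Phi^{L^{\max}_n}_V$ (and is not merely an abstract iso), I would write $L^{\max}_n$ as a retract $s \colon L^{\max}_n \to F$, $r \colon F \to L^{\max}_n$ of a free $T_n$-functor $F$. By naturality of $\Phi^{(-)}_V$ in Proposition~\ref{proposition2.7}, the map $\Phi^{L^{\max}_n}_V$ is a retract of $\Phi^F_V$; since $F$ is a direct sum of copies of $T_n$ and $\Phi^{T_n}_V$ is the tautological isomorphism $\gamma_n \otimes_{\bfk(\Sigma_n)} V^{\otimes n} \cong T_n(V)$, the map $\Phi^F_V$ is an isomorphism, and hence so is its retract $\Phi^{L^{\max}_n}_V$.

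I do not foresee any serious obstacle: the argument is a bookkeeping exercise built from Propositions~\ref{proposition2.7} and~\ref{proposition2.10} and the definition of $L^{\max}_n$. The only mildly subtle point is the identification of the abstract isomorphism guaranteed by Proposition~\ref{proposition2.10}(1) with the canonical $\Phi^{L^{\max}_n}_V$ coming from the inclusion $L^{\max}_n \hookrightarrow T_n$, and this is forced by the naturality of $\Phi^{(-)}_V$ together with the retract presentation of a $T_n$-projective functor.
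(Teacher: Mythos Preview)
Your proposal is correct and follows exactly the paper's own approach: the paper simply states that Proposition~\ref{proposition3.5} follows from Proposition~\ref{proposition2.10}, and you have spelled this out by applying Proposition~\ref{proposition2.10}(1) to the $T_n$-projective functor $B=L^{\max}_n$. Your extra care in identifying the abstract isomorphism with the canonical $\Phi^{L^{\max}_n}_V$ via the retract argument is a welcome clarification of a point the paper leaves implicit.
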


\subsection{The $\bfk(\GL(V))$-module $L_n(V)$}\label{subsection3.3}
In this subsection, the ground field $\bfk$ is an infinite field of characteristic $p>0$ and $V$ is a fixed $\bfk$-module with the action of the general linear group $\GL(V)=\GL_m(\bfk)$ from the right, where $m=\dim V$. Let $\GL(V)$ act on
 $T_n(V)=V^{\otimes n}$ through diagonal, that is,
 $$
(a_1\cdots a_n)\cdot g=(a_1g)\cdots (a_ng)
$$
for $a_i\in V$ and $g\in \GL(V)$. Recall that the \textit{Schur algebra} is defined by
$$
S(V,n)=\End_{\bfk(\Sigma_n)}(V^{\otimes n}),
$$
where the left action of $\Sigma_n$ on $V^{\otimes n}$ is given by permuting factors. By the classical Schur-Weyl duality, the group $\GL(V,n)$ generates algebra $S(V,n)=\End_{\bfk(\Sigma_n)}(V^{\otimes n})$ and so there is an epimorphism of rings
$$
\bfk(\GL(V))\longrightarrow S(V,n).
$$
Observe that if $M$ is a sub-quotient of a direct sum of copies of $V^{\otimes n}$, then the $\bfk(\GL(V))$-action factors through its quotient algebra $S(V,n)$. Thus if $M$ and $N$ are sub-quotients of direct sums of copies of $V^{\otimes n}$, then
$$
\Hom_{\bfk(\GL(V))}(M,N)=\Hom_{S(V,n)}(M,N).
$$
Recall from~\cite{Green} that the category of $\bfk(\GL(V))$-modules that are sub-quotients of direct sums of copies of $V^{\otimes n}$ is equivalent to the category of modules over the Schur algebra $S(V,n)$, which is denoted by $\Mod(S(V,n))$.

Let $B$ be a sub-quotient of a direct sum of copies of $T_n$. The action of $\GL(V)$ on $V$ induces an action on $B(V)$ via the functor $B$. Thus $B(V)$ is a module over $\bfk(\GL(V))$. Since $B(V)$ is a sub-quotient of a direct sum of copies of $V^{\otimes n}$, $B(V)$ is an object in $\Mod(S(V,n))$. Thus we have a functor:
$$
\begin{array}{rl}
\Theta\colon &B\mapsto B(V)\\
& \Hom(A, B)\longrightarrow \Hom_{\bfk(\GL(V))}(A(V),B(V))=\Hom_{S(V,n)}(A(V),B(V))\\
\end{array}
$$
from the category of sub-quotients of direct sums of copies of $T_n$ to $\Mod(S(V,n))$.

\begin{lem}\label{lemma3.6}
Let $B$ be a sub-quotient of a free $T_n$-functor and let $V$ be a module with $\dim(V)\geq n$. Then $B=0$ if and only if $B(V)=0$.
\end{lem}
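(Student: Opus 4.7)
The plan is to reduce the statement to Corollary~\ref{corollary2.6} by showing that $\gamma_k(B)=0$ for every $k\geq 1$; then the zero transformation $0\to B$ induces an isomorphism under each $\gamma_k$, which forces $B=0$ on finite-dimensional modules, and the general case follows by passing to filtered colimits. The ``only if'' direction is immediate, so I would assume $B(V)=0$ for some $V$ with $\dim V\geq n$ and work from there.

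First I would exploit the dimension hypothesis. Since we are working over a field and $\dim V\geq n$, there is a splitting $V\cong \bar V_n\oplus V'$, giving a retraction $V\twoheadrightarrow\bar V_n$ of the inclusion $\bar V_n\hookrightarrow V$. Applying the functor $B$ realizes $B(\bar V_n)$ as a retract of $B(V)=0$, so $B(\bar V_n)=0$. The same retraction trick, now using that $\bar V_k$ is a direct summand of $\bar V_n$ for each $1\leq k\leq n$, yields $B(\bar V_k)=0$ and hence $\gamma_k(B)\subseteq B(\bar V_k)=0$ for every such $k$.

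Next I would handle the range $k>n$. The module $T_n(\bar V_k)=\bar V_k^{\otimes n}$ has a basis of monomials $x_{f(1)}\otimes\cdots\otimes x_{f(n)}$ indexed by maps $f\colon\{1,\ldots,n\}\to\{1,\ldots,k\}$, and such a monomial belongs to $\bigcap_{i=1}^k\Ker T_n(d_i)$ exactly when $f$ is surjective. Since $k>n$ precludes surjectivity, $\gamma_k(T_n)=0$, and the same holds for any free $T_n$-functor. Because $\gamma_k$ is exact by Proposition~\ref{proposition2.5}, every sub-quotient $B$ of a free $T_n$-functor inherits $\gamma_k(B)=0$ for $k>n$. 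Combined with the previous paragraph, $\gamma_k(B)=0$ for every $k\geq 1$, so Corollary~\ref{corollary2.6} applied to the natural transformation $0\to B$ gives $B(W)=0$ for every finite-dimensional $W$.

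The extension to an arbitrary module $W$ is routine: $T_n$ preserves filtered colimits, and over a field any subspace of a filtered colimit splits off at some finite stage, so subfunctors and hence sub-quotients of a free $T_n$-functor also preserve filtered colimits. Writing $W$ as the filtered colimit of its finite-dimensional submodules then forces $B(W)=0$. The only nontrivial input is the small observation that $\gamma_k(T_n)=0$ for $k>n$; without it, Corollary~\ref{corollary2.6} could only be applied in the range $k\leq n$, which would leave the argument incomplete.
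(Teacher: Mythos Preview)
Your proof is correct and follows exactly the paper's strategy: show $\gamma_k(B)=0$ for all $k\geq 1$ (using the retraction $V\twoheadrightarrow\bar V_n$ for $k\leq n$ and the vanishing $\gamma_k(T_n)=0$ plus exactness of $\gamma_k$ for $k>n$), then invoke Corollary~\ref{corollary2.6}. The paper's argument is simply more terse---it calls the vanishing of $\gamma_k(T_n)$ for $k>n$ ``routine'' and does not explicitly discuss the passage to infinite-dimensional modules---but the content is the same.
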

\begin{proof}
If $B=0$, clearly $B(V)=0$. Assume that $B(V)=0$. Let $B=\tilde B/B'$ with $B'\hookrightarrow \tilde B \hookrightarrow F$, where $F$ is a direct sum of copies of $T_n$. It is routine to check that $\gamma_j(T_n)=0$ for $j>n$. Thus $\gamma_j(F)=0$ for $j>n$ and so
$$
\gamma_j(B')=\gamma_j(\tilde B)=\gamma_j(B)=0
$$
for $j>n$. Since $B(V)=0$, we have $B(\bar V_n)=0$ because $\dim V\geq \dim\bar V_n=n$. Thus $\gamma_j(B)=0$ for $j\leq n$. The assertion follows by Corollary~\ref{corollary2.6}.
\end{proof}

\begin{cor}\label{corollary3.7}
Let $A$ and $B$ sub-quotients of free $T_n$-functors and let $V$ be a module with $\dim(V)\geq n$. Then
$$
\Theta\colon \Hom(A,B)\longrightarrow \Hom_{\bfk(\GL(V))}(A(V),B(V))=\Hom_{S(V,n)}(A(V),B(V))
$$
is a monomorphism.
\end{cor}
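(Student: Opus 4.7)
The plan is to reduce the statement to Lemma~\ref{lemma3.6} by forming the image of a natural transformation $\phi\colon A\to B$ as a subfunctor of $B$, and then observing that if $\phi_V=0$ this image vanishes on $V$, hence vanishes altogether.

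More precisely, suppose $\phi\in\Hom(A,B)$ lies in the kernel of $\Theta$, i.e.\ $\phi_V=0$. First I would define the image functor $C\subseteq B$ by $C(W)=\Image(\phi_W\colon A(W)\to B(W))$ for every module $W$; for a linear map $f\colon W\to W'$ the naturality square $B(f)\circ\phi_W=\phi_{W'}\circ A(f)$ forces $B(f)(C(W))\subseteq C(W')$, so $C$ is a (well-defined) subfunctor of $B$. Next I would check that $C$ is itself a sub-quotient of a free $T_n$-functor: writing $B=\tilde B/B'$ with $B'\hookrightarrow\tilde B\hookrightarrow F$ and $F$ a direct sum of copies of $T_n$, the preimage of $C$ inside $\tilde B$ is a subfunctor $\tilde C\subseteq F$, and $C\cong\tilde C/B'$, so $C$ is a sub-quotient of $F$.

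Having put $C$ into the class to which Lemma~\ref{lemma3.6} applies, the rest is immediate: the hypothesis $\phi_V=0$ says precisely that $C(V)=\phi_V(A(V))=0$, so Lemma~\ref{lemma3.6} (which requires $\dim V\geq n$) gives $C=0$. This means $\phi_W=0$ for every module $W$, so $\phi=0$ in $\Hom(A,B)$, establishing that $\Theta$ is a monomorphism. The equality $\Hom_{\bfk(\GL(V))}(A(V),B(V))=\Hom_{S(V,n)}(A(V),B(V))$ is not something to be proved here: it is the general fact recalled just before the statement, using that $A(V)$ and $B(V)$ are sub-quotients of direct sums of copies of $V^{\otimes n}$ so the $\bfk(\GL(V))$-action factors through $S(V,n)$.

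The argument is short and there is no real obstacle beyond bookkeeping; the only delicate point is the verification that the image $C$ inherits the ``sub-quotient of a free $T_n$-functor'' property, so that Lemma~\ref{lemma3.6} is actually applicable. Once this is noted, injectivity of $\Theta$ follows formally. If desired, one can phrase the same argument contrapositively: if $\phi\neq 0$, then $C\neq 0$ is a nonzero sub-quotient of a free $T_n$-functor, and by Lemma~\ref{lemma3.6} $C(V)\neq 0$ for any $V$ with $\dim V\geq n$, so $\phi_V\neq 0$.
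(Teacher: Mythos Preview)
Your proof is correct and follows essentially the same approach as the paper: form the image functor $C=\Image(\phi)$, observe $C(V)=0$, and invoke Lemma~\ref{lemma3.6} to conclude $C=0$. You supply more detail than the paper (which omits the verification that $C$ is a sub-quotient of a free $T_n$-functor), but the argument is the same.
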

\begin{proof}
Let $f\colon A\to B$ be a natural transformation such that $f_V\colon A(V)\to B(V)$ is $0$. Let $C=\mathrm{Im}(f\colon A\to B)$. Then $C(V)=0$. Thus $C=0$ and hence the result.
\end{proof}

A direct sum of finite copies of $T_n$ is called a \textit{finite free $T_n$-functor}.
\begin{prop}\label{proposition3.8}
Let $B$ be a sub-quotient of a finite free $T_n$-functor and let $A$ be a quotient functor of a finite free $T_n$-functor. Suppose that $\dim V\geq n$. Then
the homomorphism
$$
\Theta_{A,B}\colon \Hom(A,B)\longrightarrow \Hom_{\bfk(\GL(V))}(A(V),B(V))=\Hom_{S(V,n)}(A(V),B(V))
$$
is an isomorphism.
\end{prop}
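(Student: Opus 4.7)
The plan is as follows: since Corollary~\ref{corollary3.7} already supplies injectivity of $\Theta_{A,B}$, the task is to prove surjectivity. I would proceed by a two-step reduction. First I would handle the universal case $A = T_n$, which carries all the content; then I would extend to an arbitrary quotient $A$ of a finite free $T_n$-functor by a four-lemma argument resting on the tautological presentation $K \hookrightarrow F_0 \twoheadrightarrow A$.

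For the universal case, fix a direct sum decomposition $V = \bar V_n \oplus W$ placing $\bar V_n$ on the first $n$ basis vectors of $V$, which is possible because $\dim V \geq n$. Given an $S(V,n)$-map $\phi \colon V^{\otimes n} \to B(V)$, I set $\alpha = \phi(x_1 \cdots x_n)$. The key observation is that the $S(V,n)$-action on $B(V)$ induced by the right $\GL(V)$-action via functoriality satisfies $v \cdot f^{\otimes n} = B(f)(v)$ for every $f \in \End(V)$, since this holds on $T_n(V) = V^{\otimes n}$ and $B$ is a sub-quotient of a finite free $T_n$-functor. Applying this to the idempotent $e \in \End(V)$ projecting $V$ onto $\bar V_n$, one has $(x_1 \cdots x_n) \cdot e^{\otimes n} = x_1 \cdots x_n$, so equivariance yields $B(e)(\alpha) = \alpha$, placing $\alpha$ inside $B(\bar V_n) \subseteq B(V)$. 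Applying it to any extension $\tilde d_i \in \End(V)$ of the face map $d_i$, the tensor $(x_1 \cdots x_n) \cdot \tilde d_i^{\otimes n}$ vanishes since its $i$-th slot contains $\tilde d_i(x_i) = 0$, giving $B(\tilde d_i)(\alpha) = 0$ and hence $B(d_i)(\alpha) = 0$. Together these place $\alpha$ in $\gamma_n(B)$. I then define $\tilde\phi_W(a_1 \cdots a_n) = B(f_a)(\alpha)$ for $f_a \colon \bar V_n \to W$, $x_i \mapsto a_i$; this is a mild extension to sub-quotients of the map $\Phi^B_W(\alpha \otimes -)$ of Proposition~\ref{proposition2.7} and is well defined because $B$ is a polynomial functor of degree $n$. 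The identity $\tilde\phi_V = \phi$ then follows by the same equivariance trick applied to an endomorphism $\tilde f_a \in \End(V)$ extending $f_a$ (killing the complementary basis vectors): $(x_1 \cdots x_n) \cdot \tilde f_a^{\otimes n} = a_1 \cdots a_n$, so $\phi(a_1 \cdots a_n) = B(\tilde f_a)(\alpha) = B(f_a)(\alpha) = \tilde\phi_V(a_1 \cdots a_n)$.

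Additivity in the source then gives that $\Theta_{F, B}$ is an isomorphism for every finite free $T_n$-functor $F$. For a general quotient $A$ with $q \colon F_0 \twoheadrightarrow A$ and $K = \ker q$, I lift $\phi \circ q(V)$ to a natural transformation $\tilde\psi \colon F_0 \to B$ using the previous step. The restriction $\tilde\psi|_K$ vanishes upon evaluation at $V$, and since $K$ is a subfunctor of $F_0$, hence a sub-quotient of a finite free $T_n$-functor, Corollary~\ref{corollary3.7} forces $\tilde\psi|_K = 0$ as a natural transformation. Descending $\tilde\psi$ through $q$ then produces the desired $\tilde\phi \colon A \to B$, and $\tilde\phi(V) = \phi$ follows from the surjectivity of $q(V)$. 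The main obstacle is the universal case, and within it the single crucial point is the identification $v \cdot f^{\otimes n} = B(f)(v)$ of the $S(V,n)$-action with the functorial action; once that is in hand and the hypothesis $\dim V \geq n$ is used to supply the idempotent $e$ and the extensions $\tilde d_i$, everything else is driven by equivariance.
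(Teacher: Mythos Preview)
Your proof is correct, and the reduction step from a general quotient $A$ to the free case via the presentation $K\hookrightarrow F_0\twoheadrightarrow A$ and Corollary~\ref{corollary3.7} is exactly what the paper does. Where you diverge is in the treatment of the base case $A=T_n$ (and hence $A$ finite free).

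The paper handles this case abstractly: Schur--Weyl duality gives that $\Theta_{T_n,T_n}$ is an isomorphism, hence $\Theta_{F,F'}$ is an isomorphism for finite free $F,F'$; then the paper invokes the fact (cited from \cite{Donkin}) that $V^{\otimes n}$ is projective over $S(V,n)$, so that both $\Hom(F,-)$ and $\Hom_{S(V,n)}(F(V),-)$ are exact on sub-quotients, and a diagram chase on the filtration $B'\hookrightarrow\tilde B\hookrightarrow F'$ together with injectivity of $\Theta$ forces $\Theta_{F,B}$ to be an isomorphism for every sub-quotient $B$.

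Your route is instead constructive: you produce the preimage of a given $\phi$ directly by reading off $\alpha=\phi(x_1\cdots x_n)\in\gamma_n(B)$ and rebuilding the natural transformation from $\alpha$ via $B(f_a)$. This is essentially an inverse Schur functor argument, in the spirit of equation~(\ref{equation2.9}) but carried out for sub-quotients rather than subfunctors. The payoff is that you avoid the external citation for projectivity of $V^{\otimes n}$ over $S(V,n)$ and give a self-contained argument internal to the paper's own machinery (the $\gamma_n$ formalism of \S\ref{subsection2.3}--\ref{subsection2.4}). The cost is that you must verify by hand the compatibility $v\cdot f^{\otimes n}=B(f)(v)$ for non-invertible $f$ and the multilinearity of $(a_1,\ldots,a_n)\mapsto B(f_a)(\alpha)$; both are straightforward once one lifts through the sub-quotient presentation $B=\tilde B/B'$ with $\tilde B\subseteq F'$, but they are the points a careful reader will want spelled out.
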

\begin{proof}
By Schur-Weyl duality, the monomorphism
$$
\Theta_{T_n,T_n}\colon \Hom(T_n,T_n)\longrightarrow \Hom_{\bfk(\GL(V))}(T_n(V),T_n(V))
$$
is an epimorphism and so $\Theta_{A,B}$ is an isomorphism when $A$ and $B$ are free $T_n$-functors. According to~\cite[p.94]{Donkin}, $V^{\otimes n}$ is projective over $S(V,n)$. Let $A$ be a free $T_n$ functor. By tracking the exact sequence for
$$
\Theta_{A,-} \colon \Hom(A,-)\longrightarrow \Hom_{S(V,n)}(A(V), -)
$$
together with the fact that $\Theta_{A,B}$ is always a monomorphism, we have
$$
\Theta_{A,B} \colon \Hom(A,B)\longrightarrow \Hom_{S(V,n)}(A(V), B(V))
$$
for any sub-quotient $B$ of a free $T_n$-functor. Let $A$ be a quotient of a free functor $F$ with an epimorphism $\phi\colon F\to A$. Let $C=\Ker(\phi)$. From the commutative diagram of exact sequences
\begin{diagram}
\Hom(A,B)&\rInto^{\phi^*}&\Hom(F, B)&\rTo&\Hom(C,B)\\
\dInto>{\Theta_{A,B}}&&\cong\dTo>{\Theta_{F,B}}&&\dInto>{\Theta_{C,B}}\\
\Hom_{S(V,n)}(A(V),B(V))&\rInto^{\phi^*}&\Hom_{S(V,n)}(F(V), B(V))&\rTo&\Hom_{S(V,n)}(C(V),B(V)),\\
\end{diagram}
the monomorphism $\Theta_{A,B}$ is an epimorphism and hence the result.
\end{proof}

A $\bfk(\GL(V))$-submodule $M$ of $L_n(V)$ is called \textit{$T_n$-projective} if $M$ is isomorphic to a summand of a direct summation of $T_n(V)$ as modules over $\bfk(\GL(V))$. Let
$\bar\beta_n\colon T_n\to L_n$ be the natural epimorphism defined by
$
\bar\beta_n(a_1\cdots a_n)=[[a_1,a_2],\ldots,a_n]
$
for any module $W$ and any monomial $a_1\cdots a_n\in T_n(W)=W^{\otimes n}$.  Let $\beta_n$ be the composite
$
T_n\rOnto^{\bar\beta_n} L_n\rInto^{i}T_n.
$

\begin{thm}\label{theorem3.9}
Suppose that $\dim V\geq n$. Then
\begin{enumerate}
\item The ring homomorphism
$$
\Theta_{L_n,L_n}\colon \Hom(L_n,L_n)\longrightarrow \Hom_{\bfk(\GL(V))}(L_n(V),L_n(V))
$$
is an isomorphism.
\item There is a one-to-one correspondence, multiplicity preserving between summands of the functor $L_n$ and $\bfk(\GL(V))$-summands of $L_n(V)$.
\item There is a one-to-one correspondence, multiplicity preserving between $T_n$-projective subfunctors of $L_n$ and $T_n$-projective $\bfk(\GL(V))$-submodules of $L_n(V)$.
\item For the functor $L^{\max}_n$, the module $L_n^{\max}(V)$ is the maximum $T_n$-projective submodule of $L_n(V)$ in the sense that any $T_n$-projective $\bfk(\GL(V))$-submodule of $L_n(V)$ is isomorphic to a summand of $L_n^{\max}(V)$.
\item For any choice of the functor $L_n^{\max}$, the socle $\mathrm{Soc}(L_n^{\max}(V))$ is uniquely determined by
$$
\bar\beta_n(\mathrm{Soc}(V^{\otimes n}))=\beta_n(\mathrm{Soc}(V^{\otimes n})).
$$
as the submodule of $L_n(V)\subseteq T_n(V)$.
\item For any choice of the functor $L_n^{\max}$, the head $\mathrm{Hd}(L_n^{\max}(V))$ is uniquely determined by
$$
\beta_n(\mathrm{Hd}(V^{\otimes n})).
$$
as the quotient module of $T_n(V)$.
\end{enumerate}
\end{thm}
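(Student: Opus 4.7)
My plan is to take the six parts in order, since each builds on the previous.

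For (1), I would apply Proposition~\ref{proposition3.8} directly. The functor $L_n$ is simultaneously a subfunctor of $T_n$ (via the inclusion $i$) and a quotient functor of $T_n$ (via $\bar\beta_n$), so taking $A=B=L_n$ in that proposition yields that $\Theta_{L_n,L_n}$ is an isomorphism. For (2), I would transfer decompositions via idempotents: a direct-sum decomposition of an object corresponds to a complete set of orthogonal idempotents in its endomorphism ring, and a ring isomorphism preserves such sets together with their conjugacy classes (hence preserves the multiplicities of summands); applying the ring isomorphism of (1) then gives the required multiplicity-preserving bijection.

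For (3), I would first observe that any $T_n$-projective subfunctor $Q$ of $L_n$ is automatically a summand of $L_n$: by Proposition~\ref{proposition3.4} the submodule $\gamma_n(Q) \subseteq \Lie(n)$ is $\bfk(\Sigma_n)$-projective, and since $\bfk(\Sigma_n)$ is Frobenius, projective submodules of any $\bfk(\Sigma_n)$-module split off, so $\gamma_n(Q)$ is a summand of $\Lie(n)$ and hence $Q$ is a summand of $L_n$ by Corollary~\ref{corollary3.3}. Part (2) then matches these summands with summands of $L_n(V)$. To check that summand $Q$ is $T_n$-projective as a functor iff $Q(V)$ is $T_n$-projective over $\bfk(\GL(V))$: the forward direction is immediate from evaluation, while for the converse I would lift a given retraction pair $s\colon Q(V)\rightleftarrows T_n(V)^k\colon r$ to natural transformations $\tilde s\colon Q\to T_n^k$ and $\tilde r\colon T_n^k\to Q$ using Proposition~\ref{proposition3.8}, and conclude $\tilde r\tilde s=\id_Q$ from Corollary~\ref{corollary3.7} since it hits $\id_{Q(V)}$. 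Part (4) is then an immediate specialization of (3) to the universal $T_n$-projective subfunctor $L_n^{\max}$.

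For (5), I would first note that $L_n^{\max}(V)$ is in fact a summand of $T_n(V)$. Pick a $\bfk(\Sigma_n)$-splitting $\gamma_n = \Lie^{\max}(n) \oplus N$ (valid since $\Lie^{\max}(n)$ is a projective submodule of the injective $\bfk(\Sigma_n)$-module $\gamma_n \cong \bfk(\Sigma_n)$); tensoring over $\bfk(\Sigma_n)$ with $V^{\otimes n}$ gives a functorial decomposition $T_n\cong L_n^{\max}\oplus N^\sharp$ where $N^\sharp(V)=N\otimes_{\bfk(\Sigma_n)}V^{\otimes n}$. Hence $\Soc(L_n^{\max}(V))=L_n^{\max}(V)\cap\Soc(T_n(V))$, and the projection $\pi\colon T_n(V)\twoheadrightarrow L_n^{\max}(V)$ sends $\Soc(T_n(V))$ onto $\Soc(L_n^{\max}(V))$. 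To identify this image with $\bar\beta_n(\Soc(T_n(V)))$, I would use the Dynkin--Specht--Wever identity $\beta_n|_{L_n}=n\cdot\id_{L_n}$: restricted to the summand $L_n^{\max}(V)\subseteq L_n(V)$, the operator $\bar\beta_n$ is multiplication by $n$. In the case $p\nmid n$ one has $L_n=L_n^{\max}$ and the identification is immediate by splitting $T_n(V)=L_n(V)\oplus\Ker(\bar\beta_n)$; in the case $p\mid n$ the image must come from the $N^\sharp(V)$ summand, and the identification requires showing $\bar\beta_n(\Soc(T_n(V)))\subseteq L_n^{\max}(V)$ via its $T_n$-projectivity and the maximality from (4), followed by a dimension/containment comparison. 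Part (6) proceeds symmetrically, passing to the quotient $\Hd(T_n(V))=T_n(V)/\rad(T_n(V))$ and working with the induced map $\bar\beta_n\colon \Hd(T_n(V))\to\Hd(T_n(V))$.

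The main obstacle is (5)--(6): parts (1)--(4) are formal consequences of the machinery in Sections~\ref{subsection2.4} and~\ref{subsection3.2}, but (5)--(6) require a careful interplay between the Lie operator $\beta_n$ and the summand decomposition $T_n\cong L_n^{\max}\oplus N^\sharp$, especially in the modular range $p\mid n$ where $\beta_n$ annihilates $L_n$ and the contribution to $\bar\beta_n(\Soc(T_n(V)))$ comes entirely from the complement.
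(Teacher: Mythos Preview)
Your treatment of (1)--(4) is correct and essentially matches the paper: (1) is Proposition~\ref{proposition3.8} with $A=B=L_n$, (2) follows by transferring idempotents along the ring isomorphism, (4) specializes (3), and your route to (3) differs only in organisation (the paper lifts an idempotent of $\End_{\bfk(\GL(V))}(T_n(V))$ directly to $\End(T_n)$ rather than passing through summands of $L_n$, but the underlying mechanism---Proposition~\ref{proposition3.8} plus injectivity of $T_n(V)$ over $S(V,n)$---is the same).

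For (5)--(6) there is a genuine gap. Your plan hinges on the Dynkin--Specht--Wever identity $\beta_n|_{L_n}=n\cdot\id$ together with a fixed splitting $T_n\cong L_n^{\max}\oplus N^\sharp$, and this works cleanly when $p\nmid n$ (where indeed $L_n^{\max}=L_n$). But when $p\mid n$ the identity degenerates to $\beta_n|_{L_n}=0$, so it gives no relation whatsoever between your projection $\pi\colon T_n(V)\to L_n^{\max}(V)$ and the bracketing map $\bar\beta_n\colon T_n(V)\to L_n(V)$. In particular, your assertion that one can show $\bar\beta_n(\Soc(T_n(V)))\subseteq L_n^{\max}(V)$ ``via its $T_n$-projectivity and the maximality from (4)'' does not go through: the module $\bar\beta_n(\Soc(T_n(V)))$ is merely semisimple, not $T_n$-projective, so (4) says nothing about it directly. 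No ``dimension/containment comparison'' is available without first establishing one honest containment, and your sketch produces neither in the modular case.

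The paper supplies the two ideas you are missing. For $\Soc(L_n^{\max}(V))\subseteq\bar\beta_n(\Soc(V^{\otimes n}))$ it uses $S(V,n)$-projectivity of $L_n^{\max}(V)$ to lift the inclusion $L_n^{\max}(V)\hookrightarrow L_n(V)$ along the epimorphism $\bar\beta_n\colon V^{\otimes n}\twoheadrightarrow L_n(V)$; the lift is injective and carries socle into $\Soc(V^{\otimes n})$, whence the containment. For the reverse direction it decomposes $V^{\otimes n}=\bigoplus_i P_i$ into indecomposable $S(V,n)$-modules (each projective--injective with simple socle), selects a sub-sum $P=\bigoplus_{I'}P_i$ on whose socle $\bar\beta_n$ restricts to an isomorphism onto $\bar\beta_n(\Soc(V^{\otimes n}))$, observes that $\bar\beta_n|_P$ is then a split monomorphism into $L_n(V)$ (by injectivity of $P$), and only \emph{then} invokes (4) on the $T_n$-projective submodule $\bar\beta_n(P)$. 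Neither step uses Dynkin--Specht--Wever.
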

\textbf{Note.} By assertion (5), the functor $L_n^{\max}$ and the module $L_n^{\max}(V)$ are determined by evaluating the map $\bar\beta_n$ or $\beta_n$ on simple $\bfk(\GL(V))$-submodules of $V^{\otimes n}$.
\begin{proof}
Since $L_n$ is a quotient functor of $T_n$, assertion (1) is direct consequence of Proposition~\ref{proposition3.8}. Assertion (2) follows from (1) immediately. Assertion (4) is a direct consequence of 3. The proof of assertion (6) is similar to that of assertion (5).

For proving assertion (3), let $M$ be a $T_n$-projective $\bfk(\GL(V))$-submodule of $L_n(V)$. According to~\cite[p.94]{Donkin}, $T_n(V)$ is an injective module over $S(V,n)$ and so is $M$ because $M$ is a summand of a direct sum of finite copies of $T_n$. Thus the inclusion
$$
j\colon M \hookrightarrow L_n(V) \hookrightarrow T_n(V)
$$
admits a $\bfk(\GL(V))$-retraction $r\colon T_n(V)\to M$. The composite $$e=j\circ r\colon T_n(V)\to T_n(V)$$ is an idempotent in
$$
\End_{\bfk(\GL(V))}(T_n(V)).
$$
The natural transformation $\alpha=\Theta^{e}\colon T_n\to T_n$ is an idempotent. Let $B=\mathrm{Im}(\alpha)$. Then $B$ is a $T_n$-projective subfunctor of $L_n$ with $B(V)=M$ and hence assertion (3).

(5). Let
$$
V^{\otimes n}=\bigoplus_{i\in I} P_i
$$
be a decomposition over $S(V,n)$ such that each $P_i$ is indecomposable. The map $\bar\beta_n\colon V^{\otimes n}\to L_n(V)$ induces a map
$$
\bar\beta_n\colon \mathrm{Soc}(V^{\otimes n}=\bigoplus_{i\in I} \mathrm{Soc}(P_i)\longrightarrow \mathrm{Soc}(L_n(V)).
$$
Note that each indecomposable $S(V,n)$-summand of $V^{\otimes n}$ has a unique socle. (See for instance~\cite[(6.4b)]{Green}.
Thus there exists $I'\subseteq I$ such that
$$
P=\bigoplus_{i\in I'}P_i
$$
has the property that
$$
\bar\beta_n| \colon \mathrm{Soc}(P)\longrightarrow \bar\beta_n(\mathrm{Soc}(V^{\otimes n}))
$$
is an isomorphism. It follows that
$$
\bar\beta_n|\colon P\longrightarrow L_n(V)
$$
is a monomorphism because it restricts to the socle is a monomorphism. Since $P$ is an injective $S(V,n)$-module, the map $\bar \beta_n|_P$ has a retraction. Thus $T_n$-projective $S(V,n)$-module $P$ is isomorphic to a $S(V,n)$-summand of $L_n(V)$. From the maximum property of $L^{\max}(n)$, $P$ is isomorphic to a $S(V,n)$-summand of $L^{\max}_n(V)$. In particular,
$$
\bar\beta_n(\mathrm{Soc}(V^{\otimes n}))=\bar\beta_n|(\mathrm{Soc}(P))\subseteq \mathrm{Soc}(L^{\max}_n(V)).
$$
On the other hand, since $L^{\max}_n(V)$ is $S(V,n)$-projective, the inclusion $$j\colon L^{\max}_n(V)\hookrightarrow L_n(V)$$ admits a $S(V,n)$-lifting $\tilde j\colon L^{\max}_n(V)\to V^{\otimes n}$ such that $j=\bar\beta\circ\tilde j$. Thus
$$
\mathrm{Soc}(L^{\max}_n(V))\subseteq \bar\beta_n(\mathrm{Soc}(V^{\otimes n})).
$$
Note that the inclusion $i\colon L_n(V)\hookrightarrow T_n(V)$ induces a monomorphism $i|\colon \mathrm{Soc}(L_n(V))\hookrightarrow \mathrm{Soc}(T_n(V))$. Thus
$$
\bar\beta_n(\mathrm{Soc}(V^{\otimes n}))=\beta_n(\mathrm{Soc}(V^{\otimes n}))
$$
and hence the result.
\end{proof}

We give a remark that if $\dim V<n$, then assertions (3) and (4) are not true by the following example.

\begin{example}\label{example3.10}
Let $\bfk$ be of characteristic $3$ and let $V$ be a $2$-dimensional module with a basis $\{u,v\}$. Then the canonical map
$$
f\colon L_2(V)\otimes V \longrightarrow L_3(V)\ [a_1,a_2]\otimes a_3\mapsto [[a_1,a_2],a_3]
$$
is an isomorphism of modules over $\bfk(\GL_2(\bfk))$. Since $L_2(V)$ is a $\bfk(\GL(V))$-summand of $V^{\otimes 2}$, $L_2(V)\otimes V$ is $T_3$-projective. Thus $L_3(V)$ is $T_3$-projective. On the other hand, it is easy to see that the functor $L^{\max}_3=0$ and so $L^{\max}_3(V)=0$.

In this case, $L_3(V)$ is not an injective $S(V,3)$-module. In fact, the inclusion $L_3(V)\hookrightarrow V^{\otimes 3}$ does not have $S(V,3)$-retraction by expecting the Steenrod module structure on $V^{\otimes 3}$. Also it is easy to check that $L_3(V)$ is not a projective $S(V,3)$-module.\hfill $\Box$
\end{example}

We call $M\subseteq L_n(V)$ \textit{functorial $T_n$-projective} if there exists a $T_n$-projective subfunctor $Q$ of $L_n$ such that $M=Q(V)$. (\textbf{Note.} Here we require that $Q(V)$ is strictly equal to $M$ rather than isomorphic to $M$.)

\begin{prop}\label{proposition3.12}
Assume that the ground field $\bfk$ has infinite elements. Let $V$ be any $\bfk$-module and let $M$ be a $\bfk(\GL(V))$-submodule of $L_n(V)$. Then $M$ is functorial $T_n$-projective if and only if $M$ satisfies the following two conditions:
\begin{enumerate}
\item[1)] There exists a $\bfk(\GL(V))$-linear map $r\colon V^{\otimes n}\to M$ such that $r|_M$ is the identity.
\item[2)] The inclusion $M\hookrightarrow L_n(V)$ admits the following lifting
\begin{diagram}
& &V^{\otimes n}\\
&\ruDashto &\dOnto>{\bar\beta_n}\\
M&\rInto& L_n(V).\\
\end{diagram}
as modules over $\bfk(\GL(V))$.
\end{enumerate}
\end{prop}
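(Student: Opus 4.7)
\emph{Forward direction.} If $M=Q(V)$ for a $T_n$-projective subfunctor $Q$ of $L_n$, then $Q$ is a subfunctor of $T_n$ and Proposition~\ref{proposition2.10}(4) makes it both projective and injective in the category $\mathcal{C}$ of sub-quotient functors of direct sums of finite copies of $T_n$. Injectivity of $Q$ applied to the natural monomorphism $Q\hookrightarrow T_n$ extends $\id_Q$ to a natural retraction $R\colon T_n\to Q$, whose evaluation at $V$ gives the $\bfk(\GL(V))$-retraction $r=R_V$ required by~(1). Projectivity of $Q$ applied to the natural epimorphism $\bar\beta_n\colon T_n\twoheadrightarrow L_n$ lifts the inclusion $Q\hookrightarrow L_n$ to a natural transformation $\widetilde J\colon Q\to T_n$ with $\bar\beta_n\circ\widetilde J$ equal to the inclusion; evaluating at $V$ gives the $\bfk(\GL(V))$-linear lift $\tilde j=\widetilde J_V$ required by~(2).

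\emph{Backward direction, preparation.} Conversely, assume (1) and (2). Set $\rho=r\circ i\colon L_n(V)\to M$, where $i\colon L_n(V)\hookrightarrow V^{\otimes n}$. Then $\rho\circ j_0=r\circ i\circ j_0=r|_M=\id_M$, so $M$ is a $\bfk(\GL(V))$-summand of $L_n(V)$ and $f=j_0\circ\rho\colon L_n(V)\to L_n(V)$ is a $\bfk(\GL(V))$-linear idempotent with $\Image(f)=M$. Moreover $g=\tilde j\circ\rho\colon L_n(V)\to V^{\otimes n}$ satisfies $\bar\beta_n\circ g=j_0\circ\rho=f$, so $g$ is a $\bfk(\GL(V))$-linear lift of $f$ through $\bar\beta_n$.

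\emph{Backward direction, case $\dim V\ge n$.} Under this hypothesis, Theorem~\ref{theorem3.9}(1) identifies $\End(L_n)\cong\End_{\bfk(\GL(V))}(L_n(V))$ as rings, so $f$ is the $V$-value of a natural transformation $\delta\colon L_n\to L_n$ which is automatically idempotent by the ring isomorphism. Similarly Proposition~\ref{proposition3.8} applied with $A=L_n$ and $B=T_n$ identifies $g$ with a natural transformation $\widetilde\delta\colon L_n\to T_n$, and the relation $\bar\beta_n\circ g=f$ forces $\bar\beta_n\circ\widetilde\delta=\delta$. Put $Q=\Image(\delta)\subseteq L_n$; then $Q(V)=\Image(f)=M$, and the identity $(\delta\circ\bar\beta_n)\circ(\widetilde\delta|_Q)=\delta\circ\delta|_Q=\id_Q$ exhibits $Q$ as a natural summand of the free $T_n$-functor $T_n$, hence $T_n$-projective.

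\emph{Main obstacle.} The chief technical difficulty is to drop the hypothesis $\dim V\ge n$, under which Theorem~\ref{theorem3.9}(1) and Proposition~\ref{proposition3.8} are unavailable. My plan is to embed $V$ as a direct summand of $W=V\oplus\bar V_n$ (so $\dim W\ge n$), enlarge $M$ to the $\bfk(\GL(W))$-submodule $M_W\subseteq L_n(W)$ generated by the image of $M$, extend $r$ and $\tilde j$ to $\bfk(\GL(W))$-linear maps $W^{\otimes n}\to M_W$ and $M_W\to W^{\otimes n}$ that verify the analogs of~(1) and~(2) for $M_W$, and apply the preceding paragraph at $W$ to obtain a $T_n$-projective subfunctor $Q\subseteq L_n$ with $Q(W)=M_W$. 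Proposition~\ref{proposition2.10}(1) then gives $Q(V)=\gamma_n(Q)\otimes_{\bfk(\Sigma_n)}V^{\otimes n}$ as a natural summand of $Q(W)$, and tracking the idempotent structure should recover $Q(V)=M$ on the nose. The technical heart is the construction of $M_W$ and the verification that the extended data still satisfies~(1) and~(2) at~$W$.
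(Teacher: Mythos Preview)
Your forward direction matches the paper's. For the converse, your argument in the case $\dim V\ge n$ is correct, but the case distinction is unnecessary, and your proposed reduction for $\dim V<n$ is a genuine gap: there is no evident way to produce a $\bfk(\GL(W))$-submodule $M_W\subseteq L_n(W)$ together with $\bfk(\GL(W))$-linear extensions of $r$ and $\tilde j$ satisfying (1) and (2) at $W$, and you give no construction. The obstacle is real---maps that are merely $\bfk(\GL(V))$-linear have no canonical promotion to $\bfk(\GL(W))$-linear maps.

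The paper sidesteps the dimension hypothesis by lifting at the level of $T_n$ rather than $L_n$. The composite $\tilde j\circ r\colon V^{\otimes n}\to V^{\otimes n}$ is a $\bfk(\GL(V))$-endomorphism of $V^{\otimes n}$, and Schur--Weyl duality gives that
\[
\bfk(\Sigma_n)=\Hom(T_n,T_n)\longrightarrow \End_{\bfk(\GL(V))}(V^{\otimes n})
\]
is \emph{surjective} for every $V$ over an infinite field (no bound on $\dim V$ is needed for surjectivity, only for injectivity). Hence there is a natural transformation $\alpha\colon T_n\to T_n$ with $\alpha_V=\tilde j\circ r$. Put $\theta=\beta_n\circ\alpha$, where $\beta_n=i\circ\bar\beta_n$. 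A direct computation using (1) and (2) shows $\theta_V=i\circ j\circ r$ is idempotent with image $M$; however $\theta$ itself need not be idempotent as a natural transformation, since the lift $\alpha$ is not unique when $\dim V<n$. The paper resolves this by a stabilisation argument: the dimensions $\dim\mathrm{Im}(\gamma_n(\theta^t))$ are weakly decreasing, so for some $k\gg0$ the functor $Q=\mathrm{Im}(\theta^k)$ coincides with $\colim_{\theta}T_n$ and is a natural retract of $T_n$, hence $T_n$-projective; it lies in $L_n$ since $\theta$ factors through $\beta_n$. Evaluating at $V$ and using that $\theta_V$ is already idempotent gives $Q(V)=\mathrm{Im}(\theta_V)=M$ on the nose.

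In short: replace the ring \emph{isomorphism} $\End(L_n)\cong\End_{\bfk(\GL(V))}(L_n(V))$ (which requires $\dim V\ge n$) by the \emph{surjection} $\bfk(\Sigma_n)\twoheadrightarrow\End_{\bfk(\GL(V))}(V^{\otimes n})$ (which holds always), and compensate for the loss of uniqueness by a colimit/telescope argument rather than a direct idempotent lift.
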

\begin{proof}
Suppose that $M$ is functorial $T_n$-projective. Let $Q$ be a subfunctor of $L_n$ with $Q(V)=M$. The inclusion
$$
Q\rInto L_n\rInto T_n
$$
admits a natural retraction because $\gamma_n(Q)$ is injective. By evaluating at $V$, condition 1 is satisfied. Since $\gamma_n(Q)$ is projective, there a natural lifting $\tilde j\colon Q\to T_n$ such that $\beta_n\circ \tilde j$ is the inclusion of $Q$ in $L_n$ and so condition 2 is satisfied by evaluating at $V$.

Conversely suppose that $M$ satisfies conditions 1 and 2. Let $j\colon M \hookrightarrow L_n(V)$ and $ L_n(V)\hookrightarrow V^{\otimes n}$ be the inclusions. Let $\tilde j\colon M\to V^{\otimes n}$ be a $\bfk(\GL(V))$-map such that $\beta_n\circ \tilde j=j$. By the Schur-Weyl duality, the map
$$
\bfk(\Sigma_n)=\Hom(T_n,T_n)\longrightarrow \End_{\bfk(\GL(V))}(V^{\otimes n})
$$
is an epimorphism. There exists a natural transformation $\alpha\colon T_n\to T_n$ such that $\alpha_V=\tilde j\circ r$.
Let $\theta=\beta_n\circ \alpha$. Consider the colimit of the sequence
$$
T_n\rTo^{\theta} T_n\rTo^{\theta} T_n\rTo\cdots,
$$
there exists $k>>0$ such that
$$
Q=\mathrm{Im}(\theta^k)\longrightarrow \colim_{\theta} T_n
$$
is an isomorphism because, by taking $\gamma_n(-)$ to the above sequence, the submodules $\mathrm{Im}(\gamma_n(\theta^t))$ of $\gamma_n(T_n)$ has the monotone decreasing in dimensions:
$$
\dim \mathrm{Im}(\gamma_n(\theta))\geq \dim \mathrm{Im}(\gamma_n(\theta^2))\geq \dim \mathrm{Im}(\gamma_n(\theta^3))\geq\cdots.
$$
Since $Q=\beta_n(\alpha\theta^{k-1}(T_n))$, $Q$ is a $T_n$-projective subfunctor of $L_n$. By evaluating at $V$, we check that $Q(V)=M$. Since
$$
\begin{array}{rcl}
\theta_V\circ\theta_V&=&\beta_n\circ\alpha_V\circ \beta_n\circ\alpha_V\\
&=&i\circ\bar\beta_n \circ \tilde j\circ r\circ i\circ\bar\beta_n \circ \tilde j\circ r\\
&=&i\circ j\circ r\circ i\circ j\circ r\\
&=& i\circ j\circ r\\
&=& \theta_V,\\
\end{array}
$$
the map $\theta_V$ is an idempotent. It follows that
$$
Q(V)=\mathrm{Im}(\theta_V)=i\circ j\circ r(V^{\otimes n})=M
$$
and hence the result.
\end{proof}

\section{Coalgebra Structure on Tensor Algebras}\label{section4}
In this section, the tensor algebra $T(V)$ admits the comultiplication
$$
\psi\colon T(V)\longrightarrow T(V)\otimes T(V)
$$
described in subsection~\ref{subsection2.1}.
\subsection{Changing Ground-rings}
Some results in the representation theory help us to change the
ground ring. Let $\Z_{(p)}$ be the $p$-local integers. By modular
representation theory of symmetric groups (see, for example
~\cite[Exercise 6.16, p.142]{CR}), any idempotent in
$(\Z/p)(\Sigma_n)$ lifts to an idempotent in $\Z_{(p)}(\Sigma_n)$.
It is well-known ~\cite{James} that any irreducible modules $M$ over
$\Z/p(\Sigma_n)$ is absolutely irreducible, that is, for any
extension field $\bfk$, $M\otimes\bfk$ is irreducible over
$\bfk(\Sigma_n)$. Thus there is a one-to-one correspondence between
idempotents in $\Z/p(\Sigma_n)$ and the idempotents in
$\bfk(\Sigma_n)$.

Let $R$ be any commutative ring with identity. Consider $T\colon
V\mapsto T(V)$ as the functor from projective $R$-modules to
coalgebras over $R$. Denote by $\coalg^R(T,T)$ the set of self
natural coalgebra transformation of $T$. Let $\bfk$ be any field of
characteristic $p$. We have the canonical functions
$$
R\colon \coalg^{\Z_{(p)}}(T,T)\rTo\coalg^{\Z/p}(T,T)
$$
by modulo $p$ and
$$
K\colon \coalg^{\Z/{p}}(T,T)\rTo \coalg^{\bfk}(T,T)
$$
by tensoring with $\bfk$ over $\Z/p$. By~\cite[Corollary 6.9]{SW1},
there is a one-to-one correspondence between natural indecomposable
retract of $T$ over $\bfk$ and the indecomposable
$\bfk(\Sigma_n)$-projective submodule of $\Lie(n)$ for $n\geq
1$. Thus we have the following:
\begin{prop}\label{proposition4.1}
The functions $R$ and $K$ have the following properties:
\begin{enumerate}
\item The map $
R\colon \coalg^{\Z_{(p)}}(T,T)\rTo\coalg^{\Z/p}(T,T)$ induces a
one-to-one correspondence betweens idempotents. Thus every natural
coalgebra decompositions of $T$ over $\Z/p$ lifts to a natural
coalgebra decomposition over $\Z_{(p)}$.
\item  The map $
K\colon \coalg^{\Z_{p}}(T,T)\rTo\coalg^{\bfk}(T,T)$ induces a
one-to-one correspondence betweens idempotents. Thus natural
coalgebra decompositions of $T$ only depends on the characteristic
of the ground field.\hfill $\Box$
\end{enumerate}
\end{prop}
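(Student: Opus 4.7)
The plan is to reduce both assertions to facts about idempotents in the endomorphism rings $\End_{R(\Sigma_n)}(\Lie_R(n))$ for $R\in\{\Z_{(p)},\,\Z/p,\,\bfk\}$, and then invoke the symmetric-group representation theory already recalled in the paragraph preceding the proposition.

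My starting point is \cite[Corollary 6.9]{SW1}, cited in the text, which identifies natural coalgebra self-transformations of $T$ — and in particular idempotents in $\coalg^R(T,T)$ — with compatible families of $R(\Sigma_n)$-equivariant maps on the Lie representations $\Lie_R(n)$, $n\geq 1$. Under this identification, $R$ becomes reduction mod $p$ on $\bigoplus_n \End_{\Z_{(p)}(\Sigma_n)}(\Lie_{\Z_{(p)}}(n))$, and $K$ becomes base change $-\otimes_{\Z/p}\bfk$ on $\bigoplus_n \End_{(\Z/p)(\Sigma_n)}(\Lie_{\Z/p}(n))$. Thus the proposition reduces to showing that each of these two ring maps induces a bijection on idempotents.

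For part (1), surjectivity amounts to lifting idempotents in $\End_{(\Z/p)(\Sigma_n)}(\Lie_{\Z/p}(n))$ along reduction mod $p$. I would derive this from the quoted fact that idempotents in $(\Z/p)(\Sigma_n)$ lift to $\Z_{(p)}(\Sigma_n)$ by realizing the endomorphism ring of $\Lie_R(n)$ as a corner $eAe$ of the group algebra cut out by a suitable primitive idempotent, and transferring the lifting through this identification. Injectivity at the level of decompositions follows from the standard fact that two lifts of the same mod-$p$ idempotent are inner conjugate, and conjugate idempotents induce isomorphic coalgebra decompositions of $T$. Part (2) then follows from absolute irreducibility of simple $(\Z/p)(\Sigma_n)$-modules \cite{James}: this forces the natural map
$$
\End_{(\Z/p)(\Sigma_n)}(\Lie_{\Z/p}(n))\otimes_{\Z/p}\bfk \longrightarrow \End_{\bfk(\Sigma_n)}(\Lie_{\bfk}(n))
$$
to be an isomorphism with unchanged Wedderburn block structure, so primitive idempotents correspond bijectively under $K$.

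The main obstacle I anticipate is not in the representation-theoretic input, which is already quoted in the paper, but in the bookkeeping at the level of the functor $T$ rather than of an individual $\Lie(n)$: one has to check that a collection of lifted or extended idempotents, one at each tensor degree, assembles coherently into a natural coalgebra self-transformation of $T$, and that two such families yield the same natural coalgebra decomposition of $T$ precisely when they are equivalent on every $\Lie(n)$. Once the functor-to-symmetric-group-representation dictionary from \cite[Corollary 6.9]{SW1} is carefully set up in both the integral and the field-theoretic settings, the remaining content of the proposition becomes a formal consequence of the two group-algebra facts stated immediately before it.
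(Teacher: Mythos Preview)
Your proposal is correct and follows essentially the same approach as the paper: the paper presents Proposition~4.1 with a $\Box$ and no separate proof, treating it as an immediate consequence of the paragraph preceding it, which (i) cites idempotent lifting from $(\Z/p)(\Sigma_n)$ to $\Z_{(p)}(\Sigma_n)$, (ii) cites absolute irreducibility of simple $(\Z/p)(\Sigma_n)$-modules, and (iii) invokes \cite[Corollary~6.9]{SW1} to pass between coalgebra retracts of $T$ and $\bfk(\Sigma_n)$-projective submodules of $\Lie(n)$. Your outline is exactly this, with the added (and reasonable) observation that the coherence of the degree-by-degree idempotents into a single coalgebra self-map of $T$ is where the actual work hides; the paper suppresses this bookkeeping entirely.
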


By this proposition, we can freely change the ground fields $\bfk$
with the same characteristic and lift natural coalgebra
decompositions over $p$-local integers if it is necessary.

\subsection{Block Decompositions}

From now on in this section, the ground field $\bfk$ is algebraically closed with
$\mathrm{char}(\bfk)=p$.  For any coalgebra $C$, let $PC$ be the set of the primitives of $C$. If $C$ is a functor from modules to coalgebras, then $PC$ is a functor from modules to modules. Recall from Proposition~\ref{proposition2.3} that any subfunctor of $T$ is a graded subfunctor. Thus if $C$ is sub-quotient coalgebra functor of $T^C$, then $C$ is graded and so we have the homogenous functors $C_n$ and $P_nC=PC\cap C_n$ for each $n$. For the case $C=T$, $PT(V)=L^{\res}(V)$ is the free restricted Lie algebra generated by $V$ and $P_nT=L^{\res}_n$ for each $n$.

For natural transformations $f,g\colon T\to T$, the \textit{convolution product $f\ast g$} is defined by the composite
$$
T(V)\rTo^{\psi} T(V)\otimes T(V)\rTo^{f\otimes g} T(V)\otimes T(V)\rTo^{\mathrm{multi.}} T(V).
$$
If $f$ and $g$ are natural coalgebra transformation, clearly $f\ast g$ is also a natural coalgebra transformation. For any element $\zeta\in\bfk$, define $\lambda_\zeta\colon T(V)\to T(V)$ by setting
\begin{equation}\label{equation4.1}
\lambda_{\zeta}(a_1\cdots a_n)=\zeta^n a_1\cdots a_n
\end{equation}
for $a_1,\ldots,a_n\in V$. In other words, $\lambda_{\zeta}\colon T(V)\to T(V)$ is the (unique) Hopf map such that  $\lambda_{\zeta}(a)=\zeta a$ for $a\in V$. Let $\chi\colon T(V)\to T(V)$ be the conjugation of the Hopf algebra $T(V)$, namely $\chi$ is the anti-homomorphism such that $\chi(a)=-a$ for $a\in V$. More precisely,
\begin{equation}\label{equation4.2}
\chi(a_1\cdots a_n)=(-1)^{n}a_na_{n-1}\cdots a_1
\end{equation}
for $a_1,\ldots,a_n\in V$. For any element $\zeta\in\bfk$, we have the natural coalgebra transformation
\begin{equation}\label{equation4.3}
\theta_{\zeta}=\lambda_\zeta\ast\chi\colon T(V)\to T(V).
\end{equation}
If $\alpha\in P_nT(V)$, then
\begin{equation}\label{equation4.4}
\theta_{\zeta}(\alpha)=(\zeta^n-1)\alpha
\end{equation}
by the definition of convolution product because $\psi(\alpha)=\alpha\otimes 1+1\otimes\alpha$. For general monomials in $T_n(V)$, it is straightforward to have the formula
\begin{equation}\label{equation4.5}
\theta_{\zeta}(a_1\cdots a_n)=\sum
_{
\begin{array}{c}
\sigma(1)<\cdots<\sigma(k)\\
\sigma(k+1)<\cdots<\sigma(n)\\
\sigma\in\Sigma_n\\
0\leq k\leq n\\
\end{array}}
\left(\zeta^k+(-1)^{n-k}\right) a_{\sigma(1)}\cdots a_{\sigma(k)}a_{\sigma(k+1)}\cdots a_{\sigma(n)}.
\end{equation}
The maps $\theta_\zeta$ are useful for obtaining natural coalgebra decompositions of $T(V)$.

\begin{thm}\label{theorem4.2}
Let the ground ring $\bfk$ be a field of characteristic $p$. Then there exists a natural coalgebra decomposition
$$
T(V)\cong C(V)\otimes D(V)
$$
for any module $V$ with the property that
$$
PC_n=\left\{
\begin{array}{lcl}
0&\textrm{ if }& n \textrm{ is not a power of } p,\\
P_nT &\textrm{ if } & n=p^r \textrm{ for some } r.\\
\end{array}\right.
$$
\end{thm}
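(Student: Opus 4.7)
The plan is to construct the decomposition using the natural coalgebra transformations $\theta_\zeta$ from equation~(\ref{equation4.3}), combined with a Fitting decomposition argument. Since $\bfk$ is algebraically closed of characteristic $p$, $\bfk$ has no nontrivial $p$-power roots of unity, but for every integer $m>1$ with $\gcd(m,p)=1$ there is a primitive $m$th root of unity $\zeta_m\in\bfk$. For each such $m$, set $f_m:=\theta_{\zeta_m}$. By equation~(\ref{equation4.4}), $f_m$ acts on $P_nT(V)$ by multiplication by $\zeta_m^n-1$, which vanishes iff $m\mid n$. Since a positive integer $n$ is a power of $p$ iff no $m>1$ coprime to $p$ divides $n$, the family $\{f_m\}$ collectively distinguishes primitives of power-of-$p$ degree from the rest.

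For $V$ finite-dimensional, each $f_m$ restricts on the graded piece $T_n(V)$ to a linear endomorphism of a finite-dimensional space, giving a canonical Fitting decomposition $T_n(V)=I^{(m)}_n(V)\oplus K^{(m)}_n(V)$ into generalized nonzero- and zero-eigenspaces of $f_m$. The crucial structural point is that $\psi\colon T\to T\otimes T$ intertwines $f_m$ with $f_m\otimes f_m$. Since the Fitting isomorphism part of $T\otimes T$ under $f_m\otimes f_m$ equals $I^{(m)}\otimes I^{(m)}$, functoriality of the Fitting decomposition under intertwiners forces $\psi(I^{(m)})\subseteq I^{(m)}\otimes I^{(m)}$; thus $I^{(m)}:=\bigoplus_n I^{(m)}_n$ is a natural subcoalgebra of $T$, and the projection $\pi_m\colon T\to I^{(m)}$ along the Fitting splitting is a natural coalgebra retraction (because $\psi(K^{(m)})$ lies in the complement of $I^{(m)}\otimes I^{(m)}$, which is annihilated by $\pi_m\otimes\pi_m$). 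By construction $P_nT\cap I^{(m)}$ equals $P_nT$ when $m\nmid n$ and vanishes when $m\mid n$. The operators $f_m$ for varying $m$ commute with one another on $T$, so the $\pi_m$ commute, and their composite in each fixed degree (a finite product) is a natural coalgebra retraction $\pi\colon T\to C$ onto the natural subcoalgebra $C:=\bigcap_m I^{(m)}$, which has $PC=\bigoplus_{r\geq 0}P_{p^r}T$ as required.

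The final step is to upgrade the natural coalgebra retraction $T\to C$ to a tensor-product decomposition $T\cong C\otimes D$ of coalgebras. One defines $D$ symmetrically from the complementary Fitting kernels, and invokes the cocommutative Hopf structure $T(V)\cong U^{\res}(L^{\res}(V))$ together with a PBW-type argument: the natural graded vector-space splitting $L^{\res}(V)=W\oplus W'$ with $W=\bigoplus_{r\geq 0}L^{\res}_{p^r}(V)=PC$ and $W'$ its complement (so that $W'=PD$) exhibits the multiplication map $C\otimes D\to T$ as the desired coalgebra isomorphism. I expect the main obstacle to be exactly this last step: having $C$ as a natural coalgebra retract of $T$ is strictly weaker than a tensor-product coalgebra decomposition $T\cong C\otimes D$, and upgrading requires essential use of both the cocommutative Hopf structure on $T$ and the PBW-type identification $T\cong U^{\res}(L^{\res}(V))$. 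Verifying that the multiplication map $C\otimes D\to T$ respects the full comultiplication, rather than merely producing a vector-space splitting of $T$, is where the most technical care lies.
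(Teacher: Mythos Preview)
Your Fitting-decomposition approach for a single $m$ is essentially equivalent to the paper's colimit construction $\colim_{\theta_{\zeta_m}} T$ (the paper invokes \cite[Theorem~4.5]{SW1} to realize the colimit as a subcoalgebra retract), so that part is fine. But two later steps break down. First, the claim that the $f_m$ commute on $T$ is unjustified: on $T_n$ each $\theta_{\zeta_m}$ is a specific element of $\bfk(\Sigma_n)$ (a polynomial in $\zeta_m$ whose coefficients are signed sums of shuffle-then-reverse permutations, cf.\ equation~(\ref{equation4.5})), and there is no evident reason these pairwise commute in the group algebra. The paper sidesteps this entirely by an iterative construction: at stage $k{+}1$ it precomposes $\theta_{\zeta_{m_{k+1}}}$ with the retraction onto the previously built $C(k)$ and takes the stable image of \emph{that} composite, so commutativity is never needed; a separate argument using the quasi-Hopf structure on each $C(k)$ then shows the chain $C(k)_n$ stabilizes in each fixed degree $n$.

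Second, and more seriously, your PBW upgrade from ``$C$ is a coalgebra retract'' to ``$T\cong C\otimes D$ as coalgebras'' does not work as written. You set $W=\bigoplus_{r\geq 0}L^{\res}_{p^r}(V)$ and propose to treat $C$ as something like $U^{\res}(W)$, but $W$ is \emph{not} a restricted Lie subalgebra of $L^{\res}(V)$: already $[V,V]\subseteq L_2$, and $2$ is not a power of $p$ for $p$ odd (for $p=2$ use $[L_2,V]\subseteq L_3$). So $U^{\res}(W)$ is not defined, and in any case your $C$ is a sub\emph{coalgebra} (a Fitting image), not a subalgebra. The restricted multiplication $C\otimes D\to T$ is indeed a coalgebra map, but you have given no argument that it is bijective. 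The paper obtains the tensor decomposition by a different route: once $C$ is known to be a natural coalgebra retract of $T$, \cite[Lemma~5.3]{SW1} supplies a natural coalgebra isomorphism $T\cong C\otimes(\bfk\,\Box_{C}\,T)$ via the cotensor product. That structural lemma is the ingredient you are missing.
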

\textbf{Note.} From the decomposition, we have $P_nD=0$ if $n$ is a power of $p$ and $P_nD=P_nT$ if $n$ is not a power of $p$. The theorem gives a decomposition that one can put all primitives of tensor length of powers of $p$ in a one coalgebra factor and put the rest primitives in another coalgebra factor.
\begin{proof}
Let $\{m_1<m_2<m_3<\cdots\}$ be the set of all positive integers prime
to $p$ excluding $1$ and let $\zeta_{m_i}$ be a primitive $m_i$th
root of $1$. We are going to construct by induction a sequence of sub coalgebra functor $C(k)$ of $T$, with the inclusion denoted by $j_k\colon C(k) \hookrightarrow T$, and a sequence of quotient coalgebra functor $q_k\colon T\to E(k)$ with the following properties:
\begin{enumerate}
\item[1)] $C(k+1)$ is a subfunctor $C(k)$ for each $k\geq 0$.
\item[2)] There exists a coalgebra natural transformation $q'_k\colon E(k)\to E(k+1)$ such that $q_{k+1}=q'_k\circ q_k$ for each $k\geq 0$.
\item[3)] The composite $q_k\circ j_k\colon C(k)\to E(k)$ is a natural isomorphism.
\item[4)] The primitives $P_nC(k)=0$ if $n$ is divisible by one of $m_1,m_2,\ldots,m_k$.
\item[5)] The primitive $P_nC(k)=P_nT$ if $n$ is not divisible by any of $m_1,m_2,\ldots,m_k$.
\end{enumerate}
Let $C(0)=E(0)=T$ and let $i_0=q_0=\id$. The construction of $C(1)$ and $E(1)$ is as follows. Let $E(1)=\colim_{\theta_{\zeta_{m_1}}}T$ be the colimit of the sequence of coalgebra natural transformation
$$
T\rTo^{\theta_{\zeta_{m_1}}} T\rTo^{\theta_{\zeta_{m_1}}} T\rTo\cdots.
$$
Let $q_1\colon T\to E(1)$ be the map to its colimit. By~\cite[Theorem 4.5]{SW1}, there exists a sub coalgebra functor $C(1)$ of $T$, with the inclusion denoted by $j_1\colon C(1)\to T$, such that $q_1\circ i_1$ is a natural isomorphism. From Equation~\ref{equation4.4},
$$
\theta_{\zeta_{m_1}}\colon P_nT\to P_nT
$$
is zero if $m_1|n$ and an isomorphism if $m_1\nmid n$. Thus $P_nE(1)=\colim_{\theta_{\zeta_{m_1}}}P_nT=0$ if $m_1|n$ and
$$
q_1\colon P_nT\longrightarrow P_nE(1)
$$
is an isomorphism if $m_1 \nmid n$. Since $C(1)\cong E(1)$, conditions (4) and (5) holds. Now suppose that we have construct $C(j)$ and $E(j)$ satisfying conditions (1)-(5) for $j\leq k$. Let $f\colon T\to T$ be the composite
$$
T\rOnto^{q_k}E_n\rTo^{(q_k\circ j_k)^{-1}}_{\cong} C(k)\rInto^{j_k}T \rTo^{\theta_{\zeta_{m_{k+1}}}}T
$$
and let $E(k+1)=\colim_{f}T$. Let $q_{k+1}\colon T\to E(k+1)$ be the canonical map to its colimit. Notice that
$$
q_{k+1}\circ f=q_{k+1}\colon T\longrightarrow E(k+1).
$$
Let $q'_k=q_{k+1}\circ j_k\circ (q_k\circ j_k)^{-1}$. Then $q_{k+1}=q'_k\circ q_k$ and so condition (2) satisfies. Since $f$ factors through the subfunctor $C(k)$, there exists a subfunctor $C(k+1)$ of $C(k)$, with the inclusion into $T$ denoted by $j_{k+1}$, such that $q_{k+1}\circ j_{k+1}$ is a natural isomorphism. Hence we have conditions (1) and (3). Let $\alpha\in P_nT(V)$. Then
$$
f(\alpha)=\theta_{\zeta_{m_{k+1}}}((j_k\circ(q_k\circ j_k)^{-1}\circ q_k)(\alpha))=(\zeta_{m_{k+1}}^n-1)((j_k\circ(q_k\circ j_k)^{-1}\circ q_k)(\alpha).
$$
Thus $f(\alpha)=0$ if $n$ is divisible by one of $m_1,\ldots,m_{k+1}$ and
$$
f\colon P_nT\longrightarrow P_nT
$$
is an isomorphism if $n$ is not divisible by any of $m_1,\ldots,m_{k+1}$. It follows that $P_nE(k+1)=0$ if $n$ is divisible by one of $m_1,\ldots,m_{k+1}$ and
$$
q_{k+1}\colon P_nT\longrightarrow P_nE(k+1)
$$
is an isomorphism if $n$ is not divisible by any of $m_1,\ldots,m_{k+1}$. Since $C(k+1)\cong E(k+1)$, we have conditions (4) and (5). The induction is finished.

Now let
$$
C=\bigcap_{k=0}^\infty C(k)
$$
be the intersection of the subfunctors $C(k)$ of $T$ and let $E(\infty)$ be the colimit of the sequence
$$
T\rOnto^{q_1}E(1)\rOnto^{q'_2}E(2)\rOnto^{q'_3}E(3)\rOnto\cdots.
$$
From condition (3), each $C(k)$ is coalgebra retract of $T$ and so each $C(k)$ is a functor from modules to coassociative and cocommutative quasi-Hopf algebras with the multiplication on $C(k)$ given by
$$
C(k)\otimes C(k)\rInto T\otimes T\rTo^{\mathrm{multi}} T\rOnto C(k),
$$
where we use the notation of quasi-Hopf algebra given in~\cite{MM}. By conditions (1)-(3), $C(k+1)$ is a coalgebra retract of $C(k)$ and so there is a natural coalgebra decomposition
$$
C(k)\cong C(k+1)\otimes C'(k)
$$
by~\cite[Lemma 5.3]{SW1}. From conditions (4) and (5), $P_nC(k+1)=P_nC(k)$ for $n<m_{k+1}$ and so $P_nC'(k)=0$ for $n<m_{k+1}$. It follows that
$$
C'(k)_n=0
$$
for $0<n<m_{k+1}$. Thus
$$
C(k+1)_n=C(k)_n
$$
for $n<m_{k+1}$ and from conditions (1)-(3),
$$
q_k\colon E(k)_n\longrightarrow E(k+1)_n
$$
is an isomorphism for $n<m_{k+1}$. Notice that the integers $m_k\to\infty$ as $k\to\infty$. Let $n$ be a fixed positive integer. For the integers $k$ with $m_k>n$, we have $C_n=C(k)_n$ and
$$
E(k)_n\rTo^{q'_k}_{\cong} E(k+1)_n\rTo^{q'_{k+1}}_{\cong} E(k+2)_n\rTo.
$$
It follows that the composite
$$
C_n=C(k)_n \rInto^{j_k} T_n\rOnto^{q_k} E(k)_n\rTo E(\infty)_n
$$
is an isomorphism. Thus the composite
$$
C\rInto T\rTo E(\infty)
$$
is an isomorphism and so $C$ is a coalgebra retract of $T$. This gives a natural coalgebra decomposition
$$
T\cong C\otimes D
$$
for some coalgebra retract $D$ of $T$. From conditions (4) and (5), we have $P_nC=0$ if $n$ is not a power of $p$ and $PC_{p^r}=PT_{p^r}$ for $r\geq 0$. The proof is finished.
\end{proof}

\begin{thm}[Block Decomposition Theorem]\label{theorem4.3}
Let $\bfk$ be a field of characteristic $p$.
Let $\{m_i\}_{i\geq0}$ be the set of all positive integers prime to $p$
with the order that $m_0=1<m_1<m_2<\cdots$. Then there exist natural coalgebra retracts $C^{m_i}$ of $T$ with a natural coalgebra decomposition
$$
T(V)\cong\bigotimes_{i=0}^\infty C^{m_i}(V)
$$
such that
$$
P_nC^{m_i}=\left\{
\begin{array}{ccc}
P_nT &\textrm{ if } & n=m_ip^r \textrm{ for some } r\geq0,\\
0&&\textrm{ otherwise }.\\
\end{array}\right.
$$
\end{thm}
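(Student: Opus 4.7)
The plan is to iterate Theorem~\ref{theorem4.2}, peeling off one coalgebra factor $C^{m_k}$ at a time from a decreasing sequence of coalgebra retracts $D_k$ of $T$. The base case $k=0$ is Theorem~\ref{theorem4.2} itself, yielding $T\cong C^{m_0}\otimes D_1$ with $C^{m_0}$ satisfying the primitive condition of the statement and $P_nD_1=P_nT$ exactly when $n=m_jp^r$ for some $j\geq 1$. Inductively, suppose
\[
T\;\cong\;C^{m_0}\otimes C^{m_1}\otimes\cdots\otimes C^{m_{k-1}}\otimes D_k,
\]
where each $C^{m_i}$ satisfies the stated condition and $D_k$ is a natural coalgebra retract of $T$ with $P_nD_k=P_nT$ precisely in degrees $n=m_jp^r$ with $j\geq k$. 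I will then construct $D_k\cong C^{m_k}\otimes D_{k+1}$ by re-running the construction of the proof of Theorem~\ref{theorem4.2} inside $D_k$.

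Writing $\iota_k\colon D_k\hookrightarrow T$ and $\pi_k\colon T\twoheadrightarrow D_k$ for the retraction maps, define for each $\zeta\in\bfk$
\[
\theta^{D_k}_\zeta\;=\;\pi_k\circ\theta_\zeta\circ\iota_k\colon D_k\longrightarrow D_k,
\]
a natural coalgebra endomorphism. By equation~(\ref{equation4.4}) together with the fact that $\iota_k$ and $\pi_k$ preserve primitives in each degree, $\theta^{D_k}_\zeta$ acts on $P_nD_k$ by multiplication by $\zeta^n-1$. For each $j>k$ pick a primitive $m_j$-th root of unity $\zeta_{m_j}$: on $P_{m_\ell p^r}D_k$ with $\ell\geq k$ the scalar $\zeta_{m_j}^{m_\ell p^r}-1=\zeta_{m_j}^{m_\ell}-1$ vanishes exactly when $m_j\mid m_\ell$. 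The arithmetic crux is that $j>k$ forces $m_j>m_k$, hence $m_j\nmid m_k$, so $\theta^{D_k}_{\zeta_{m_j}}$ restricts to an isomorphism on $P_{m_kp^r}D_k$ while annihilating $P_{m_jp^r}D_k$. Running the inductive colimit construction from the proof of Theorem~\ref{theorem4.2} inside $D_k$ along the sequence $\theta^{D_k}_{\zeta_{m_{k+1}}},\theta^{D_k}_{\zeta_{m_{k+2}}},\ldots$ and invoking the analogue of \cite[Theorem~4.5]{SW1} for the coalgebra functor $D_k$ produces a natural sub coalgebra functor $C^{m_k}\hookrightarrow D_k$ whose composite to the colimit is an isomorphism; by construction $P_nC^{m_k}=P_nT$ exactly when $n=m_kp^r$. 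The retract version of \cite[Lemma~5.3]{SW1} then yields the desired decomposition $D_k\cong C^{m_k}\otimes D_{k+1}$, with $D_{k+1}$ a coalgebra retract of $D_k$ (hence of $T$) whose primitives lie exactly in degrees $m_jp^r$ for $j>k$, completing the induction.

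The passage to the infinite tensor product is by a standard induction on degree via the coproduct: each $C^{m_i}$ is a connected cocommutative coalgebra functor whose primitives are supported in degrees $\geq m_i$, which forces $(C^{m_i})_n=0$ for $0<n<m_i$, and similarly $(D_{k+1})_n=0$ for $0<n<m_{k+1}$. Hence in every fixed degree $n$ only the factors $C^{m_i}$ with $m_i\leq n$ contribute non-trivially, so $\bigotimes_{i=0}^\infty C^{m_i}$ is well defined degree by degree. Choosing $k$ with $m_{k+1}>n$ forces $(D_{k+1})_{n'}=0$ for all $0<n'\leq n$, so the isomorphism $T\cong C^{m_0}\otimes\cdots\otimes C^{m_k}\otimes D_{k+1}$ restricts in degree $n$ to $T_n\cong\bigl(\bigotimes_{i=0}^\infty C^{m_i}\bigr)_n$, which assembles into the claimed natural coalgebra decomposition. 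The principal obstacle I foresee is verifying that \cite[Theorem~4.5 and Lemma~5.3]{SW1} apply verbatim with $D_k$ in place of $T$; this should be routine because $D_k$ is itself a natural coalgebra functor (indeed a quasi-Hopf retract of $T$), but if the cited statements are not stated at that level of generality, the entire construction can instead be carried out inside $T$ while conjugating every $\theta_\zeta$ by the fixed pair $(\iota_k,\pi_k)$ at each stage.
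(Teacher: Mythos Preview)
Your proposal is correct and follows essentially the same strategy as the paper: induct on $k$ using Theorem~\ref{theorem4.2} as the base case, at each stage re-run the colimit construction from the proof of Theorem~\ref{theorem4.2} using the maps $\theta_{\zeta_{m_j}}$ for $j\geq k+1$ to carve out $C^{m_k}$, and then pass to the infinite tensor product by noting that $(D_{k+1})_n=0$ once $m_{k+1}>n$. The only cosmetic difference is that you run the construction intrinsically on $D_k$ via $\theta^{D_k}_\zeta=\pi_k\circ\theta_\zeta\circ\iota_k$, whereas the paper works entirely inside $T$ by pre-composing every map with $T\twoheadrightarrow D^k\hookrightarrow T$; you already anticipate this in your final paragraph, and indeed that is exactly how the paper handles the possible issue of whether \cite[Theorem~4.5 and Lemma~5.3]{SW1} apply to $D_k$.
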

\begin{proof}
We are going to show by induction that there exists natural coalgebra retracts $C^{m_i}$ of $T$, for $0\leq i\leq k$, with a natural coalgebra decomposition
\begin{equation}\label{equation4.6}
T(V)\cong \left( \bigotimes_{i=0}^k C^{m_i}(V)\right)\otimes D^k(V)
\end{equation}
for some natural coalgebra retract $D^k$ of $T$ such that
$$
P_nC^{m_i}=\left\{
\begin{array}{ccc}
P_nT &\textrm{ if } & n=m_ip^r \textrm{ for some } r\geq0,\\
0&&\textrm{ otherwise }.\\
\end{array}\right.
$$
for $0\leq i\leq k$. The statement holds for $k=0$ by Theorem~\ref{theorem4.2}, where $C^{m_0}$ is the natural coalgebra retract $C$ of $T$ given in Theorem~\ref{theorem4.2}. Suppose that the statement holds for $k$. Following the lines in the proof of Theorem~\ref{theorem4.2} with using $\{\theta_{\zeta_{m_i}}\}$ for $i\geq k+2$, there is a natural coalgebra decomposition
$$
D^k(V)\cong C^{m_{k+1}}(V)\otimes D^{k+1}(V).
$$
In brief, the first construction of $E^{m_{k+1}}(1)=\colim_gT$ is the colimit of the map $g$ given by the composite
$$
T\rOnto D^k \rInto T\rTo^{\theta_{\zeta_{m_{k+2}}}}T
$$
and the inductive construction follows the lines in the proof of Theorem~\ref{theorem4.2} with pre-composing $T\twoheadrightarrow D^k\hookrightarrow T$. This gives a monotone decreasing sequence of natural coalgebra retracts $C^{m_{k+1}}(i)$ of $D^k$ for $i=1,2,\ldots$ and  the resulting natural coalgebra retract $C^{m_{k+1}}=\bigcap\limits_{i=1}^\infty C^{m_{k+1}}(i)$ of $D^k$ has the property in primitives that $P_nC^{m_{k+1}}=0$ if $n$ is divisible by one of $m_{k+2},m_{k+3},\ldots$ and $$P_nC^{m_{k+1}}=P_nD^k$$ if $n$ is not divisible by any of $m_{i}$ with $i\geq k+2$. Together with the fact that $P_nD=0$ if $n=m_ip^r$ for some $0\leq i\leq k$ and $r\geq0$ and $P_nD=P_nT$ otherwise, we have $P_nC^{m_{k+1}}=P_nT$ if $n=m_{k+1}p^r$ for some $r\geq 0$ and $P_nC^{m_{k+1}}=0$ otherwise. The induction is finished.

Now decomposition~\ref{equation4.6} induces a commutative diagram
\begin{diagram}
T\cong \left(\bigotimes_{i=0}^k C^{m_i}\right)\otimes D^k\cong \left(\bigotimes_{i=0}^{k+1} C^{m_i}\right)\otimes D^{k+1} &\rOnto^{q_{k+1}}_{\mathrm{proj.}}& \bigotimes_{i=0}^{k+1} C^{m_i}\\
&\rdOnto^{q_k}&\dOnto>{\mathrm{proj.}}\\
&& \bigotimes_{i=0}^k C^{m_i}\\
\end{diagram}
that induces a coalgebra natural transformation
$$
T\rTo^q \bigotimes_{i=0}^\infty C^{m_i}
$$
which is an isomorphism because, for each $n$,  $D^k_n=0$ for sufficiently large $k>>0$. We finish the proof.
\end{proof}

\section{Proof of Theorem~\ref{theorem1.1}}\label{section5}

In this section, the ground ring is a field $\bfk$ of characteristic $p$.

\begin{lem}\label{lemma5.1}
Let $Q$ be a $T_n$-projective sub functor of $L^{\res}_n$. Then $Q$ is a subfunctor of $L_n$ and the sub
Hopf algebra $T(Q(V))$ of $T(V)$ generated by $Q(V)$ is a natural
coalgebra retract of $T(V)$.
\end{lem}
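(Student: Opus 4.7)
The lemma has two parts: (a) $Q \subseteq L_n$, and (b) $T(Q(V))$ is a natural coalgebra retract of $T(V)$.

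For (a), the plan is to apply the exact functor $\gamma_n$ of Proposition~\ref{proposition2.5} to the short exact sequence of subfunctors of $T_n$,
$$L_n \rInto L_n^{\res} \rOnto L_n^{\res}/L_n.$$
The goal is to argue $\gamma_n(L_n^{\res}) = \Lie(n)$, i.e.\ that every multilinear restricted-Lie element of length $n$ is already a Lie element. The only new contribution in $L_n^{\res}\setminus L_n$ comes from $p$-th powers; the multilinear part of $(x_1+\cdots+x_p)^p$ is the norm element $N = \sum_{\sigma\in\Sigma_p}x_{\sigma(1)}\cdots x_{\sigma(p)}$, which one exhibits explicitly as a Lie bracket over $\bfk$ of characteristic $p$ (as Example~\ref{example2.8} does for $p=2$). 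Higher cases $n = mp^k$ follow by iterating this step on $k$ using the Frobenius-twist structure of $L_n^{\res}/L_n$. Given $\gamma_n(L_n^{\res}) = \Lie(n)$, Proposition~\ref{proposition2.10}(1) yields $\gamma_n(Q)\subseteq\Lie(n)$ projective together with a natural isomorphism $\gamma_n(Q)\otimes_{\bfk(\Sigma_n)}V^{\otimes n}\cong Q(V)$, and formula~(\ref{equation3.1}) translates this to $Q(V)\subseteq L_n(V)$.

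For (b), since $Q(V)\subseteq L_n(V)$ consists of primitives of $T(V)$, the universal property of the tensor algebra together with the Shirshov--Witt theorem produces a natural injection of Hopf algebras $T(Q(V))\hookrightarrow T(V)$. To construct the coalgebra retraction, take the retraction $r\colon T_n\to Q$ guaranteed by $T_n$-projectivity with its section $s\colon Q\hookrightarrow T_n$, and let $E\colon T\to T$ be the natural endomorphism acting as $s\circ r$ on $T_n$, as the identity on $T_0$, and as zero on $T_m$ for $m\not\in\{0,n\}$. By the grading-orthogonality of $T$ (formula~(\ref{equation2.2}) together with Proposition~\ref{proposition2.3}), $E$ commutes with the comultiplication $\psi$, so it is a natural coalgebra endomorphism. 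One then mimics the colimit construction in the proof of Theorem~\ref{theorem4.2} by iterating suitable convolution modifications of $E$ with the antipode $\chi$, producing a natural coalgebra idempotent of $T$ whose image is a sub coalgebra retract $C$ containing $T(Q)$. A primitives comparison (using part (a) and the minimality/maximality arguments familiar from Selick--Wu) then forces $C = T(Q)$.

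The main obstacle is this last step: the naive block-wise projection $V^{\otimes nk}\to Q(V)^{\otimes k}$, obtained by applying $r$ to each consecutive $n$-block, is \emph{not} a coalgebra map, because the shuffle coproduct on $T(V)$ produces crossings between blocks that are absent from the shuffle coproduct on $T(Q(V))$. Overcoming this requires either a careful colimit/convolution construction along the lines of Theorem~\ref{theorem4.2}, or an appeal to the general coalgebra-decomposition machinery of~\cite{SW1} guaranteeing that natural coalgebra retracts built from $T_n$-projective primitive subfunctors automatically upgrade to full sub-Hopf coalgebra retracts.
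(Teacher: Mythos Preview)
Your argument for part~(a) is essentially the paper's: one checks $\gamma_n(L_n^{\res})=\gamma_n(L_n)=\Lie(n)$ and then invokes Proposition~\ref{proposition2.10}(1) together with the commutative square comparing $\Phi^Q_V$ with $\Phi^{L_n^{\res}}_V$ to force $Q(V)\subseteq L_n(V)$. This is fine.

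Part~(b), however, has a real gap. The claim that your endomorphism $E$ (identity on $T_0$, $s\circ r$ on $T_n$, zero elsewhere) is a coalgebra map is simply false for $n\geq 2$: take any $x\in T_n(V)$ and look at the $T_1\otimes T_{n-1}$ component of $\psi(x)$. Applying $E\otimes E$ kills it, whereas $\psi(E(x))=\psi(s(r(x)))$ still has a generically nonzero $T_1\otimes T_{n-1}$ component. The grading-orthogonality formula~(\ref{equation2.2}) tells you only that natural transformations $T_n\to T_m$ vanish for $n\neq m$; it says nothing about compatibility of a graded projection with the (highly non-homogeneous) comultiplication. Since $E$ is not a coalgebra map, the subsequent colimit construction never gets off the ground, and your concluding paragraph effectively concedes that no actual retraction has been produced.

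The paper's route is entirely different and avoids this obstacle. It uses the \emph{algebraic James--Hopf map}
\[
H_n\colon T(V)\longrightarrow T(V^{\otimes n}),
\]
which \emph{is} a natural coalgebra map (it arises by taking homology of the geometric James--Hopf map). One then composes with $T(r_V)\colon T(V^{\otimes n})\to T(Q(V))$, a Hopf map since $r_V$ is linear. The crucial input is the commutativity of
\[
\begin{diagram}
T(Q(V))&\rInto& T(L_n(V))&\rInto& T(V)\\
&\rdEq&&\rdTo>{T(j_V)}&\dTo>{H_n}\\
&&T(Q(V))&\lTo^{T(r_V)} &T(V^{\otimes n}),\\
\end{diagram}
\]
where the right triangle commutes by~\cite[Theorem~1.1]{Wu1}: restricted to the sub Hopf algebra $T(L_n(V))$, the James--Hopf map $H_n$ agrees with $T(j_V)$ for $j_V\colon L_n(V)\hookrightarrow V^{\otimes n}$. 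This immediately gives the natural coalgebra retraction $T(r_V)\circ H_n$. The James--Hopf map is precisely the device that solves the ``block-crossing'' problem you identified; no colimit or convolution gymnastics are needed.
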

\begin{proof}
It is easy to see that $\gamma_n(L^{\res}_n)=\gamma_n(L_n)=\Lie(n)$. By Proposition~\ref{proposition3.1}, the image of the natural transformation
$$
\Phi^{L^{\res}_n}_V\colon \gamma_n(L^{\res}_n)\otimes_{\bfk(\Sigma_n)}V^{\otimes n}\longrightarrow L^{\res}_n(V)
$$
is $L_n(V)$. Since $Q$ is $T_n$-projective,
$$
\Phi^Q_V\colon \gamma_n(Q)\otimes_{\bfk(\Sigma_n)}V^{\otimes n}\longrightarrow Q(V)
$$
is an isomorphism by Proposition~\ref{proposition2.10} (1). From the commutative diagram
\begin{diagram}
\gamma_n(Q)\otimes_{\bfk(\Sigma_n)}V^{\otimes n}&\rTo& \gamma_n(L^{\res}_n)\otimes_{\bfk(\Sigma_n)}V^{\otimes n}\\
\cong\dTo>{\Phi^Q_V}&&\dTo>{\Phi^{L^{\res}_n}_V}\\
Q(V)&\rInto& L^{\res}_n(V),\\
\end{diagram}
we have $Q\subseteq L_n$. By Proposition~\ref{proposition2.10}(4), there is a natural linear transformation
$$
r_V\colon T_n(V)=V^{\otimes
n}\rTo Q(V)
$$
with $r_V|_{Q(V)}=\id_{Q(V)}$. Let
$$
H_n\colon T(V)\rTo T(V^{\otimes n})
$$
be the algebraic James-Hopf map induced by the geometric James-Hopf
map by taking homology. Then there is a commutative diagram
\begin{diagram}
T(Q(V))&\rInto& T(L_n(V))&\rInto& T(V)\\
&\rdEq&&\rdTo>{T(j_V)}&\dTo>{H_n}\\
&&T(Q(V))&\lTo^{T(r_V)} &T(V^{\otimes n}),\\
\end{diagram}
where the maps in the top row are the inclusions of sub Hopf
algebras, $j_V$ is the canonical inclusion and the right triangle
commutes by the geometric theorem in~\cite[Theorem 1.1]{Wu1}. Thus
the sub Hopf algebra $T(Q(V))$ of $T(V)$ admits a natural coalgebra
retraction and hence the result.
\end{proof}

A natural sub Hopf algebra $B(V)$ of $T(V)$ is called
\textit{coalgebra-split} if the inclusion $B(V)\to T(V)$ admits a
natural coalgebra retraction. For a Hopf algebra $A$, denote by $QA$
the set of indecomposable elements of $A$. Let $IA$ be the
augmentation ideal of $A$. If $B(V)$ is a natural sub Hopf algebra
of $T(V)$, then there is a natural epimorphism $IB(V)\to QB(V)$. Let
$Q_nB(V)$ be the quotient of $B_n(V)=IB(V)\cap T_n(V)$ in $QB(V)$.

\begin{thm}\label{theorem5.2}
Let $B(V)$ be a natural sub Hopf algebra of $T(V)$. Then the
following statements are equivalent to each other:
\begin{enumerate}
\item[1)] $B(V)$ is a natural coalgebra-split sub Hopf algebra of $T(V)$.
\item[2)] There is a natural linear transformation $r\colon T(V)\to
B(V)$ such that $r|_{B(V)}$ is the identity.
\item[3)] Each $Q_nB$ is naturally equivalent to a $T_n$-projective sub functor of $L_n$.
\item[4)] Each $Q_nB$ is a $T_n$-projective functor.
\end{enumerate}
\end{thm}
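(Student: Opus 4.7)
The plan is to establish the cyclic chain $(1)\Rightarrow(2)\Rightarrow(4)\Rightarrow(3)\Rightarrow(1)$. The implication $(1)\Rightarrow(2)$ is immediate, since any natural coalgebra retraction is in particular a natural linear retraction. For $(2)\Rightarrow(4)$, I begin from the linear retraction $r\colon T\to B$. By Proposition~\ref{proposition2.3}, $B$ is a graded subfunctor $B=\bigoplus_n B_n$ with $B_n\subseteq T_n$, and the orthogonality $\Hom(T_n,T_m)=0$ from equation~(\ref{equation2.2}) forces $r$ to split as a direct sum of natural retractions $r_n\colon T_n\to B_n$; hence each $B_n$ is $T_n$-projective. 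To upgrade this to $Q_nB$, I use two structural facts: the subalgebra $B(V)$ of the tensor algebra $T(V)$ is itself free, so $B(V)\cong T(W(V))$ as associative algebras with $W\cong QB$ as graded functors; and, since $B$ is a sub Hopf algebra of the primitively generated Hopf algebra $T$, the generators $W$ may be chosen inside the primitives, so that $W_n\subseteq PB\cap T_n\subseteq L^{\res}_n(V)$. The natural identification $W_n\cong Q_nB$ inside $B_n$ provides a natural linear splitting of $B_n\twoheadrightarrow Q_nB$, exhibiting $Q_nB$ as a natural summand of the $T_n$-projective functor $B_n$; hence $Q_nB$ is itself $T_n$-projective.

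For $(4)\Rightarrow(3)$, the identification from the previous step makes $Q_nB\cong W_n$ a natural subfunctor of $L^{\res}_n$; combined with hypothesis (4), the functor $W_n$ is a $T_n$-projective subfunctor of $L^{\res}_n$. The first assertion of Lemma~\ref{lemma5.1} then says any such $W_n$ is automatically contained in $L_n$. Thus $Q_nB$ is naturally equivalent to a $T_n$-projective subfunctor of $L_n$, which is condition (3).

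For $(3)\Rightarrow(1)$, I apply Lemma~\ref{lemma5.1} degree by degree. For each $n$, choose $W_n\subseteq L_n$ a $T_n$-projective subfunctor equivalent to $Q_nB$, and let $\rho_n\colon T\twoheadrightarrow T(W_n)$ be the natural coalgebra retraction supplied by Lemma~\ref{lemma5.1}. To combine these into a single natural coalgebra retraction onto $B\cong T(\bigoplus_n W_n)$, I mimic the iterative colimit procedure used in the proofs of Theorems~\ref{theorem4.2} and~\ref{theorem4.3}: iterating the composites $T\twoheadrightarrow T(W_n)\hookrightarrow T$ across all degrees and passing to the appropriate colimit yields a natural coalgebra idempotent on $T$ whose image is the sub Hopf algebra generated by $\bigoplus_n W_n$, which is canonically identified with $B$.

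The main obstacle lies in the identification of $Q_nB$ as a natural subfunctor of $L^{\res}$ used in both $(2)\Rightarrow(4)$ and $(4)\Rightarrow(3)$: this rests on the primitive generation of sub Hopf algebras of the tensor algebra together with the freeness of subalgebras of tensor algebras, and the naturality in $V$ must be extracted carefully since a priori a choice of free generators inside the primitives is involved. A secondary technical subtlety is verifying in $(3)\Rightarrow(1)$ that the iterated composites stabilize degree-wise to a retraction precisely onto $B$, rather than onto some larger sub Hopf algebra.
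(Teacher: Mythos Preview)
Your cycle $(1)\Rightarrow(2)\Rightarrow(4)\Rightarrow(3)\Rightarrow(1)$ differs from the paper's, which runs $(1)\Leftrightarrow(2)$, $(2)\Rightarrow(3)\Rightarrow(4)\Rightarrow(2)$, with the implications $(2)\Rightarrow(1)$ and $(2)\Rightarrow(3)$ imported from~\cite{SW1}. The only implication the paper argues in detail is $(4)\Rightarrow(2)$, and this direction is chosen precisely to avoid the obstacle you flag.

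The genuine gap is in your $(2)\Rightarrow(4)$. Knowing that each $B_n$ is $T_n$-projective does \emph{not} by itself give a natural splitting of $B_n\twoheadrightarrow Q_nB$: for that you need either $Q_nB$ projective or the decomposables $D_n\subseteq B_n$ injective in the relevant category, and neither is available without already knowing $(4)$. Your appeal to ``the subalgebra $B(V)$ of $T(V)$ is free, so $B(V)\cong T(W(V))$ with $W\cong QB$'' is a pointwise statement; promoting it to a natural isomorphism of functors is exactly the content that requires justification, and you correctly identify this as the main obstacle without resolving it. The paper sidesteps this entirely by reversing the arrow: assuming $(4)$, the $T_n$-projectivity of $Q_nB$ together with Proposition~\ref{proposition2.10}(4) furnishes the natural cross-section $s_n\colon Q_nB\hookrightarrow P_nB$ directly, after which $B\cong T\bigl(\bigoplus_k Q_kB\bigr)$ naturally and each $B_q$ decomposes as a direct sum of tensor products of the $Q_kB$'s, yielding the natural linear retraction $(2)$.

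Your $(4)\Rightarrow(3)$ is essentially fine once $(4)$ is in hand (Proposition~\ref{proposition2.10}(4) gives the section into $P_nB\subseteq L^{\res}_n$, then Lemma~\ref{lemma5.1} lands it in $L_n$), and this is close in spirit to what the paper uses. But your $(3)\Rightarrow(1)$ via iterated colimits is only a sketch: you would need to argue that the colimit stabilizes onto $B$ itself rather than some other sub Hopf algebra with the same indecomposables. The paper avoids this step as well, going $(3)\Rightarrow(4)\Rightarrow(2)\Rightarrow(1)$ instead. If you want a self-contained cycle avoiding the citations to~\cite{SW1}, one viable repair of $(2)\Rightarrow(4)$ is an induction on $n$: once $Q_kB$ is $T_k$-projective for $k<n$, the decomposables $D_n$ can be identified with the degree-$n$ piece of $T\bigl(\bigoplus_{k<n}Q_kB\bigr)$, which is $T_n$-projective, and then $D_n\hookrightarrow B_n$ splits by injectivity.
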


\begin{proof}
$(1)\Longrightarrow (2)$ and $(3)\Longrightarrow (4)$ are obvious. By~\cite[Theorem 8.6]{SW1},
$(2)\Longrightarrow (1)$. Thus $(1)\Longleftrightarrow (2)$. From
the proof of~\cite[Theorem 8.8]{SW1}, $(2)\Longrightarrow (3)$.

$(4)\Longrightarrow (2)$. Since $B(V)$ is a sub Hopf algebra of
primitively generated Hopf algebra $T(V)$, $B(V)$ is primitively
generated and so
$$
r_n\colon P_nB(V)=B(V)\cap L^{\res}_n(V)\rTo Q_nB(V)
$$
is a natural epimorphism, where $L^{\res}(V)=PT(V)$ is the free
restricted Lie algebra generated by $V$. Since $Q_nB$ is $T_n$-projective, the map
$r_n$ admits a natural cross-section $s_n\colon Q_nB(V)\rInto
P_nB(V)$ by Proposition~\ref{proposition2.10} (4).

Now we show that the inclusion $B(V)\to T(V)$ admits a natural
linear retraction. By identifying $Q_nB(V)$ with $s_n(Q_nB(V))$, we
have
$$
B(V)=T\left(\bigoplus_{k=1}^\infty Q_kB(V)\right)\subseteq T(V).
$$
Since each $Q_kB$ is a retract of the functor $T_k$,
$$
Q_{i_1}B\otimes\cdots\otimes Q_{i_t}B
$$
is a retract of $T_{i_1+i_2+\cdots+i_t}$ for any sequence
$(i_1,\ldots,i_t)$.
Note that $\{Q_iB(V) \ | \ i\geq 1 \}$ are algebraically
independent. Thus the summation
$$
\sum_{i_1+i_2+\cdots+i_t=q} Q_{i_1}B(V)\otimes
Q_{i_2}B(V)\otimes\cdots \otimes Q_{i_t}B(V)\subseteq T_q(V)= V^{\otimes q}.
$$
is a direct sum. From the fact that
$$\bigoplus_{i_1+i_2+\cdots+i_t=q} Q_{i_1}B\otimes
Q_{i_2}B\otimes\cdots \otimes Q_{i_t}B$$
is $T_n$-projective, there is
natural linear retraction
$$
V^{\otimes q}\rTo \bigoplus_{i_1+i_2+\cdots+i_t=q}
Q_{i_1}B(V)\otimes Q_{i_2}B(V)\otimes\cdots \otimes Q_{i_t}B(V)
$$
for any $q\geq 1$. Hence the inclusion $B(V)\to T(V)$ admits a
natural linear retraction.
\end{proof}

\begin{proof}[Proof of Theorem~\ref{theorem1.1}]
Let $B(V)$ be the sub Hopf algebra of $T(V)$
generated by
$$
L_{m_ip^r}(V)\quad \textrm{ for } i\in I,\ r\geq 0.
$$
Let $\{n_j\}_{j\geq 1}=\{m_ip^r\ | \ i\geq 1,\ r\geq0\}$ with $$n_1=m_1<n_2<\cdots.$$ Namely we rewrite the integers $m_ip^r$ in order. Let $B[k](V)$ be the sub Hopf algebra of $V$ generated by $L_{n_j}(V)$ for $1\leq j\leq k$. By Theorem~\ref{theorem5.2}, it suffices to show that $Q_nB$ is $T_n$-projective for $n\geq 1$. Let $n$ be a fixed positive integer. Choose $k$ such that $n_k\geq n$. Then the inclusion $B[k]\hookrightarrow B$ induces an isomorphism
$$
Q_nB[k]\cong Q_nB
$$
because $B[k]$ and $B$ has the same set of generators in tensor length $\leq n$. Thus it suffices to show the following statement:
\begin{enumerate}
\item[] \textit{For each $k\geq 1$, $B[k]$ is coalgebra-split.}
\end{enumerate}

The proof of this statement is given by induction on $k$. The statement holds for $k=1$ by Lemma~\ref{lemma5.1} because $L_{m_1}$ is $T_{m_1}$-projective by~\cite[Corollary 6.7]{SW1} from the assumption that $m_i$ is prime to~$p$. Suppose that $B[k-1]$ is coalgebra-split. Thus there is a coalgebra natural transformation $r\colon T\to B[k-1]$ such that $r|_{B[k-1]}$ is the identity map. Let $n_k=m_ip^r$ for some $i$ and $r$. Let $C^{m_i}$ be the natural coalgebra retract in Theorem~\ref{theorem4.3} with a natural coalgebra retraction $r_C\colon T\to C^{m_i}$. Define $f\colon T\to T$ to be the composite
\begin{equation}\label{equation5.1}
T\rTo^{r_C} C^{m_i}\rInto T\rTo^r B[k-1]\rInto T.
\end{equation}
Let $\tilde E=\colim_f T$ be the colimit with the canonical map
$$
q\colon T\longrightarrow \tilde E.
$$
As in lines of the proof of Theorem~\ref{theorem4.2}, there exists a coalgebra subfunctor $\tilde C$ of $C^{m_i}$ such that
$$
q|_{\tilde C}\colon \tilde C\longrightarrow \tilde E
$$
is an isomorphism by~\cite[Theorem 4.5]{SW1} with a natural coalgebra decomposition
\begin{equation}\label{equation5.2}
C^{m_i}\cong \tilde C\otimes \tilde D.
\end{equation}
According to~\cite[Lemma 5.3]{SW1}, the subfunctor $\tilde D$ of $C^{m_i}$ can be chosen as the cotensor product
$\bfk\Box_{\tilde E}C^{m_i}$ under the coalgebra map
$$
q|_{C^{m_i}}\colon C^{m_i}\longrightarrow \tilde E
$$
and so there is a left exact sequence
\begin{equation}\label{equation5.3}
P_n\tilde D\rInto P_nC^{m_i} \rTo^{P_nq|_{C^{m_i}}} \tilde E.
\end{equation}
 for any $n$.

By restricting the map $f$ as the composite in equation~(\ref{equation5.1}) to the primitives, we have the map
$$
P_nf\colon  P_nT\rTo^{P_nr_C} P_nC^{m_i}\rInto P_nT\rTo^{P_nr} P_nB[k-1]\rInto P_nT.
$$
If $n\not=m_ip^t$ for $t\geq 0$, then $P_nf=0$ because $P_nC^{m_i}=0$. If $n=m_ip^t$ for some $t\geq 0$ with $n<n_k$, then $P_nf$ is the identity map because $P_nC^{m_i}=P_nT$ and $P_nB[k-1]=P_nT=L^{\res}_n$ as the sub Hopf algebra $B[k-1]$ contains $L_{m_ip^s}$ for $s\geq0$. Thus
$$
P_n\tilde C=P_nC^{m_i}
$$
for $n<n_k$. From decomposition~(\ref{equation5.2}), we have
\begin{equation}\label{equation5.4}
P_nC^{m_i}=P_n\tilde C\oplus P_n\tilde D
\end{equation}
for all $n$ and so $P_n\tilde D=0$ for $n<n_k$. It follows that $\tilde D_n=0$ for $0<n<n_k$ and
\begin{equation}\label{equation5.5}
\tilde D_{n_k}=P_{n_k}\tilde D.
\end{equation}
Now consider the case $P_nf$ for $n=n_k=m_ip^r$. Since $P_nC^{m_i}=P_nT$, $P_nr_C=\id$ and so $P_nf\circ P_nf=P_nf$ with
$$
P_nf(\alpha)=P_nr(\alpha)
$$
for $\alpha\in P_nT$. Thus the composite
$$
\mathrm{Im}(P_nf)= P_nB[k-1]\rTo^{P_nq|_{B[k-1]}} P_n\tilde E=\colim_{P_nf}P_nT
$$
is an isomorphism. From exact sequence~(\ref{equation5.3}), we have
$$
P_n\tilde D=\Ker(P_nr \colon P_nC^{m_i}=P_nT\to P_nB[k-1]).
$$
Let $j\colon P_nB[k-1]\hookrightarrow P_nC^{m_i}=P_nT$ be the inclusion. From the commutative diagram
\begin{diagram}
& & P_n\tilde D=\Ker(P_nr)& &\\
& &\dInto&\rdTo^{\cong}&\\
P_n\tilde C&\rInto^i& P_nC^{m_i}&\rOnto&\Coker(i)\\
&\rdTo^{\cong}&\dOnto>{P_nr}&&\\
&&P_nB[k-1],&&\\
\end{diagram}
the summation
$P_nB[k-1]+ P_n\tilde D $
in $P_nT=P_nC^{m_i}$ is a direct sum with the decomposition
$$
P_nC^{m_i}=P_nT=P_nB[k-1]\oplus P_n\tilde D.
$$
From definition of $B[k]$, $PB[k](V)$ is the restricted sub Lie algebra of $L^{\res}(V)=PT(V)$ generated by $L_{n_i}(V)$ for $1\leq i\leq k$. It follows that $P_nB[k]=L_n^{\res}=P_nT=P_nC^{m_i}$ and
$$
Q_nB[k]\cong P_nB[k]/P_nB[k-1]=P_nC^{m_i}/P_nB[k-1]\cong P_n\tilde D.
$$
From decomposition~(\ref{equation5.2}), $\tilde D_n$ is a natural summand of $C^{min}_n$. Since $C^{min}$ is a coalgebra retract of $T$, $C^{\min}_n$ is a natural summand of $T_n$. Thus $\tilde D_n$ is $T_n$-projective. By identity~(\ref{equation5.5}), $P_n\tilde D$ is $T_n$-projective. Thus $Q_nB[k]$ is $T_n$-projective. By Theorem~\ref{theorem5.2}, $B[k]$ is coalgebra-split. The induction is finished and hence the result.
\end{proof}

By inspecting the proof, we have the following slightly stronger statement.

\begin{thm}\label{theorem5.3}
Let $\mathcal{M}=\{m_i\}_{i\in I}$ be finite or infinite set of positive integers prime to $p$ with each $m_i>1$. Let $f\colon I\to \{0,1,2,\ldots\}\cup\{\infty\}$ be a function. Then the sub Hopf algebra $B^{\mathcal{M},f}(V)$ of $T(V)$
generated by
$$
L_{m_ip^r}(V)\quad \textrm{ for } i\in I,\  0\leq r< f(i)
$$
is natural coalgebra-split. \hfill $\Box$
\end{thm}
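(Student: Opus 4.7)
My plan is to mimic the proof of Theorem~\ref{theorem1.1} essentially verbatim, tracking through the induction to verify that the truncation $r<f(i)$ introduces no new obstruction. Enumerate the set $\{\,m_ip^r : i\in I,\ 0\le r<f(i)\,\}$---which consists of pairwise distinct positive integers by unique factorization, since the $m_i$ are distinct and prime to $p$---in strictly increasing order as $n_1<n_2<\cdots$. Define $B[k](V)$ to be the sub Hopf algebra of $T(V)$ generated by $L_{n_j}(V)$ for $1\le j\le k$. For any fixed $n$, once $n_k\ge n$ one has $Q_nB^{\mathcal{M},f}\cong Q_nB[k]$ because $B^{\mathcal{M},f}$ and $B[k]$ share the same generators in tensor lengths $\le n$. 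By Theorem~\ref{theorem5.2} it suffices to prove that each $B[k]$ is natural coalgebra-split, which I shall do by induction on $k$.

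The base case $k=1$ follows from Lemma~\ref{lemma5.1} together with~\cite[Corollary~6.7]{SW1}, since $n_1=m_i$ is prime to $p$, making $L_{n_1}$ a $T_{n_1}$-projective subfunctor of $L^{\res}_{n_1}$. For the inductive step, assume $B[k-1]$ is coalgebra-split with natural coalgebra retraction $r\colon T\to B[k-1]$, and write $n_k=m_ip^a$ with $0\le a<f(i)$. Let $r_C\colon T\to C^{m_i}$ be the natural coalgebra retraction provided by Theorem~\ref{theorem4.3}. Form the composite of equation~(\ref{equation5.1}), pass to its colimit $\tilde E$, and extract via~\cite[Theorem~4.5]{SW1} and~\cite[Lemma~5.3]{SW1} a sub coalgebra functor $\tilde C$ of $C^{m_i}$ together with a natural coalgebra decomposition $C^{m_i}\cong\tilde C\otimes\tilde D$.

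From this point the argument of Theorem~\ref{theorem1.1} proceeds verbatim, once one verifies the primitive-level identity on which it rests: the induced self-map on $P_nT$ is the identity whenever $n=m_ip^s$ with $0\le s<a$. This reduces to $P_nB[k-1]=L^{\res}_n=P_nT$ for such $n$, which follows by induction on $s$ from the containment $L_{m_ip^{s'}}\subseteq B[k-1]$ for $0\le s'\le s$ together with the identity $L^{\res}_{m_ip^{s'}}=L_{m_ip^{s'}}+\xi(L^{\res}_{m_ip^{s'-1}})$, where $\xi$ denotes the $p$-th power map. The containment is the precise point at which the truncation $r<f(i)$ enters: since $s'\le s<a<f(i)$, the integer $m_ip^{s'}$ is an admissible generator index, and since $m_ip^{s'}<n_k$ it appears among $n_1,\dots,n_{k-1}$, so $L_{m_ip^{s'}}$ is indeed a generator of $B[k-1]$. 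The same computations as in Theorem~\ref{theorem1.1} then yield $Q_{n_k}B[k]\cong P_{n_k}\tilde D$ as a $T_{n_k}$-projective functor, closing the induction. I do not foresee any real obstacle; the only subtlety is bookkeeping, and since the induction step only ever references $L_{m_ip^{s'}}$ with $s'<a<f(i)$, the truncation by $f$ never removes a generator that the argument actually needs.
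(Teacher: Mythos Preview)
Your proposal is correct and matches the paper's approach exactly: the paper does not give a separate proof of Theorem~\ref{theorem5.3} but merely states ``By inspecting the proof, we have the following slightly stronger statement,'' and you have carried out precisely that inspection, correctly isolating the one place the truncation $r<f(i)$ could matter (the containment $L_{m_ip^{s'}}\subseteq B[k-1]$ for $s'<a$) and verifying it causes no obstruction since $s'<a<f(i)$.
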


\section{Decompositions of Lie Powers}\label{section6}

Let $m=kp^r$ with $k\not\equiv 0\mod{p}$ and $k>1$. According to~\cite[Theorem 10.7]{SW1}, the functor $L_{kp^r}$ admits the following functorial decomposition:
$$
L_{kp^r}=L'_{kp^r}\oplus L_p(L'_{kp^{r-1}})\oplus\cdots\oplus L_{p^r}(L'_k)
$$
for each $r\geq 0$ starting with $L'_k=L_k$, where each $L'_{kp^r}$ is a summand of $T_{kp^r}$ which is called $T_{kp^r}$-projective in our terminology. By evaluating on $V$, one gets a decomposition of the $\bfk(\GL(V))$-module $L_{kp^r}(V)$ given ~\cite[Theorem 4.4]{Bryant-Schocker} using a different approach from representation theory, where $L'_{kp^r}(V)$ was denoted by $B_{kp^r}$ in~\cite{Bryant-Schocker}. From Theorem~\ref{theorem5.3}, we can obtain various new decompositions of $L_{kp^r}$ and so the decompositions of the $\bfk(\GL(V))$-module $L_{kp^r}(V)$ by evaluating on $V$.

Let $\mathcal{M}=\{m_i\}_{i\in I}$ be finite or infinite set of positive integers prime to $p$ with each $m_i>1$. Let $f\colon I\to \{0,1,2,\ldots\}\cup\{\infty\}$ be a function. Let $B^{\mathcal{M},f}(V)$ be the sub Hopf algebra of $T(V)$
generated by
$$
L_{m_ip^r}(V)\quad \textrm{ for } i\in I,\  0\leq r< f(i).
$$
According to Theorem~\ref{theorem5.3}, $B^{\mathcal{M},f}$ is coalgebra-split and so
$Q_nB^{\mathcal{M},f}$ is $T_n$-projective by Theorem~\ref{theorem5.2}. Since $B^{\mathcal{M},f}(V)$ is sub Hopf algebra of the primitively generated Hopf algebra $T(V)$, it is primitively generated by~\cite[Proposition 6.13]{MM} and so there is a natural epimorphism
$$
\phi_n\colon P_nB^{\mathcal{M},f}\twoheadrightarrow Q_nB^{\mathcal{M},f}.
$$
From Proposition~\ref{proposition2.10}(4), the map $\phi_n$ admits a natural cross-section because $Q_nB^{\mathcal{M},f}$ is $T_n$-projective. Thus there is a subfunctor $D^{\mathcal{M},f}_{n}$ of $P_nB^{\mathcal{M},f}$ such that
$$\phi_n|\colon D^{\mathcal{M},f}_n\to Q_nB^{\mathcal{M},f}$$ is a natural isomorphism. Since $D^{\mathcal{M},f}_n$ is $T_n$-projective,
$$
D^{\mathcal{M},f}_n\subseteq P_nB^{\mathcal{M},f}\cap L_n
$$
from the lines in the proof of Lemma~\ref{lemma5.1}. Thus $D^{\mathcal{M},f}_n$ is a $T_n$-projective subfunctor of $L_n$. From the fact that $D^{\mathcal{M},f}\cong Q_nB^{\mathcal{M},f}$ and $B^{\mathcal{M},f}$ is isomorphic to the tensor algebra generated by $Q_nB^{\mathcal{M},f}$ with $n\geq 1$, the inclusion
$$
\bigoplus_{n=1}^\infty D_n\rInto B^{\mathcal{M},f}
$$
induces a natural isomorphism
\begin{equation}\label{equation6.1}
T\left(\bigoplus_{n=1}^\infty D_n\right)\cong B^{\mathcal{M},f}.
\end{equation}
Since the algebra $B^{\mathcal{M},f}$ is generated by $L_{m_ip^r}(V)$ for $m_i\in\mathcal{M}$ and $0\leq r< f(i)$, we have
\begin{equation}\label{equation6.2}
D^{\mathcal{M},f}_n=0\textrm{ if } n\not=m_ip^r \textrm{ for  some } m_i\in \mathcal{M}\textrm{ and some } 0\leq r< f(i).
\end{equation}
Let $\{m_ip^r \ | \ m_i\in\mathcal{M}, 0\leq r< f(i)\}=\{n_1,n_2,\ldots\}$ with $n_1<n_2<\cdots$ and let $\alpha$ be the cardinality of the set $\{m_ip^r \ | \ m_i\in\mathcal{M}, 0\leq r< f(i)\}$. Then decomposition~(\ref{equation6.1}) becomes
\begin{equation}\label{equation6.3}
T\left(\bigoplus_{i=1}^\alpha D_{n_i}\right)\cong B^{\mathcal{M},f}.
\end{equation}
and so a natural isomorphism
\begin{equation}\label{equation6.4}
PT\left(\bigoplus_{i=1}^\alpha D_{n_i}\right)=L^{\res}\left(\bigoplus_{i=1}^\alpha D_{n_i}\right)\cong PB^{\mathcal{M},f}=B^{\mathcal{M},f}\cap L^{\res}.
\end{equation}
According to Proposition~\ref{proposition4.1}, the sub Hopf algebra $B^{\mathcal{M},f}$ is also a natural coalgebra retract of $T$ if we change the ground ring $R$ to be $p$-local integers. Notice that $PT=L$ and $PB^{\mathcal{M},f}=B^{\mathcal{M},f}\cap L$ when $R=\Z_{(p)}$. By changing the ground ring back to $\Z/p$ and then extending it to $\bfk$, we have the following decomposition:
\begin{equation}\label{equation6.5}
L\left(\bigoplus_{i=1}^\alpha D_{n_i}\right)\cong B^{\mathcal{M},f}\cap L.
\end{equation}

For applying the Hilton-Milnor Theorem to determine $B^{\mathcal{M},f}\cap L_n$, let us recall the terminology of \textit{basic product} from~\cite[p.512]{Whitehead}. Let $x_1,\ldots,x_k$ be letters. A monomial means a formal product $w=x_{i_1}x_{i_2}\cdots x_{i_n}$ with $1\leq i_1,\ldots,i_t\leq k$, where the word length $n$ is called the \textit{weight} of $w$. We define the \textit{basic products} of weight $n$, by induction on $n$; and for each such product, a non-negative integer $r(w)$, called its \textit{rank}. These are to be linearly ordered, in such a way that $w_1<w_2$ if the weight of $w_1$ is less than the weight of $w_2$. The \textit{serial number $s(w)$} is the number of basic products $\leq w$ in term of this ordering. The basic products of weight $1$ are the letters $x_1,\ldots,x_k$ with the order $x_1<x_2<\cdots<x_k$. The rank $r(x_i)=0$ and the serial number $s(x_i)=i$. Suppose that the basic products of weight less than $n$ have been defined and linearly ordered in such a way that $w_1<w_2$ if the weight of $w_1$ is less than that of $w_2$; and suppose that the rank $r(w)$ of such a product has been defined. Then the basic products of weight $n$ are all monomials of the $w_1w_2$ of weight $n$, for which $w_1$ and $w_2$ are basic products, $w_2<w_1$ and $r(w_1)\leq s(w_2)$. Give these an arbitrary linear order, and define $r(w_1w_2)=s(w_2)$.

Let $\mathcal{W}_k$ be the set of all basic products on the letters $x_1,\ldots,x_k$ by forgetting the ordering. Then
$$
\mathcal{W}_k\subseteq\mathcal{W}_{k+1}
$$
for each $k$. Let
$$
\mathcal{W}_{\infty}=\bigcup_{k=1}^\infty \mathcal{W}_k.
$$
The elements in $\mathcal{W}$ are called basic products on the sequence of the letters $x_i$ for $i\geq 1$. For each basic product $w=x_{i_1}\cdots x_{i_t}\in \mathcal{W}_{\alpha}$, define
\begin{equation}\label{equation6.6}
w(D^{\mathcal{M},f})=D^{\mathcal{M},f}_{n_{i_1}}\otimes\cdots\otimes D^{\mathcal{M},f}_{n_{i_t}}
\end{equation}
with the \textit{tensor length with respect to $D^{\mathcal{M},f}$}
$$
d(w)=n_{i_1}+n_{i_2}+\cdots+n_{i_t}
$$
and the natural transformation
$$
\phi_w\colon w(D^{\mathcal{M},f})(V)=\longrightarrow T\left(\bigoplus_{i=1}^\alpha D_{n_i}(V)\right)\cong B^{\mathcal{M},f}(V)
$$
given by
$$
\phi_w(z_1\otimes z_2\otimes\cdots\otimes z_t)=[[z_1,z_2,\ldots,z_t]
$$
for $z_j\in D^{\mathcal{M},f}_{n_{i_j}}(V)$. Then the map $\phi_w$ extends uniquely to a natural transformation of Hopf algebras
$$
T\phi_w\colon T(w(D^{\mathcal{M},f}))\longrightarrow T\left(\bigoplus_{i=1}^\alpha D_{n_i}(V)\right)\cong B^{\mathcal{M},f}(V)
$$
by the universal property of tensor algebras. Now by taking the homology from the Hilton-Milnor Theorem~\cite[Theorem 6.7]{Whitehead}, we have the natural isomorphism of coalgebras
\begin{equation}\label{equation6.7}
\theta\colon \bigotimes_{w}T(w(D^{\mathcal{M},f}))\rTo^{\cong} T\left(\bigoplus_{i=1}^\alpha D_{n_i}\right)\cong B^{\mathcal{M},f},
\end{equation}
where $w$ runs over all basic products in $\mathcal{W}_{\alpha}$, the tensor product is linearly ordered and the natural transformation $\theta$ is given by the ordered product of $T\phi_w$ which is well-defined because the tensor length $d(w)$ tends to $\infty$ as the weight of $w$ tends to $\infty$. By restricting to Lie powers, we have the decomposition
\begin{equation}\label{equation6.8}
\theta|\colon \bigotimes_{w}L(w(D^{\mathcal{M},f}))\rTo^{\cong} L\left(\bigoplus_{i=1}^\alpha D_{n_i}\right)\cong B^{\mathcal{M},f}\cap L.
\end{equation}
By taking tensor length, we obtain the following decomposition theorem.

\begin{thm}\label{theorem6.1}
Let $\mathcal{M}=\{m_i\}_{i\in I}$ be finite or infinite set of positive integers prime to $p$ with each $m_i>1$ and let $f\colon I\to \{0,1,2,\ldots\}\cup\{\infty\}$ be a function. Then there exists $T_{m_ip^r}$-projective subfunctor $D^{\mathcal{M},f}_{m_ip^r}$ of $L_{m_ip^r}$ for each $m_i\in\mathcal{M}$ and $0\leq r< f(i)$ such that
$$
L_{m_ip^r}=\bigoplus_{d(w)|m_ip^r}L_{m_ip^r/d(w)}(w(D^{\mathcal{M},f}))
$$
for $m_i\in\mathcal{M}$ and $0\leq r< f(i)$, where $w$ runs over basic products with $d(w)|m_ip^r$.\hfill $\Box$.
\end{thm}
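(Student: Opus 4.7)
The proof will be a synthesis of the constructions already carried out in the paragraphs (6.1)--(6.8) preceding the statement. The functors $D^{\mathcal{M},f}_{m_ip^r}$ are already in hand: Theorem~\ref{theorem5.3} gives that $B^{\mathcal{M},f}$ is natural coalgebra-split in $T$, so by Theorem~\ref{theorem5.2} each $Q_nB^{\mathcal{M},f}$ is $T_n$-projective. Since $B^{\mathcal{M},f}$ is a sub Hopf algebra of the primitively generated Hopf algebra $T$, it is primitively generated, and Proposition~\ref{proposition2.10}(4) supplies a natural cross-section of the epimorphism $P_nB^{\mathcal{M},f}\twoheadrightarrow Q_nB^{\mathcal{M},f}$. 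The image $D_n^{\mathcal{M},f}$ is a $T_n$-projective subfunctor of $L_n$ (the inclusion into $L_n$ rather than merely $L_n^{\res}$ follows as in the proof of Lemma~\ref{lemma5.1}), and the generation of $B^{\mathcal{M},f}$ forces $D_n^{\mathcal{M},f}=0$ whenever $n$ is not of the form $m_ip^r$ with $m_i\in\mathcal{M}$ and $0\le r<f(i)$.

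With the functors $D_{n_i}^{\mathcal{M},f}$ in place, the first main ingredient is to convert the Hopf algebra identification (\ref{equation6.3}) into the Lie algebra identification (\ref{equation6.5}). In characteristic $p$ the primitives of a tensor algebra are the free \emph{restricted} Lie algebra, so to land in $L$ one applies Proposition~\ref{proposition4.1} and lifts the coalgebra split from $\bfk$ through $\Z/p$ to $\Z_{(p)}$, where $PT=L$; taking primitives and then returning to $\bfk$ yields the natural isomorphism $L(\bigoplus_i D_{n_i}^{\mathcal{M},f})\cong B^{\mathcal{M},f}\cap L$.

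The second main ingredient is the algebraic Hilton--Milnor decomposition, recorded as (\ref{equation6.7}) and (\ref{equation6.8}). Composing with the previous step gives
\[
\bigoplus_{w}L\bigl(w(D^{\mathcal{M},f})\bigr)\;\cong\;L\Bigl(\bigoplus_i D_{n_i}^{\mathcal{M},f}\Bigr)\;\cong\;B^{\mathcal{M},f}\cap L,
\]
where $w$ ranges over the basic products on the letters $\{x_i\}$. Because each $D_{n_i}^{\mathcal{M},f}$ is a subfunctor of $L_{n_i}\subseteq T_{n_i}$, the summand $w(D^{\mathcal{M},f})$ is concentrated in tensor length $d(w)=n_{i_1}+\cdots+n_{i_t}$, so $L_k(w(D^{\mathcal{M},f}))$ sits in tensor length $k\,d(w)$. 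Taking the homogeneous component of tensor length $N$ on the left therefore extracts precisely $\bigoplus_{d(w)\mid N}L_{N/d(w)}(w(D^{\mathcal{M},f}))$.

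To finish, specialize $N=m_ip^r$ with $m_i\in\mathcal{M}$ and $0\le r<f(i)$. On the right-hand side, $B^{\mathcal{M},f}$ is by construction generated by $L_{m_ip^r}$ for exactly these indices, so $L_{m_ip^r}\subseteq B^{\mathcal{M},f}$ and hence $B^{\mathcal{M},f}\cap L_{m_ip^r}=L_{m_ip^r}$, yielding the asserted decomposition. The one point that warrants genuine attention is verifying compatibility of all of these identifications with the tensor-length grading so that the degree-$N$ slice picks out the correct set of basic products; this hinges on the fact that $D_{n_i}^{\mathcal{M},f}$ is a \emph{graded} subfunctor of $T$ (which is automatic by Proposition~\ref{proposition2.3}), and that the Hilton--Milnor isomorphism is built from iterated commutators, which are strictly tensor-length additive. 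The restricted-versus-ordinary Lie algebra issue in positive characteristic, the other potential obstacle, is absorbed once and for all by the ground-ring lift of the previous paragraph.
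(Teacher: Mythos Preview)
Your proposal is correct and follows essentially the same route as the paper: the theorem is really a corollary of the constructions laid out in (\ref{equation6.1})--(\ref{equation6.8}), and you have reproduced those steps faithfully---the $T_n$-projectivity of $Q_nB^{\mathcal{M},f}$ via Theorems~\ref{theorem5.3} and~\ref{theorem5.2}, the cross-section from Proposition~\ref{proposition2.10}(4), the ground-ring lift through $\Z_{(p)}$ to pass from $L^{\res}$ to $L$, the Hilton--Milnor decomposition, and finally slicing by tensor length. Your explicit remark that $L_{m_ip^r}\subseteq B^{\mathcal{M},f}$ forces $B^{\mathcal{M},f}\cap L_{m_ip^r}=L_{m_ip^r}$ is exactly the point the paper leaves implicit in the phrase ``by taking tensor length.''
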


\noindent\textbf{Note.} Since each $D_{n_i}$ is $T_{n_i}$-projective, the tensor product $w(D^{\mathcal{M},f})$ is $T_{d(w)}$-projective. If $m_ip^r/d(w)$ is prime to $p$, then the Lie power $L_{m_ip^r/d(w)}(w(D^{\mathcal{M},f}))$ is $T_{m_ip^r}$-projective. Thus the non $T_{m_ip^r}$-projective summands of $L_{m_ip^r}$ occurs in the those factors $L_{m_ip^r/d(w)}(w(D^{\mathcal{M},f}))$ with $m_ip^r/d(w)\equiv0\mod{p}$.

Another remark is that the multiplicity of each factor $L_{m_ip^r/d(w)}(w(D^{\mathcal{M},f}))$ can be determined as follows: Let $w$ be a basic product involving the letters $x_{j_1},\ldots,x_{j_k}$ such that $x_{j_i}$ occurs $l_i$ times and $d(w)|m_ip^r$. According to~\cite[(6.4), p.514]{Whitehead}, the multiplicity of the factor $L_{m_ip^r/d(w)}(w(D^{\mathcal{M},f}))$ is given by the formula
$$
\frac{1}{l}\sum_{d|l_0}\mu(d)\frac{\left(\frac{l}{d}\right)!}{\left(\frac{l_1}{d}\right)!\cdots \left(\frac{l_k}{d}\right)!},
$$
where $\mu$ is the M\"obius function, $l_0$ is the greatest common divisor of $l_1,\ldots,l_k$ and $l=l_1+\cdots +l_k$.

\begin{example}{\rm
Let $\bfk$ be of characteristic $2$. Let $\mathcal{M}=\{m_1=3\}$ and let $f(1)=3$. Then we have natural coalgebra-split sub Hopf
algebra
$$
B^{\mathcal{M},f}(V)=\la L_3(V), L_6(V),L_{12}(V),\ra
$$
of $T(V)$ with $D^{\mathcal{M},f}_3=L_3$, $D^{\mathcal{M},f}_6\cong L'_6=L_6/L_2(L_3)$ and
$$
D^{\mathcal{M},f}_{12}\cong L_{12}/\left([L'_6,L'_6]\oplus [[L'_6,L_3],L_3]\oplus
L_4(L_3)\right)
$$
From Theorem~\ref{theorem6.1}, we have the decomposition
$$
\begin{array}{rcl}
L_{12}&=&D_{12}\oplus L(D_3\oplus D_6)\cap L_{12}\\
&=&D_{12}\oplus L_4(D_3)\oplus L_2(D_6)\oplus [[D_6, D_3],D_3]\\
&\cong& D_{12}\oplus L_4(L_3)\oplus L_2(L'_6)\oplus [[L'_6,L_3],L_3].\\
\end{array}
$$
By comparing with~\cite[Theorem 10.7]{SW1} or~\cite[Theorem 4.4]{Bryant-Schocker}, the $T_{12}$-projective summand
$$[[L'_6,L_3],L_3]\cong L'_6\otimes L_3\otimes L_3$$ can be recognized in our decomposition for $L_{12}$. \hfill $\Box$ }
\end{example}

Let $\mathcal{M}$ be the set of all positive integers $m_i$ with $m_i$ prime to $p$ and $m_i>1$ and let $f(i)=\infty$ for all $i$. Then the set
$$
\{m_ip^r\ | \ m_i\in \mathcal{M}\ r\geq 0\}=\mathbb{N}\smallsetminus \{1,p,p^2,p^3,\ldots\}.
$$
Let
$$
\bar D_n=D^{\mathcal{M},f}_n
$$
for $n$ not a power of $p$. As a special case of Theorem~\ref{theorem6.1}, we have the following:

\begin{cor}\label{corollary6.3}
There exists $T_n$-projective subfunctor $\bar D_n$ of $L_n$ for each $n$ not a power of $p$ such that
$$
L_{m}=\bigoplus_{d(w)|m}L_{m/d(w)}(w(\bar D))
$$
for any $m$ not a power of $p$, where  $w$ runs over basic products with $d(w)|m$.\hfill $\Box$.
\end{cor}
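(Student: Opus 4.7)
The plan is to obtain Corollary~\ref{corollary6.3} as a direct specialization of Theorem~\ref{theorem6.1}. Concretely, I would take $\mathcal{M} = \{ m \in \mathbb{N} : m > 1,\ \gcd(m,p) = 1 \}$ (indexed by itself, so $I = \mathcal{M}$) and $f(i) = \infty$ for every $i \in I$. Under this choice the index set
\[
\{ m_i p^r : m_i \in \mathcal{M},\ 0 \leq r < f(i) \} = \{ m p^r : m > 1,\ \gcd(m,p)=1,\ r \geq 0 \}
\]
is exactly $\mathbb{N} \setminus \{1, p, p^2, p^3, \ldots\}$, i.e.\ the set of positive integers that are not a power of $p$ (and $>1$). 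Define $\bar D_n := D^{\mathcal{M},f}_n$ for each such~$n$; this is a $T_n$-projective subfunctor of $L_n$ by the construction in the paragraph preceding Theorem~\ref{theorem6.1} (coming ultimately from Theorem~\ref{theorem5.3} together with Theorem~\ref{theorem5.2} and Proposition~\ref{proposition2.10}(4)).

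Next, I would simply read off the decomposition formula. Theorem~\ref{theorem6.1}, applied to this $(\mathcal{M},f)$, gives for each $n = m_i p^r$ with $m_i \in \mathcal{M}$ and $0 \leq r < f(i)$ the natural decomposition
\[
L_n = \bigoplus_{d(w) \mid n} L_{n/d(w)}\bigl( w(\bar D) \bigr),
\]
where $w$ ranges over basic products in the letters $x_j$ (indexed by our reindexing of $\{m_ip^r\}$) with $d(w) \mid n$. Since the allowed values of $n$ are precisely those positive integers that are not a power of $p$ (and automatically $n \geq m_1 \geq 2$), this is exactly the statement of Corollary~\ref{corollary6.3}.

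There is essentially no obstacle: the corollary is a renaming of the conclusion of Theorem~\ref{theorem6.1} under the maximal admissible choice of $(\mathcal{M}, f)$. The only thing worth double-checking is the bookkeeping, namely that with $f \equiv \infty$ the subfunctors $D^{\mathcal{M},f}_n$ are produced for every $n$ not a power of $p$ (and not just up to some finite truncation), and that the Hilton--Milnor style expansion in~(6.8) converges termwise (i.e.\ in each fixed tensor length only finitely many basic products $w$ contribute, because $d(w) \to \infty$ with the weight of $w$). Both points are immediate from the constructions in the proof of Theorem~\ref{theorem6.1}, so the corollary follows.
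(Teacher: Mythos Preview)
Your proposal is correct and matches the paper's own derivation exactly: the paper also specializes Theorem~\ref{theorem6.1} to $\mathcal{M}=\{m>1:\gcd(m,p)=1\}$ with $f\equiv\infty$, sets $\bar D_n=D^{\mathcal{M},f}_n$, and reads off the decomposition. Your extra remarks on the bookkeeping (that $\bar D_n$ is defined for all $n$ not a power of $p$, and that only finitely many $w$ contribute in each tensor length) are accurate and in line with the paper's setup.
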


\end{document}